\newtheorem{theorem}{Theorem}[section]
\newtheorem{corollary}[theorem]{Corollary}
\newtheorem{lemma}[theorem]{Lemma}
\newtheorem{proposition}[theorem]{Proposition}
\theoremstyle{definition}
\newtheorem{definition}[theorem]{Definition}
\newtheorem{remark}[theorem]{Remark}
\newtheorem{notation}[theorem]{Notation}
\newtheorem{example}[theorem]{Example}
\newtheorem{question}[theorem]{Question}
\def\Map{{\rm Map}}
\def\BAD{{\rm BAD}}
\def\GOOD{{\mathcal{G}}}
\def\Vert{{\rm Vert}}
\def\N{{\mathbb N}}
\def\cc{{\curvearrowright}}
\def\R{{\mathbb R}}
\def\Z{{\mathbb Z}}
\newcommand{\cA}{{\mathcal A}}
\newcommand{\cB}{{\mathcal B}}
\newcommand{\cG}{{\mathcal G}}
\newcommand{\cH}{{\mathcal H}}
\newcommand{\cC}{{\mathcal C}}
\newcommand{\cN}{{\mathcal N}}
\newcommand{\cT}{{\mathcal T}}
\newcommand{\cF}{{\mathcal F}}
\newcommand{\Cb}{{\mathbb C}}
\newcommand{\Zb}{{\mathbb Z}}
\newcommand{\Rb}{{\mathbb R}}
\newcommand{\Nb}{{\mathbb N}}
\newcommand{\tr}{{\rm tr}}
\newcommand{\ddet}{{\rm det}}
\newcommand{\Fix}{{\rm Fix}}
\newcommand{\fe}{{\mathfrak e}}
\newcommand{\T}{{\mathbb T}}
\newcommand{\C}{{\mathbb C}}
\newcommand{\vs}{{\vec{s}}}
\newcommand{\Ends}{{\rm{Ends}}}
\begin{document}

\title{Harmonic models and spanning forests of residually finite groups}
\author{Lewis Bowen}
\author{Hanfeng Li}
\address{\hskip-\parindent
Lewis Bowen, Department of Mathematics, Texas A{\&}M University,
College Station, TX 77843-3368, U.S.A.}
\email{lpbowen@math.tamu.edu}

\address{\hskip-\parindent
Hanfeng Li, Department of Mathematics, SUNY at Buffalo,
Buffalo, NY 14260-2900, U.S.A.}
\email{hfli@math.buffalo.edu}

\keywords{Harmonic model, algebraic dynamical system, Wired Spanning Forest, WSF, tree entropy}

\begin{abstract}
We prove a number of identities relating the sofic entropy of a certain class of non-expansive algebraic dynamical systems, the sofic entropy of the Wired Spanning Forest and the tree entropy of Cayley graphs of residually finite groups. We also show that homoclinic points and periodic points in harmonic models are dense under general conditions.
\end{abstract}
\date{August 13, 2011}

\maketitle

\tableofcontents

\section{Introduction}

This paper is concerned with two related dynamical systems. The quickest way to explain the connections is to start with finite graphs. So consider a finite connected simple graph $G=(V,E)$. The graph Laplacian $\Delta_G$ is an operator on $\ell^2(V,\R)$ given by $\Delta_G x(v) = \sum_{w: \{v,w\}\in E} (x(v)-x(w))$. Let $\det^*(\Delta_G)$ be the product of the non-zero eigenvalues of $\Delta_G$.  By the Matrix-Tree Theorem (see, e.g., \cite{GR} Lemma 13.2.4), $|V|^{-1}\det^*(\Delta_G)$, is the number of spanning trees in $G$. Recall that a subgraph $H$ of $G$ is {\em spanning} if it contains every vertex. It is a {\em forest} if it has no cycles. A connected forest is a {\em tree}. The number of spanning trees in $G$ is denoted $\tau(G)$.

There is another interpretation for this determinant. Consider the space $(\R/\Z)^V$ of all functions $x:V \to \R/\Z$. The operator $\Delta_G$ acts on this space as well by the same formula. An element $x\in (\R/\Z)^V$ is {\em harmonic mod $1$} if $\Delta_G x \in \Z^V$. The set of harmonic mod $1$ elements is an additive group $X_G < (\R/\Z)^V$ containing the constants. The number of connected components of this group is denoted $|X_G|$. Let $\Z^V_0$ be the set of integer-valued functions $x:V \to \Z$ with zero sum: $\sum_{v\in V} x(v)=0$. Because $\Delta_G$ maps $\Z_0^V$ injectively into itself, $\det^*(\Delta_G) = |V||X_G|$. To our knowledge, this was first observed in \cite{So98}. We provide more details in \S \ref{S-approximation}.

Our main results generalize the equalities $\det^*(\Delta_G)|V|^{-1} = \tau(G) = |X_G|$ to Cayley graphs of finitely generated residually finite groups.

\subsection{Harmonic models and other algebraic systems}
We begin with a discussion of the appropriate analogue of the group of harmonic mod $1$ points and, to provide further context, the more general setting of group actions by automorphisms of a compact group. This classical subject has long been studied when the acting group is $\Z$ or $\Z^d$ \cite{Schmidt} though not as much is known in the general case.

Let $\Gamma$ be a countable group and $f$ be an element in the integral group ring $\Z\Gamma$. The action of $\Gamma$ on the discrete abelian group $\Z\Gamma/\Z\Gamma f$ induces an action by automorphisms on the Pontryagin dual $X_f:=\widehat{\Z\Gamma/\Z\Gamma f}$, the compact abelian group of all homomorphisms from $\Z\Gamma/\Z\Gamma f$ to $\T$, the unit circle in $\C$. Call the latter action $\alpha_f$. The topological entropy and measure-theoretic entropy (with respect to the Haar probability measure) coincide when $\Gamma$ is amenable \cite{Berg, De06}. Denote this number by $h(\alpha_f)$.

\begin{example} If $S \subset \Gamma \setminus \{e\}$ is a finite symmetric generating set, where $e$ denotes the identity element of $\Gamma$, and $f$ is defined by $f_s=-1$ if $s\in S$, $f_e=|S|$ and $f_s=0$ for $s \notin S \cup \{e\}$ then $X_f$ is canonically identified with the group $\{x \in (\R/\Z)^\Gamma:~ \sum_{s\in S} x_{ts} = |S|x_t, \forall t\in \Gamma\}$ of harmonic mod $1$ points of the associated Cayley graph.
\end{example}

Yuzvinskii proved \cite{Yu65, Yu67} that if $\Gamma=\Z$ then the entropy of $\alpha_f$ is calculable as follows. When $f=0$, $h(\alpha_f)=+\infty$. When $f\neq 0$, write $f$ as a Laurent polynomial $f = u^{-k}\sum_{j=0}^n c_j u^j$ by identifying $1 \in \Z=\Gamma$ with the indeterminate $u$ and requiring that $c_nc_0\ne 0, n\ge 0$. If $\lambda_1,\ldots, \lambda_n$ are the roots of $\sum_{j=0}^n c_j u^j$, then
$$h(\alpha_f) = \log |c_n|  + \sum_{j=1}^n \log^+ |\lambda_j|$$
where $\log^+ t = \log \max(1,t)$. More generally, Yuzvinskii developed formulas for the entropy for any endomorphism of a compact metrizable group \cite{Yu67}.

When $\Gamma=\Z^d$, we identify $\Z\Gamma$ with the Laurent polynomial ring $\Z[u_1^{\pm 1},\ldots, u_d^{\pm1}]$. Given a nonzero Laurent polynomial $f \in \Z\Gamma$, the Mahler measure of $f$ is defined by
$$\mathbb{M}(f)=\exp \left(\int_{\T^d} |f(s)|~ds\right)$$
where the integral is with respect to the Haar probability measure on the torus $\T^d$. In \cite{LSW90} it is shown that $h(\alpha_f) = \log \mathbb{M}(f)$. This is a key part of a more general procedure for computing the entropy of any action of $\Z^d$ by automorphisms of a compact metrizable group.

In \cite{FK}, Fuglede and Kadison introduced a determinant $\det_A f$ for elements $f$ of a von Neumann algebra $A$ with respect to a normal tracial state $\tr_A$. It has found widespread application in the study of $L^2$-invariants \cite{Luck}. We will apply it to the special case when $A$ is the group von Neumann algebra $\cN\Gamma$ with respect to its natural trace $\tr_{\cN\Gamma}$. Note that  $\Z\Gamma$ is a sub-ring of $\cN\Gamma$. These concepts are reviewed in \S \ref{S-notation}.


As explained in \cite[Example 3.13]{Luck}, if $\Gamma = \Z^d$ and $f \in \Z\Gamma$ is nonzero, then the Mahler measure of $f$ equals the logarithm of its Fuglede-Kadison determinant. So it was natural to wonder whether the equation $h(\alpha_f) = \log \det_{\cN\Gamma} f$ holds whenever $\Gamma$ is amenable and $f$ is invertible in $\cN\Gamma$. Some special cases were proven in \cite{De06} and \cite{DS} before the general case was solved in the affirmative in \cite{Li}.

Recall that $\Gamma$ is {\em residually finite} if the intersection of all finite-index subgroups of $\Gamma$ is the trivial subgroup. In this case, there exists a sequence of finite-index normal subgroups $\Gamma_n\lhd \Gamma$ such that $\bigcap_{n=1}^\infty \bigcup_{i\ge n} \Gamma_i = \{e\}$. Our main results concern residually finite groups; we do not, in general, require that $\Gamma$ is amenable.

A group $\Gamma$ is {\em sofic} if it admits a sequence of partial actions on finite sets which, asymptotically, are free actions. This large class of groups, introduced implicitly in \cite{Gr99} and developed further in \cite{We00, ES05, ES06}, contains all residually finite groups and all amenable groups. It is not known whether all countable groups are sofic.

Entropy was introduced in \cite{Ko58,Ko59} for actions of $\Z$. The definition and major results were later extended to all countable amenable groups \cite{Ki75,Ol85, OW87}. Until recently it appeared to many observers that entropy theory could not be extended beyond amenable groups. This changed when \cite{Bo10a} introduced an entropy invariant for actions of free groups. Soon afterwards, \cite{Bo10} introduced entropy for probability-measure-preserving sofic group actions. One disadvantage of the approach taken in \cite{Bo10} (and in \cite{Bo10a}) is that it only applies to actions with a finite-entropy generating partition. This requirement was removed in \cite{KL11a}. That paper also introduced topological sofic entropy for actions of sofic groups on compact metrizable spaces and proved a variational principle relating the two concepts analogous to the classical variational principle.

If $\Gamma$ is non-amenable then the definition of entropy of a $\Gamma$-action depends on a choice of sofic approximation. We will not need the full details here because we are only concerned with the special case in which $\Gamma$ is a residually finite group. A sequence $\Sigma=\{\Gamma_n\}^\infty_{n=1}$ of finite-index normal subgroups of $\Gamma$ satisfying $\bigcap_{n=1}^\infty \bigcup_{i\ge n} \Gamma_i = \{e\}$ determines, in a canonical manner, a sofic approximation to $\Gamma$. Thus, we let $h_{\Sigma,\lambda}(\alpha_f)$ and $h_\Sigma(\alpha_f)$ denote the measure-theoretic sofic entropy and the topological sofic entropy of $\alpha_f$ with respect to $\Sigma$ respectively. Here $\lambda$ denotes the Haar probability measure on $X_f$. The precise definition of sofic entropy is given in \S \ref{sec:sofic} below.

In \cite{Bo11}, it was proven that if $f \in \Z\Gamma$ is invertible in $\ell^1(\Gamma)$ then $h_{\Sigma,\lambda}(\alpha_f) = \log \det_{\cN\Gamma} f$ as expected. Also, if $f$ is invertible in the universal group $C^*$-algebra of $\Gamma$ then by \cite{KL11a}, $h_{\Sigma}(\alpha_f) = \log \det_{\cN\Gamma} f$.

\begin{definition}\label{defn:well-balanced}
We say that $f \in \R\Gamma$ is {\em well-balanced} if
\begin{enumerate}
\item $\sum_{s\in \Gamma}f_s=0$,

\item $f_s\le 0$ for every $s\in \Gamma\setminus \{e\}$,

\item $f=f^*$ (where $f^*$, the adjoint of $f$ is given by $f^*_s = f_{s^{-1}}$ for all $s\in \Gamma$).

\item the support of $f$ generates $\Gamma$.
\end{enumerate}
\end{definition}
If $f \in \Z\Gamma$ is well-balanced then the dynamical system $\Gamma \cc X_f$ is called a {\em harmonic model} because $X_f$ is naturally identified with the set of all $x \in (\R/\Z)^\Gamma$ such that $xf =0$, i.e., $x$ satisfies the harmonicity equation mod $1$:
$$\sum_{s\in \Gamma} x_{ts}f_s = 0 ~\mod \Z$$
for all $t\in \Gamma$.

If $\Gamma=\Z^d$ ($d\ge 2$), then much is known about the harmonic model: the entropy was computed in \cite{LSW90} in terms of Mahler measure, it follows from \cite[Theorem 7.2]{KS} that the periodic points are dense, \cite{SV} provides an explicit description of the homoclinic group in some cases, and \cite{LSV} contains a number of results on the homoclinic group, periodic points, the specification property and more in some cases. See also \cite{SV} for results relating the harmonic model to the abelian sandpile model.

Our first main result is:
\begin{theorem}\label{thm:main1}
Let $\Gamma$ be a countably infinite group and $\Sigma=\{\Gamma_n\}^\infty_{n=1}$ a sequence of finite-index normal subgroups of $\Gamma$ satisfying $\bigcap_{n=1}^\infty \bigcup_{i\ge n} \Gamma_i = \{e\}$. Let $f \in \Z\Gamma$ be well-balanced. Then
$$h_{\Sigma,\lambda}(\alpha_f) = h_\Sigma(\alpha_f) = \log \ddet_{\cN\Gamma} f = \lim_{n\to\infty} [\Gamma:\Gamma_n]^{-1} \log |\Fix_{\Gamma_n}(X_f)|$$
where $|\Fix_{\Gamma_n}(X_f)|$ is the number of connected components of the set of fixed points of $\Gamma_n$ in $X_f$.
\end{theorem}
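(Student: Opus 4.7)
The plan is to split the four equalities into (a) the identification $\lim_n[\Gamma:\Gamma_n]^{-1}\log|\Fix_{\Gamma_n}(X_f)|=\log\ddet_{\cN\Gamma}f$, (b) a lower bound on the sofic entropies by this periodic-point count, and (c) the matching upper bound on the topological sofic entropy.

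For (a), a $\Gamma_n$-fixed point of $X_f$ is exactly a function $x:\Gamma\to\R/\Z$ that is constant on each $\Gamma_n$-coset and satisfies $xf\equiv 0\bmod\Z$; equivalently, it is a harmonic-mod-$1$ element on the finite weighted Cayley graph $G_n$ with vertex set $\Gamma/\Gamma_n$ and adjacency induced by $f$. Since $f$ is well-balanced its support generates $\Gamma$, so $G_n$ is connected, and the correspondence $\ddet^*(\bar f_n)=[\Gamma:\Gamma_n]\cdot|X_{G_n}|$ recalled from the introduction (and justified in \S\ref{S-approximation}) gives $|\Fix_{\Gamma_n}(X_f)|=[\Gamma:\Gamma_n]^{-1}\ddet^*(\bar f_n)$, where $\bar f_n$ denotes the induced operator on $\ell^2(\Gamma/\Gamma_n)$. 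Dividing by $[\Gamma:\Gamma_n]$ and applying L\"uck's approximation theorem in the form extended to positive self-adjoint elements of $\Z\Gamma$ with possible zero eigenvalues gives
$$\lim_n [\Gamma:\Gamma_n]^{-1}\log\ddet^*(\bar f_n)=\log\ddet_{\cN\Gamma}f,$$
while $[\Gamma:\Gamma_n]^{-1}\log[\Gamma:\Gamma_n]\to 0$; as a side benefit this shows $\log\ddet_{\cN\Gamma}f>-\infty$.

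For (b), the canonical sofic approximation sends $\gamma\in\Gamma$ to its left-translation permutation on $V_n=\Gamma/\Gamma_n$, and each $x\in\Fix_{\Gamma_n}(X_f)$ produces an exactly equivariant microstate $\phi_x:V_n\to X_f$, $t\Gamma_n\mapsto t\cdot x$. Distinct fixed points give $\phi_x$ that disagree at the identity coset, so are $\epsilon$-separated in the natural dynamical pseudometric, yielding $h_\Sigma(\alpha_f)\ge\lim_n[\Gamma:\Gamma_n]^{-1}\log|\Fix_{\Gamma_n}(X_f)|$. Weak convergence of the uniform measure on $\Fix_{\Gamma_n}(X_f)$ to the Haar measure $\lambda$ (a consequence of density and equidistribution of periodic points in $X_f$, to be established elsewhere in the paper via Fourier analysis) upgrades this to $h_{\Sigma,\lambda}(\alpha_f)\ge\lim_n[\Gamma:\Gamma_n]^{-1}\log|\Fix_{\Gamma_n}(X_f)|$, and the variational principle from \cite{KL11a} supplies $h_{\Sigma,\lambda}(\alpha_f)\le h_\Sigma(\alpha_f)$.

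The remaining and main obstacle is (c), the bound $h_\Sigma(\alpha_f)\le\log\ddet_{\cN\Gamma}f$. The previously known formulas of \cite{Bo11,KL11a} do not apply since $f$ kills the constants and is not invertible in $\cN\Gamma$. My plan is a perturbation argument: for $\epsilon>0$ the element $f+\epsilon\delta_e$ is positive with spectrum in $[\epsilon,\|f\|+\epsilon]$, hence invertible in $\cN\Gamma$, and after rescaling by an integer a Neumann-series argument produces a comparable element of $\Z\Gamma$ invertible in $\ell^1(\Gamma)$ to which \cite{Bo11,KL11a} applies, giving the known formula for the perturbed system. A microstate-comparison argument, viewing approximate solutions of $xf\equiv 0$ as near-solutions of $x(f+\epsilon\delta_e)\equiv \epsilon x$ and tracking the error in the relevant covering-number metric, then yields $h_\Sigma(\alpha_f)\le\log\ddet_{\cN\Gamma}(f+\epsilon\delta_e)$. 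Letting $\epsilon\to 0^+$ and applying dominated convergence to $\int\log t\,d\mu_{f+\epsilon\delta_e}(t)$ (justified by $\log\ddet_{\cN\Gamma}f>-\infty$ from the first step) closes the chain. The delicate point is handling the kernel and near-kernel of $f$: the constants, and more subtly the near-harmonic modes corresponding to small positive eigenvalues, must be controlled uniformly as $\epsilon\to 0$ using the spectral estimates produced in step (a).
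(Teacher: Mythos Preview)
Your overall decomposition into (a), (b), (c) matches the paper's structure, and your treatment of (a) is essentially correct (the paper routes through tree entropy and Lyons' approximation theorem rather than invoking L\"uck directly, but these are equivalent here).

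For (b), two points deserve care. First, ``distinct fixed points give $\phi_x$ that disagree at the identity coset, so are $\epsilon$-separated'' is false as written: the identity component of $\Fix_{\Gamma_n}(X_f)$ consists of constants, and two distinct constants can be arbitrarily close. What is needed, and what the paper proves (Lemma~\ref{L-small suprenorm to constant}), is that representatives of distinct \emph{components} are uniformly separated in $\rho_\infty$. Second, the measure-theoretic lower bound requires more than weak convergence of $\lambda_n$ to $\lambda$: one needs that most periodic points give microstates in $\Map_\lambda$ (Lemma~\ref{lem:BAD1}), and that step uses ergodicity of $\lambda$, established via density of the homoclinic group (Theorem~\ref{T-dense homoclinic}). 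That theorem excludes $\Gamma$ virtually $\Z$ or $\Z^2$, so the paper handles those cases separately through amenability and the equality of classical topological and measure entropy (Lemma~\ref{L-amenable lower bound}); your sketch does not address this case split.

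The serious gap is in (c). The paper does \emph{not} perturb to $f+\epsilon\delta_e$ and compare with the known formula for invertible elements. Your proposed ``microstate-comparison argument'' is not fleshed out, and I do not see how to implement it: microstates for $X_f$ and for $X_{f+\epsilon}$ (or any $\Z\Gamma$-approximant such as $Nf+1$) live in different spaces, and there is no evident map or inequality relating their separated counts. Rewriting $xf\equiv 0$ as $x(f+\epsilon)\equiv \epsilon x$ is a tautology that bounds nothing, since the right side depends on the unknown $x$. Nor does the known entropy formula for $\alpha_{Nf+1}$ translate into a bound on $h_\Sigma(\alpha_f)$ without such a comparison.

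The paper's upper bound (Lemma~\ref{L-upper bound}) is instead a direct covering argument on microstates for $X_f$ itself. From $\phi\in\Map(W,\delta,\Gamma_n)$ one extracts $y_\phi\in(\R/\Z)^{\Gamma/\Gamma_n}$ with $\|y_\phi\,\pi_n(f)\|_{2,U}$ small, lifts to $\tilde y_\phi\in[-1/2,1/2)^{\Gamma/\Gamma_n}$, and writes $\tilde y_\phi\,\pi_n(f)=z_\phi+z'_\phi+(\text{small})$ with $z_\phi+z'_\phi$ an integer vector of zero sum. One solves $\tilde x_\phi\,\pi_n(f)=z_\phi+z'_\phi$ on the zero-sum subspace and observes that $\tilde y_\phi-\tilde x_\phi$ lies in a set $B_{n,\eta}$ of vectors with small image under $\pi_n(f)$ and bounded projection to $\ker\pi_n(f)$. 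The key technical step (Lemma~\ref{L-bound preimage}) bounds $N_\varepsilon(B_{n,\eta},\|\cdot\|_{2,U})$ by splitting the spectrum of $\pi_n(f)$ at a cutoff $\kappa$: eigenspaces above $\kappa$ are controlled by invertibility there, eigenspaces in $(0,\kappa]$ contribute a factor $\exp\bigl(-[\Gamma:\Gamma_n]\int_{0^+}^{\kappa}\log t\,d\mu_n(t)\bigr)$ via a volume comparison, and the kernel is handled by L\"uck's theorem, which gives $\dim\ker\pi_n(f)=o([\Gamma:\Gamma_n])$. Combined with the count of possible integer targets $z_\phi+z'_\phi$ modulo $\Z_0(\Gamma/\Gamma_n)\pi_n(f)$, which equals $\ddet(\pi_n(f)|_{\ell^2_0})$, and a weak-convergence estimate for spectral measures (Lemma~\ref{L-measure weak convergence}), one obtains the bound after letting $\kappa\to 0$. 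Thus the near-kernel control you correctly identify as the crux is carried out directly for $f$, not via a perturbed invertible element.
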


In the appendix, we show that if $f$ is well-balanced then it is not invertible in $\ell^1(\Gamma)$ or even in the universal group $C^*$-algebra of $\Gamma$. Moreover, it is invertible in $\cN\Gamma$ if and only if $\Gamma$ is non-amenable. Thus Theorem \ref{thm:main1} does not have much overlap with previous results.

In order to prove this theorem, we obtain several more results of independent interest. 
We show that  if $\Gamma$ is not virtually isomorphic to $\Z$ or $\Z^2$ then the homoclinic subgroup of $X_f$ is dense in $X_f$. 
This implies $\Gamma \cc X_f$ is mixing of all orders (with respect to the Haar probability measure). 
Also, $\Fix_{\Gamma_n}(X_f)$ converges to $X_f$ in the Hausdorff topology as $n\to\infty$.  
As far we know, these results were unknown except in the case $\Gamma=\Z^d$.   
Indeed, in the case $\Gamma=\Z^d$,  it follows from \cite[Theorem 7.2]{KS} that the periodic points are dense and 
it is known that the action is mixing of all orders if $d \ge 2$ (see Remark \ref{remark:homoclinic} for details). 

\subsection{Uniform Spanning Forests}
In this paper, all graphs are allowed to have multiple edges and loops. A subgraph of a graph  is  {\em spanning} if it contains every vertex. It is a {\em forest} if every connected component is simple connected (i.e., has no cycles). A connected forest is a {\em tree}. If $G$ is a finite connected graph then the {\em Uniform Spanning Tree} (UST) on $G$ is the random subgraph whose law is uniformly distributed on the collection of spanning trees. Motivated in part to develop an analogue of the UST for infinite graphs, R. Pemantle implicitly introduced in \cite{Pe91} the {\em Wired Spanning Forest} (WSF) on $\Z^d$. This model has been generalized to arbitrary locally finite graphs and studied intensively (see e.g., \cite{BLPS01}).

In order to define the WSF, let $G=(V,E)$ be a locally finite connected graph and $\{G_n\}_{n=1}^\infty$ an increasing sequence of finite subgraphs $G_n=(V_n,E_n) \subset G$ whose union is all of $G$. For each $n$, define the wired graph $G^w_n=(V^w_n,E^w_n)$ as follows. The vertex set $V^w_n= V_n \cup \{*\}$. The edge set $E_n^w$ of $G_n^w$ contains all edges in $G_n$. Also for every edge $e$ in $G$ with one endpoint $v$ in $G_n$ and the other endpoint $w$ not in $G_n$, let $e^*=\{v,*\}$. Then $G^w_n$ contains all edges of the form $e^*$. These are all of the edges of $G^w_n$. Let $\nu^w_n$ be the uniform probability measure on the set of spanning trees of $G^w_n$. We consider it as a probability measure on the set $2^{E_n^w}$ of all subsets of $E_n^w$. Because $E_n \subset E$, we can think of $2^{E_n}$ (which we identify with the set of all subsets of $E_n$) as a subset of $2^E$  (which we identify with the set of all subsets of $E$). The projection of $\nu^w_n$ to $2^{E_n} \subset 2^E$ converges (as $n\to\infty$) to a Borel probability measure $\nu_{WSF}$ on $2^E$ (in the weak* topology on the space of Borel probability measures of $2^E$). This measure does not depend on the choice of $\{G_n\}_{n=1}^\infty$. This was implicitly proven by Pemantle \cite{Pe91} (answering a question of R. Lyons). By definition, $\nu_{WSF}$ is the law of the Wired Spanning Forest on $G$. There is a related model, called the Free Spanning Forest (FSF) (obtained by using $G_n$ itself in place of $G^w_n$ above) which is frequently discussed in comparison with the WSF. However, we will not make any use of it here. For more background, the reader is referred to \cite{BLPS01}.

\begin{definition}\label{defn:Cayley}
Let $\Gamma$ be a countably infinite group and let $f \in \Z\Gamma$ be well-balanced (Definition~\ref{defn:well-balanced}). The Cayley graph $C(\Gamma,f)$ has vertex set  $\Gamma$. For each $v \in \Gamma$ and $s \ne e$, there are $|f_s|$  edges between $v$ and $vs$. Let $E(\Gamma,f)$ denote the set of edges of $C(\Gamma,f)$. Similarly, for $\Gamma_n \triangleleft \Gamma$, we let $C_n^f=C(\Gamma/\Gamma_n,f)$ be the graph with vertex set $\Gamma/\Gamma_n$ such that for each $g\Gamma_n \in \Gamma/\Gamma_n$ and $s \in \Gamma$ there are $|f_s|$ edges between $g\Gamma_n$ and $gs\Gamma_n$. We let $E_n^f$ be the set of edges of $C^f_n$.
\end{definition}


Let $\nu_{WSF}$ be the law of the WSF on $C(\Gamma,f)$.  It is a probability measure on $2^{E(\Gamma,f)}$. Of course, $\Gamma$ acts on $E(\Gamma,f)$ which induces an action on $2^{E(\Gamma,f)}$ which preserves $\nu_{WSF}$.

In \cite{Lyons05}, R. Lyons  introduced the {\em tree entropy} of a transitive weighted graph (and more generally, of a probability measure on weighted rooted graphs). In our case, the definition runs as follows. Let $\mu$ be the probability measure on $\Gamma$ defined by $\mu(s)=|f_s|/f_e$ for $s\in \Gamma \setminus \{e\}$. Then the tree entropy of $C(\Gamma,f)$ is
$${\bf h}(C(\Gamma, f)):=  \log f_e - \sum_{k\ge 1} k^{-1} \mu^k(e)$$
where $\mu^k$ denotes the $k$-fold convolution power of $\mu$. In probabilistic terms, $\mu^k(e)$ is the probability that a random walk with i.i.d. increments with law $\mu$ started from $e$ will return to $e$ at time $k$. In \cite{Lyons05}, R. Lyons proved that if $\Gamma$ is amenable then the measure-entropy of $\Gamma \cc (2^{E(\Gamma,f)}, \nu_{WSF})$ equals ${\bf h}(C(\Gamma,f))$. We extend this result to all finitely-generated residually finite groups (where entropy is taken with respect to a sequence $\Sigma$ of finite-index normal subgroups converging to the trivial subgroup).


In \cite[Theorem 3.1]{Lyons10}, it is shown that ${\bf h}(C(\Gamma,f)) = \log \det_{\cN\Gamma} f$. We give another proof in \S \ref{S-approximation}. Thus by Theorem \ref{thm:main1}, the entropy of the WSF equals the entropy of the associated harmonic model.

In the case $\Gamma=\Z^d$, the entropy of the harmonic model was computed in \cite{LSW90}. Then the topological entropy of the action of $\Z^d$ on the space of essential spanning forests was computed in \cite{BP93} and shown to coincide with the entropy of the harmonic model. This coincidence was mysterious until \cite{So98} explained how to derive this result without computing the entropy. 

We also study a topological model related to the WSF (though not the same model as in \cite{BP93}). To describe it, we need to introduce some notation.

\begin{notation}\label{note:S}
Let $S$ be the set of all $s\in \Gamma \setminus\{e\}$ with $f_s \ne 0$. Let $S_* \subset E(\Gamma,f)$ denote the set of edges adjacent to the identity element. Let $p:S_* \to S$ be the map which takes an edge with endpoints $\{e,g\}$ to $g$. Also, let $s_*^{-1}=s^{-1}s_* \in S_*$ where $p(s_*)=s$. Note that $p(s_*^{-1})= p(s_*)^{-1}$ and $(s_*^{-1})^{-1}=s_*$. 
\end{notation}


 An element $y\in S_*^\Gamma$ defines, for each $g\in \Gamma$ a directed edge in $C(\Gamma,f)$ away from $g$ (namely, the edge $gs$ where $y_g=s$).
Let $\cF_f \subset S_*^\Gamma$ be the set of all elements $y \in S_*^\Gamma$ such that
\begin{enumerate}
\item edges are oriented consistently: if $y_g=s_*$ and $p(s_*)=s$ then $y_{gs} \ne s_*^{-1}$.
\item there are no cycles: there does not exist $g_1,g_2,\ldots, g_n\in \Gamma$ and $s_1,\ldots, s_n \in S$ such that $g_is_i=g_{i+1}$, $g_1=g_n$ and $p(y_{g_i})=s_i$ for $1\le i \le n-1$.
\end{enumerate}
The space $\cF_f \subset S_*^\Gamma$ is closed (where $S_*^\Gamma$ is given the product topology) and is therefore a compact metrizable space. Let $h_\Sigma(\cF_f,\Gamma)$ denote the topological sofic entropy of the action $\Gamma \cc \cF_f$. Our second main result is:

\begin{theorem}\label{thm:WSF}
If $\Gamma$ is a countably infinite group, $\Sigma=\{\Gamma_n\}_{n=1}^\infty$ is a sequence of finite-index normal subgroups of $\Gamma$, $\bigcap_{n=1}^\infty \bigcup_{i \ge n} \Gamma_i = \{e\}$ and $f \in \Z\Gamma$ is well-balanced then
$$h_{\Sigma,\nu_{WSF}}(2^{E(\Gamma,f)},\Gamma) = h_{\Sigma}(\cF_f,\Gamma) = {\bf h}(C(\Gamma,f))=\log \ddet_{\cN\Gamma} f = \lim_{n\to\infty} [\Gamma:\Gamma_n]^{-1} \log \tau(C_n^f)$$
where $\tau(C_n^f)$ is the number of spanning trees of the Cayley graph $C(\Gamma/\Gamma_n,f)$.
\end{theorem}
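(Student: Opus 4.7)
The five equalities split into four sublinks. The identity $\log\ddet_{\cN\Gamma} f = \lim_n [\Gamma:\Gamma_n]^{-1}\log\tau(C_n^f)$ follows from Theorem \ref{thm:main1} once one identifies $|\Fix_{\Gamma_n}(X_f)|$ with $\tau(C_n^f)$: the $\Gamma_n$-fixed points of $X_f$ are canonically Pontryagin-dual to $\Z[\Gamma/\Gamma_n]/(\Z[\Gamma/\Gamma_n])f$ and thus form the group of harmonic-mod-$1$ functions on the finite Cayley graph $C_n^f$, whose number of connected components is $\tau(C_n^f)$ by the Matrix-Tree argument recalled in the introduction. The identity ${\bf h}(C(\Gamma,f)) = \log\ddet_{\cN\Gamma} f$ is \cite[Theorem 3.1]{Lyons10}, with an alternate proof promised in Section \ref{S-approximation}.

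For the measure-theoretic link $h_{\Sigma,\nu_{WSF}}(2^{E(\Gamma,f)},\Gamma) = \lim_n [\Gamma:\Gamma_n]^{-1}\log\tau(C_n^f)$, the plan is to count sofic microstates for the action $\Gamma \cc (2^{E(\Gamma,f)},\nu_{WSF})$ with respect to the sofic approximation coming from $\{\Gamma/\Gamma_n\}$. Since $\bigcap_n\bigcup_{i\ge n}\Gamma_i = \{e\}$, the quotient graphs $C_n^f$ Benjamini-Schramm converge to $C(\Gamma,f)$; combined with Pemantle's characterization of the WSF, this yields that the uniform probability measure on the $\tau(C_n^f)$ spanning trees of $C_n^f$, lifted $\Gamma_n$-periodically to $2^{E(\Gamma,f)}$, converges weak-$*$ to $\nu_{WSF}$. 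Each such periodic configuration is a genuine $\Gamma_n$-invariant microstate, supplying the lower bound. For the upper bound, any configuration whose empirical distribution approximates $\nu_{WSF}$ must, outside a small exceptional set of vertices, agree locally with a spanning tree of $C_n^f$, confining the total microstate count to $\tau(C_n^f)$ up to a subexponential factor.

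For the topological link $h_\Sigma(\cF_f,\Gamma) = \lim_n [\Gamma:\Gamma_n]^{-1}\log\tau(C_n^f)$, a parallel count applies. An element of $\cF_f$ is an oriented spanning out-forest of $C(\Gamma,f)$ in which every vertex has a unique outgoing edge and no cycles, so each tree-component has exactly one end. For the lower bound, one pushes spanning trees of $C_n^f$ into $\cF_f$ by orienting each spanning tree toward an arbitrary choice of root; this realizes $[\Gamma:\Gamma_n]\cdot\tau(C_n^f)$ microstates up to a sublinear defect at the root. For the upper bound, topological microstates over $\Gamma/\Gamma_n$ are labelings $\phi:\Gamma/\Gamma_n \to S_*$ satisfying the defining condition (1) of $\cF_f$ exactly and condition (2) at all bounded radii; such labelings decompose, after removing a subexponentially small defect region, as oriented spanning trees of $C_n^f$. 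Since $[\Gamma:\Gamma_n]^{-1}\log [\Gamma:\Gamma_n] \to 0$, both bounds give $\lim_n [\Gamma:\Gamma_n]^{-1}\log\tau(C_n^f)$.

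The principal obstacle is making these upper bounds rigorous. The no-cycle condition defining $\cF_f$ is global but only locally verifiable, so microstates with long invisible cycles must be shown not to inflate the count — one expects to rule these out using the functional-graph structure imposed by condition (1) together with the Matrix-Tree formula applied to $C_n^f$. Dually, on the measure side the difficulty is converting Benjamini-Schramm convergence of $C_n^f \to C(\Gamma,f)$ into an exact sofic microstate count; the sharp asymptotic $[\Gamma:\Gamma_n]^{-1}\log\tau(C_n^f)\to {\bf h}(C(\Gamma,f))$ (coming from Lück-type approximation of the Laplacian $f$, and already in hand from the first sublink) provides the required two-sided pinching.
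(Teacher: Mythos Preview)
Your first two sublinks and two of your four dynamical bounds match the paper: the identities $\log\det_{\cN\Gamma}f = {\bf h}(C(\Gamma,f)) = \lim_n [\Gamma:\Gamma_n]^{-1}\log\tau(C_n^f)$ are exactly Section~\ref{S-approximation}, your lower bound for $h_{\Sigma,\nu_{WSF}}$ via periodic lifts of spanning trees and weak-$*$ convergence of the UST to $\nu_{WSF}$ is Lemma~\ref{lem:lower2}, and your upper bound for $h_\Sigma(\cF_f)$ via decomposing microstates into near-spanning-trees after excising a small defect set is Lemma~\ref{lem:upper2}.

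The structural difference is that the paper does \emph{not} prove the other two bounds you propose. It closes the loop with a single chain
\[
\limsup_n \frac{\log\tau(C_n^f)}{[\Gamma:\Gamma_n]} \;\le\; h_{\Sigma,\nu_{WSF}}(2^E,\Gamma) \;\le\; h_\Sigma(\cF_f,\Gamma) \;\le\; \limsup_n \frac{\log\tau(C_n^f)}{[\Gamma:\Gamma_n]},
\]
where the middle inequality (Lemmas~\ref{lem:model} and~\ref{lem:model2}) is a measure-conjugacy: since $\nu_{WSF}$-a.e.\ tree component is one-ended when $\Gamma$ is not virtually $\Z$ \cite[Theorem~10.1]{BLPS01}, orienting every edge toward that end gives an equivariant Borel isomorphism $(2^E,\nu_{WSF})\to(\cF_f,\Phi_*\nu_{WSF})$, and then the variational principle yields $h_{\Sigma,\nu_{WSF}} \le h_\Sigma(\cF_f)$. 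The virtually-$\Z$ case is handled by coupling with the two ends of $\Gamma$. This conjugacy is the idea your outline is missing.

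Your two extra bounds are where the sketch has genuine gaps. For the direct upper bound on $h_{\Sigma,\nu_{WSF}}$, a microstate into $2^E$ records an arbitrary subset of $S_*$ at each coset, not a single outgoing edge, so the functional-graph structure you invoke from condition~(1) of $\cF_f$ is simply absent; extracting a near-spanning-tree from a configuration that is only \emph{statistically} forest-like is a different and harder problem. For the direct lower bound on $h_\Sigma(\cF_f)$, orienting a spanning tree of $C_n^f$ toward a root and lifting $\Gamma_n$-periodically does not produce an element of $\cF_f$: you must assign an outgoing edge at the root, creating a cycle in $C_n^f$, and unless that cycle happens to be homotopically nontrivial the lift to $\Gamma$ contains a genuine cycle and violates condition~(2). ``Sublinear defect at the root'' does not resolve this, since every value of a microstate must lie in $\cF_f$. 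Both difficulties are bypassed entirely by the paper's conjugacy step.
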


{\bf Acknowledgements}: L. B. would like to thank  Russ Lyons for asking the question,  ``what is the entropy of the harmonic model on the free group?'' which started this project. L.B. would also like to thank Andreas Thom for suggesting the use of Property (T) to prove an upper bound on the sofic entropy of the harmonic model. This idea turned out to be a very useful entry way into the problem. Useful conversations occurred while L.B. visited the Banff International Research Station, the Mathematisches Forschungsinstitut Oberwolfach, and the Institut Henri Poincar\'e. L.B. was partially supported by NSF grants DMS-0968762 and DMS-0954606.

 H. L. was partially supported by NSF Grants DMS-0701414 and DMS-1001625. This work was carried out while H.L. visited the Institut Henri Poincar\'{e} and the math departments of Fudan University and University of Science and Technology of China in the summer of 2011. He thanks Wen Huang, Shao Song, Yi-Jun Yao, and Xiangdong Ye for warm hospitality. H.L. is also grateful to Klaus Schmidt for sending him the manuscript \cite{LS}.

\section{Notation and preliminaries} \label{S-notation}

Throughout the paper, $\Gamma$ denotes a
countable group with identity element $e$. We denote $f \in \ell^p(\Gamma):=\ell^p(\Gamma, \Rb)$ by $f=(f_s)_{s\in \Gamma}$. Given $g\in \ell^1(\Gamma)$ and $h\in \ell^\infty(\Gamma)$ we define the convolution $g h \in \ell^\infty(\Gamma)$ by
  \begin{eqnarray*}
  (gh)_w &:=& \sum_{s \in \Gamma} g_s h_{s^{-1}w} =\sum_{s \in \Gamma} g_{ws^{-1}} h_s, \quad \forall w\in \Gamma.
   \end{eqnarray*}
  More generally, whenever $g$ and $h$ are functions of $\Gamma$ we define $gh$ as above whenever this formula is well-defined. Let $\Z\Gamma\subset \ell^\infty(\Gamma)$ be the subring of all elements $f$ with $f_s \in \Z~\forall s \in \Gamma$ and $f_s=0$ for all but finitely many $s\in \Gamma$. Similarly $\R\Gamma\subset \ell^\infty(\Gamma)$ is the subring of all elements $f$ with $f_s \in \R~\forall s \in \Gamma$ and $f_s=0$ for all but finitely many $s\in \Gamma$.  The element $1 \in \Z\Gamma$ is defined by $1_e=1, 1_s =0 ~\forall s\in\Gamma \setminus\{e\}$.

    Given sets $A$ and $B$, $A^B$ denotes the set of all functions from $B$ to $A$. We frequently identify the unit circle $\T$  with $\R/\Z$. If $g\in \Z\Gamma$  and $h\in \T^\Gamma$ then define the convolutions $g h$ and $h g \in \T^\Gamma$ as above.

  The {\em adjoint} of an element $g \in \C^\Gamma$ is defined by $g^*(s) := \overline{g(s^{-1})}$.

  A probability measure $\mu$ on $\Gamma$ can be thought of as an element of $\ell^1(\Gamma)$. Thus we write $\mu_s=\mu(\{s\})$ for any $s\in \Gamma$, and $\mu^n$ denotes the $n$-th convolution power of $\mu$.


 The group $\Gamma$ acts isometrically on the Hilbert space $\ell^2(\Gamma, \Cb)$ from the left by $(sf)_t=f_{s^{-1}t}$ for all $f\in \ell^2(\Gamma, \Cb)$ and  $s, t\in \Gamma$. Denote by $\cB(\ell^2(\Gamma, \Cb))$ the algebra of bounded linear operators from $\ell^2(\Gamma, \Cb)$ to itself.
 The group von Neumann algebra $\cN \Gamma$ is the algebra of elements in $\cB(\ell^2(\Gamma, \Cb))$ commuting with the left action of $\Gamma$ (see \cite[Section 2.5]{BO} for detail), and is complete under the operator norm $\|\cdot \|$.

 For each $g\in \Rb\Gamma$, we denote by $R_g$ the operator $\ell^2(\Gamma, \Cb)\rightarrow \ell^2(\Gamma, \Cb)$ defined by
 $R_g(x)=xg$ for $x\in \ell^2(\Gamma, \Cb)$. It is easy to see that $R_g\in \cN\Gamma$.
 Then we have the injective $\Rb$-algebra homomorphism $\Rb\Gamma\rightarrow \cN(\Gamma)$ sending $g$ to $R_{g^*}$. In this way we shall think of
 $\Rb\Gamma$ as a subalgebra of $\cN\Gamma$.

 For each $s\in \Gamma$, we also think of $s$ as the element in $\ell^2(\Gamma, \Cb)$ being $1$ at $s$ and $0$ at $t\in \Gamma\setminus \{s\}$.
 The canonical trace $\tr_{\cN\Gamma}$ on $\cN\Gamma$ is the linear functional $\cN\Gamma\rightarrow \Cb$ defined by
 $$ \tr_{\cN\Gamma}(T)=\left< Te, e\right>.$$

 An element $T\in \cN\Gamma$ is called positive if $\left<Tx, x\right>\ge 0$ for all $x\in \ell^2(\Gamma, \Cb)$. Let $T\in \cN\Gamma$ be positive. The spectral measure of $T$, is the unique Borel probability measure $\mu$ on the interval $[0, \|T\|]$ satisfying
\begin{align} \label{E-spectral measure}
 \int_0^{\|T\|}p(t)\, d\mu(t)=\tr_{\cN\Gamma}(p(T))
\end{align}
for every complex-coefficients polynomial $p$. If $\ker T=\{0\}$, then the Fuglede-Kadison determinant $\det_{\cN\Gamma}(T)$ of $T$ \cite{FK} is defined as
\begin{align} \label{E-determinant}
 \ddet_{\cN\Gamma}(T)=\exp\left(\int_{0+}^{\|T\|}\log t\, d\mu(t)\right).
\end{align}
If furthermore $f$ is invertible in $\cN\Gamma$, then
\begin{align} \label{E-determinant invertible}
 \ddet_{\cN\Gamma}(T)=\exp(\tr_{\cN\Gamma}\log T).
\end{align}

For a locally compact abelian group $X$, we denote by $\widehat{X}$ its Pontryagin dual, i.e. the locally compact abelian group of continuous group homomorphisms $X\rightarrow \Rb/\Zb$. For $f\in \Zb\Gamma$, we set $X_f=\widehat{\Zb\Gamma/\Zb\Gamma f}$. Note that one can identify $\widehat{\Zb\Gamma}$ with
$(\Rb/\Zb)^\Gamma$ naturally, with the pairing between $x\in (\Rb/\Zb)^\Gamma$ and $g\in \Zb\Gamma$ given by
$$ \left<x, g\right>=(xg^*)_e.$$
It follows that we may identify $X_f$ with $\{x\in (\Rb/\Zb)^\Gamma: xf^*=0\}$ naturally, with the pairing between $x\in X_f$ and $g+\Zb\Gamma f\in \Zb\Gamma/\Zb\Gamma f$ for $g\in \Zb\Gamma$ given by
\begin{align} \label{E-pairing}
\left<x, g+\Zb\Gamma f\right>=(xg^*)_e.
\end{align}

\section{Approximation by finite models}\label{S-approximation}

The purpose of this section is to prove:
\begin{theorem}\label{T-det vs number of fixed point}
Let $\Gamma$ be a finitely generated and residually finite infinite group. Let $\Sigma=\{\Gamma_n\}^\infty_{n=1}$ be a sequence of finite-index normal subgroups of $\Gamma$ such that $\bigcap_{n\in \Nb}\bigcup_{i\ge n}\Gamma_i=\{e\}$. Let $f\in \Zb\Gamma$ be well-balanced (Definition~\ref{defn:well-balanced}). Then
$$ \log \ddet_{\cN\Gamma} f= {\bf h}(C(\Gamma,f)) = \lim_{n\to \infty} [\Gamma:\Gamma_n]^{-1} \log \tau(C_n^f)= \lim_{n\to \infty}  [\Gamma:\Gamma_n]^{-1} \log |\Fix_{\Gamma_n}(X_f)|$$
where $\tau(C_n^f)$ is the number of spanning trees of the Cayley graph $C_n^f$ (Definition~\ref{defn:Cayley}), $\Fix_{\Gamma_n}(X_f)$ is the  fixed point set of $\Gamma_n$ in $X_f$, and $|\Fix_{\Gamma_n}(X_f)|$ is the number of connected components of $\Fix_{\Gamma_n}(X_f)$.
\end{theorem}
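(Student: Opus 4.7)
The plan is to combine three ingredients: the classical Matrix-Tree Theorem, a Smith-normal-form computation, and a L\"uck-style spectral approximation. First, write $f=f_e(1-\mu)$ with $\mu_s=|f_s|/f_e$ for $s\ne e$ and $\mu_e=0$, so by the well-balanced hypothesis $\mu$ is a symmetric probability measure whose support generates $\Gamma$. For each $n$, the Laplacian $\Delta_n$ of the multigraph $C_n^f$ is exactly the operator on $\ell^2(\Gamma/\Gamma_n)$ induced by $f$ via the quotient $\Gamma\to\Gamma/\Gamma_n$; since the image of $\supp(f)$ generates $\Gamma/\Gamma_n$, the graph is connected and $\Delta_n$ has a simple zero eigenvalue. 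The Matrix-Tree Theorem for multigraphs then gives $\tau(C_n^f)=N_n^{-1}\det{}^*(\Delta_n)$ with $N_n=[\Gamma:\Gamma_n]$. Putting $\Delta_n$ in Smith normal form $\diag(d_1,\ldots,d_{N_n-1},0)$ over $\Z$, I identify (via the pairing \eqref{E-pairing}) $\Fix_{\Gamma_n}(X_f)$ with $\ker(\Delta_n\colon(\R/\Z)^{\Gamma/\Gamma_n}\to(\R/\Z)^{\Gamma/\Gamma_n})\cong(\R/\Z)\oplus\bigoplus_{i<N_n}\Z/d_i\Z$, whose number of connected components is $\prod_{i<N_n}d_i=\det{}^*(\Delta_n)/N_n=\tau(C_n^f)$. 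Since $N_n^{-1}\log N_n\to 0$, this already reduces everything to computing $\lim_n N_n^{-1}\log\det{}^*(\Delta_n)$.

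For that limit, let $\sigma_n$ be the normalized empirical eigenvalue measure of $\Delta_n$ on $[0,\|f\|]$ and $\sigma$ the spectral measure of $f\in\cN\Gamma$. For each fixed $k\ge1$, $N_n^{-1}\Tr(\Delta_n^k)=(f^k)_e=\tr_{\cN\Gamma}(f^k)$ as soon as $n$ is so large that $\supp(f^k)$ is disjoint from $\Gamma_n\setminus\{e\}$; so all moments of $\sigma_n$ converge to those of $\sigma$, giving weak-$\ast$ convergence $\sigma_n\to\sigma$. The classical vanishing of $\ell^2$-harmonic functions on an infinite Cayley graph forces $\ker(1-\mu)=\{0\}$ in $\ell^2(\Gamma,\Cb)$, so $\sigma$ has no atom at $0$ and $\log\ddet_{\cN\Gamma} f=\int_{0+}^{\|f\|}\log t\,d\sigma(t)$ is a well-defined real number, while $N_n^{-1}\log\det{}^*(\Delta_n)=\int_{(0,\|f\|]}\log t\,d\sigma_n(t)$. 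The remaining identity ${\bf h}(C(\Gamma,f))=\log\ddet_{\cN\Gamma} f$ I would obtain separately by expanding $\log(1-s)=-\sum_{k\ge1}s^k/k$ against the spectral measure of $\mu$, yielding $\log\ddet_{\cN\Gamma}(1-\mu)=-\sum_{k\ge1}k^{-1}\tr_{\cN\Gamma}(\mu^k)=-\sum_{k\ge1}k^{-1}\mu^k(e)$, and then adding $\log f_e=\log\ddet_{\cN\Gamma}(f_e\cdot 1)$.

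The hard part is converting weak-$\ast$ convergence $\sigma_n\to\sigma$ into convergence of $\int\log t\,d\sigma_n\to\int\log t\,d\sigma$ in the presence of the $\log t$ singularity at zero. The bounded contribution $\int_{[\epsilon,\|f\|]}\log t\,d\sigma_n\to\int_{[\epsilon,\|f\|]}\log t\,d\sigma$ is immediate from weak-$\ast$ convergence for each fixed $\epsilon>0$. What is delicate is the uniform log-integrability near zero, $\lim_{\epsilon\to0^+}\sup_n\int_{(0,\epsilon)}|\log t|\,d\sigma_n(t)=0$, which controls the small eigenvalues of $\Delta_n$ uniformly in $n$. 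This is the analytic heart of L\"uck's approximation theorem for Fuglede-Kadison determinants of $\Z\Gamma$-operators on residually finite groups; here the plan is to exploit that $\Delta_n\in M_{N_n}(\Z)$, so $\det{}^*(\Delta_n)$ is a positive integer and hence $\int\log t\,d\sigma_n\ge-N_n^{-1}\log N_n$ when combined with the upper bound $\log\|f\|$, together with a uniform dimension-counting estimate on $\#\{i:\lambda_i(\Delta_n)<\epsilon\}$ of the form $N_n^{-1}\#\{\lambda_i<\epsilon\}\le C/|\log\epsilon|$, which propagates from the corresponding bound on $\sigma$ (itself a consequence of $\log\ddet_{\cN\Gamma} f>-\infty$) through the integer structure of $\Delta_n$.
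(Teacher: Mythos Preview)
Your combinatorial reductions are correct and close to the paper's: the Matrix-Tree identity is the same, your Smith-normal-form computation of $|\Fix_{\Gamma_n}(X_f)|=\prod_i d_i=\tau(C_n^f)$ is a valid alternative to the paper's vertex-deletion argument (Lemma~\ref{lem:harmonic points}), and the power-series identity ${\bf h}(C(\Gamma,f))=\log\ddet_{\cN\Gamma} f$ is exactly Proposition~\ref{P-tree entropy}. The gap is in the limit $N_n^{-1}\log\det{}^*(\Delta_n)\to\log\ddet_{\cN\Gamma} f$. Weak-$\ast$ convergence $\sigma_n\to\sigma$ together with integrality of $\det{}^*(\Delta_n)$ gives only the \emph{upper} bound $\limsup_n\int_{0+}\log t\,d\sigma_n\le\int_{0+}\log t\,d\sigma$ (truncate $\log$ from above by $\max(\log t,\log\epsilon)$, use $\sigma_n(\{0\})=N_n^{-1}\to0$, and let $\epsilon\to0$). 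Integrality does yield the dimension estimate $\sigma_n((0,\epsilon))\le C/|\log\epsilon|$ you state, but that is too weak for uniform log-integrability: the layer-cake bound becomes $\int_{|\log\epsilon|}^{\infty} C\,s^{-1}\,ds=+\infty$, and the sharpest uniform conclusion integrality delivers is $\int_{(0,\epsilon)}|\log t|\,d\sigma_n\le\log\|f\|_1$, not $o(1)$ as $\epsilon\to0$. The missing lower bound $\liminf_n\int_{0+}\log t\,d\sigma_n\ge\log\ddet_{\cN\Gamma} f$ is precisely the approximation conjecture for Fuglede-Kadison determinants, which remains open for general $f\in\Z\Gamma$ over residually finite $\Gamma$; there is no general ``L\"uck approximation theorem for determinants'' to invoke.

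The paper sidesteps this entirely by quoting \cite[Theorem~3.2]{Lyons05}, whose proof is specific to Laplacians: it uses the stochastic factorization $\Delta_n=f_e(I-P_n)$ and a direct analysis showing the trace series $\sum_{k\ge1} k^{-1}N_n^{-1}\Tr(P_n^k)$ converges to $\sum_{k\ge1} k^{-1}(\mu^k)_e$, exploiting the random-walk structure rather than any generic spectral-measure argument. If you want a self-contained proof you must reproduce that Laplacian-specific analysis; the L\"uck-style route you sketch establishes only one of the two inequalities.
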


The following result is a special case of  \cite[Theorem 3.1]{Lyons10}. For the convenience of the reader, we give a proof here.

\begin{proposition} \label{P-tree entropy}
Let $\Gamma$ be a countable group and $f\in \Rb\Gamma$ be well-balanced. Set $\mu=-(f-f_e)/f_e$.
We have
$$ \log \ddet_{\cN\Gamma}f
= \log f_e-\sum_{k=1}^\infty \frac{1}{k}(\mu^k)_e.$$
\end{proposition}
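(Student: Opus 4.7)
The plan is to reduce the Fuglede--Kadison determinant of $f$ to a spectral integral, and then recover the series by approaching the boundary of convergence of the operator-norm-convergent power series for the logarithm.

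Since $f$ is well-balanced and nonzero, $f_e = \sum_{s\ne e}|f_s| > 0$, and a direct check shows that $\mu := -(f - f_e)/f_e \in \R\Gamma$ is a symmetric probability measure on $\Gamma$ with $\mu_e = 0$ whose support still generates $\Gamma$. Writing $f = f_e(1 - \mu)$ in $\R\Gamma$, the associated operator is $R_f = f_e(I - R_\mu)$; since $f = f^*$ and $\|R_\mu\| \le \|\mu\|_{\ell^1} = 1$, the element $R_\mu \in \cN\Gamma$ is a self-adjoint contraction, so $I - R_\mu$ is positive. Using $\tr_{\cN\Gamma}(I) = 1$, the scaling identity $\ddet_{\cN\Gamma}(cT) = c\,\ddet_{\cN\Gamma}(T)$ for $c > 0$ gives
\[ \log \ddet_{\cN\Gamma} f = \log f_e + \log \ddet_{\cN\Gamma}(I - R_\mu), \]
so it suffices to prove $\log \ddet_{\cN\Gamma}(I - R_\mu) = -\sum_{k \ge 1}(\mu^k)_e/k$.

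Let $\nu$ be the spectral measure of $R_\mu$, a Borel probability measure on $[-1,1]$. The key moment identity is $\int_{-1}^1 t^k \, d\nu(t) = \tr_{\cN\Gamma}(R_\mu^k) = \tr_{\cN\Gamma}(R_{\mu^k}) = (\mu^k)_e$. I will also verify $\nu(\{1\}) = 0$: any $x \in \ell^2(\Gamma)$ with $R_\mu x = x$ is bounded, tends to $0$ at infinity (since $|\{g : |x_g| > \epsilon\}| < \infty$), and so attains its sup-norm at some $g_0$; then $x_{g_0} = \sum_s \mu_s x_{g_0 s^{-1}}$ and a standard maximum-principle argument (applied to real and imaginary parts, iterated along words in $\supp\mu$, which generates $\Gamma$) forces $x$ to be constant, hence zero. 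Thus $I - R_\mu$ is positive with trivial kernel, and the spectral-integral definition of $\ddet_{\cN\Gamma}$ gives
\[ \log \ddet_{\cN\Gamma}(I - R_\mu) = \int_{-1}^1 \log(1-t)\, d\nu(t) \in [-\infty, \log 2]. \]

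The main computation uses a damping parameter $r \in (0,1)$. Then $\|rR_\mu\| \le r < 1$, so $I - rR_\mu$ is invertible in $\cN\Gamma$ and the norm-convergent expansion $\log(I - rR_\mu) = -\sum_{k \ge 1} r^k R_\mu^k/k$ is valid; applying $\tr_{\cN\Gamma}$ term-by-term (justified by norm-continuity) and using the moment identity yields
\[ \log \ddet_{\cN\Gamma}(I - rR_\mu) = -\sum_{k=1}^\infty \frac{r^k (\mu^k)_e}{k}. \]
I then let $r \to 1^-$. On the right, $(\mu^k)_e \ge 0$, so monotone convergence gives $-\sum_{k \ge 1} (\mu^k)_e/k \in [-\infty, 0]$. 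On the left, splitting $\int_{-1}^1 \log(1-rt)\, d\nu(t)$ at $t = 0$, the integrand is non-negative and increasing in $r$ on $[-1, 0]$ and non-positive and decreasing in $r$ on $[0, 1]$; applying the monotone convergence theorem to each piece, with $\nu(\{1\}) = 0$ absorbing the logarithmic singularity at $t = 1$, yields $\int \log(1-rt)\, d\nu \to \int \log(1-t)\, d\nu$ in $[-\infty, \log 2]$. Matching the limits finishes the proposition.

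The main obstacle I expect is the $r \to 1^-$ passage in the amenable case, where $1 \in \spec R_\mu$ by Kesten's theorem, the integrand has a genuine logarithmic singularity at $t = 1$, and both sides may equal $-\infty$. The split-MCT argument is designed to track this consistently; the essential analytic input is $\nu(\{1\}) = 0$, which guarantees that the spectral measure of $I - R_\mu$ has no atom at $0$ and therefore that the spectral-integral formula for $\log \ddet_{\cN\Gamma}(I - R_\mu)$ remains meaningful even when the determinant vanishes.
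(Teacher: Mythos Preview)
Your proof is correct and follows essentially the same strategy as the paper: regularize so that the logarithm has a norm-convergent power series, take the trace term by term, and pass to the limit using $(\mu^k)_e\ge 0$. Your damping parameter $r\in(0,1)$ is just a reparametrization of the paper's additive shift $\varepsilon>0$ via $r=f_e/(f_e+\varepsilon)$; the only substantive difference is that you justify the limit $r\to 1^-$ directly by split monotone convergence on the spectral integral (and verify $\ker(I-R_\mu)=0$ by the maximum principle), whereas the paper invokes the Fuglede--Kadison continuity lemma $\lim_{\varepsilon\to 0^+}\ddet_{\cN\Gamma}(f+\varepsilon)=\ddet_{\cN\Gamma}f$ and defers the kernel-triviality argument to later in the paper.
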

\begin{proof} Note that $f$ is positive in $\cN\Gamma$. Let $\varepsilon>0$. Then the norm of $\frac{f_e}{f_e+\varepsilon}\mu$ in $\cN\Gamma$ is bounded above by
$\|\frac{f_e}{f_e+\varepsilon}\mu\|_1$, which in turn is strictly smaller than $1$.
Thus $\varepsilon+f=(f_e+\varepsilon)(1-\frac{f_e}{f_e+\varepsilon}\mu)$ is positive and invertible in $\cN\Gamma$.
Therefore in $\cN\Gamma$ we have
\begin{align*}
\log (\varepsilon+f)= \log (f_e+\varepsilon)-\sum_{k=1}^\infty \frac{1}{k}\cdot \left(\frac{f_e}{f_e+\varepsilon}\mu\right)^k,
\end{align*}
and the right hand side converges in norm. Thus
\begin{align*}
\log \ddet_{\cN\Gamma}(\varepsilon+f)&\overset{\eqref{E-determinant invertible}}=\tr_{\cN\Gamma} \log (\varepsilon+f)\\
&=\log (f_e+\varepsilon)-\sum_{k=1}^\infty \frac{(f_e)^k}{(f_e+\varepsilon)^kk}(\mu^k)_e.
\end{align*}
We have $\lim_{\varepsilon\to 0+}\ddet_{\cN\Gamma}(f+\varepsilon)=\ddet_{\cN\Gamma}f$ \cite[Lemma 5]{FK}. Note that $(\mu^k)_e\ge 0$ for all $k\in \Nb$. Thus
\begin{align*}
\log \ddet_{\cN\Gamma}f &=\lim_{\varepsilon\to 0+}\log \ddet_{\cN\Gamma}(f+\varepsilon) \\
&= \log f_e- \sum_{k=1}^\infty \frac{1}{k}(\mu^k)_e.
\end{align*}
\end{proof}

Recall that all graphs in this paper are allowed to have multiple edges and loops.
\begin{lemma}\label{lem:matrix-tree}
Let $G=(V,E)$ be a finite connected graph and let $\Delta_G:\R^V \to \R^V$ be the graph Laplacian: $\Delta_G x(v) = \sum_{\{v,w\}\in E} (x(v)-x(w))$. Let $\det^*(\Delta_G)$ be the product of the nonzero eigenvalues of $\Delta_G$. Then $|V|^{-1}\det^* \Delta_G =  \tau(G)$, the number of spanning trees in $G$. Moreover, if $v_0 \in V$ is any vertex, $V_0:=V \setminus \{v_0\}$,  $P_0:\R^V \to \R^{V_0}$ is the projection map and $\Delta^0_G:\R^{V_0} \to \R^{V_0}$ is defined by $\Delta^0_G = P_0\Delta_G$, then
$$\det(\Delta^0_G) = |V|^{-1} \ddet^*(\Delta_G) = \tau(G).$$
\end{lemma}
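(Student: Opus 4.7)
My plan is to reduce the lemma to the classical Matrix--Tree Theorem by factoring the Laplacian through the signed incidence matrix, and then extract the eigenvalue statement from a characteristic polynomial computation.

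First I would fix an arbitrary orientation of each edge $e$ of $G$ and define the signed incidence map $\partial\colon \R^E\to \R^V$ by $\partial\delta_e = \delta_{v_1^e}-\delta_{v_2^e}$, where $v_1^e$ is the head and $v_2^e$ the tail of $e$ (a loop gives $\partial\delta_e=0$ and is therefore neutralized, consistent with the fact that loops affect neither $\Delta_G$ nor $\tau(G)$). A direct pointwise calculation shows $\partial\partial^* = \Delta_G$ regardless of the chosen orientation. Composing with $P_0$ yields $\Delta_G^0 = \partial_0\partial_0^*$, where $\partial_0 := P_0\partial$.

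Next, apply the Cauchy--Binet formula:
$$ \det \Delta_G^0 \;=\; \det(\partial_0\partial_0^*) \;=\; \sum_{\substack{T\subset E \\ |T|=|V|-1}} \bigl(\det \partial_0|_T\bigr)^2. $$
The combinatorial input, which I would prove by induction on $|V|$ (if $T$ is a spanning tree, pivoting on a leaf $v\ne v_0$ reduces $\partial_0|_T$ to a smaller reduced incidence matrix with a unit diagonal entry, while if $T$ contains a cycle then the corresponding columns of $\partial|_T$ are linearly dependent and this persists after deleting the row at $v_0$), asserts that $|\det\partial_0|_T| = 1$ exactly when $T$ is the edge set of a spanning tree and $\det\partial_0|_T=0$ otherwise. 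This yields $\det\Delta_G^0 = \tau(G)$, and the argument is independent of the choice of deleted vertex, so $\det\Delta_G^v = \tau(G)$ for every $v\in V$.

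Finally, to relate $\det^*(\Delta_G)$ to $|V|\cdot \det\Delta_G^0$, I would analyze the characteristic polynomial. Since $G$ is connected, the positive semidefinite operator $\Delta_G$ has a one-dimensional kernel (the constant functions), so $p(t):=\det(tI-\Delta_G) = t\, q(t)$ with $q(0) = (-1)^{|V|-1}\det^*(\Delta_G)$. On the other hand, $q(0)=p'(0)$ equals $(-1)^{|V|-1}$ times the sum of all $(|V|-1)\times(|V|-1)$ principal minors of $\Delta_G$, i.e.\ $(-1)^{|V|-1}\sum_{v\in V}\det\Delta_G^v$. Combining the two expressions gives $\det^*(\Delta_G) = |V|\tau(G)$, completing both halves of the lemma.

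The only substantive step is the combinatorial identification of $|\det\partial_0|_T|$ with the spanning-tree indicator; everything else is standard linear algebra. So I expect no real obstacles, though some care is warranted in handling multiple edges (which Cauchy--Binet accommodates automatically since each parallel edge contributes a distinct column) and loops (which contribute zero columns and are absorbed harmlessly).
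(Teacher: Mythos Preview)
Your argument is correct and is the standard proof of the Matrix--Tree Theorem via the signed incidence matrix, Cauchy--Binet, and the principal-minor expansion of the characteristic polynomial. The paper itself does not prove this lemma at all: its entire proof reads ``This is the Matrix-Tree Theorem (see e.g., \cite{GR} Lemmas 13.2.3, 13.2.4),'' so you have supplied the details the paper merely cites, and your route is essentially the one found in that reference.
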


\begin{proof}
This is the Matrix-Tree Theorem (see e.g., \cite[Lemmas 13.2.3, 13.2.4]{GR}).
\end{proof}

\begin{lemma}\label{lem:correspondence}
Let $M$ be an $m\times m$ matrix with integral entries and inverse $M^{-1}$ with real entries. Let $\phi: \R^m \to \R^m$ be the corresponding linear transformation. Then the absolute value of the determinant of $M$ equals the number of integral points in $\phi([0,1)^m)$.
\end{lemma}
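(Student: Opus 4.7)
The plan is to realize $\phi([0,1)^m)$ as a fundamental domain for the sublattice $\phi(\Z^m) \subset \Z^m$ and then use the standard fact that this sublattice has index $|\ddet M|$ in $\Z^m$.

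First I would observe that because $M$ has integer entries, $\phi(\Z^m) = M\Z^m$ is a subgroup of $\Z^m$, and because $M$ is invertible over $\R$, it is a sublattice of full rank. The image $\phi([0,1)^m) = M([0,1)^m)$ is a half-open parallelepiped spanned by the columns of $M$, and since $\{k_1 e_1 + \cdots + k_m e_m : k_i \in \Z\} = \Z^m$ tiles $\R^m$ via the unit cube $[0,1)^m$, applying $\phi$ gives that the translates $\{v + \phi([0,1)^m) : v \in M\Z^m\}$ form a partition of $\R^m$. In other words, $\phi([0,1)^m)$ is a set of coset representatives for $\R^m / M\Z^m$.

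Next I would build the bijection between $\Z^m \cap \phi([0,1)^m)$ and the finite quotient group $\Z^m/M\Z^m$. The inclusion $\Z^m \cap \phi([0,1)^m) \hookrightarrow \Z^m \twoheadrightarrow \Z^m/M\Z^m$ is injective, because any two points of $\phi([0,1)^m)$ that are congruent mod $M\Z^m$ must be equal by the tiling property. It is surjective: given any $x \in \Z^m$, the tiling property produces a unique $y \in \phi([0,1)^m)$ with $x - y \in M\Z^m$; then $y = x - Mk$ for some $k \in \Z^m$, and since $M$ has integer entries, $Mk \in \Z^m$, so $y \in \Z^m \cap \phi([0,1)^m)$ represents the coset of $x$.

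Finally, $|\Z^m / M\Z^m| = |\ddet M|$ by the Smith normal form of $M$ (or equivalently, by reducing $M$ to its elementary divisor form and noting that $\Z^m/M\Z^m \cong \bigoplus_i \Z/d_i\Z$ where $\prod_i d_i = |\ddet M|$). Combining the two bijections yields $|\Z^m \cap \phi([0,1)^m)| = |\ddet M|$, as required. There is no real obstacle here; the only point worth being careful about is verifying that the image $y = x - Mk$ produced from the fundamental domain is itself integral, which is exactly where the integrality of the entries of $M$ is used (the invertibility over $\R$ alone would not suffice).
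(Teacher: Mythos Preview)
Your proof is correct and complete. The paper itself does not give an argument here; it simply cites \cite[Lemma 4]{So98}. Your approach---identifying $\phi([0,1)^m)$ as a fundamental domain for the sublattice $M\Z^m$, establishing the bijection $\Z^m \cap \phi([0,1)^m) \to \Z^m/M\Z^m$, and invoking the Smith normal form to compute the index---is the standard one, and you have correctly flagged the one place where the integrality of $M$ is essential (ensuring the coset representative $y = x - Mk$ lands in $\Z^m$).
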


\begin{proof}
This is \cite[Lemma 4]{So98}.
\end{proof}

\begin{lemma}\label{lem:harmonic points}
Let $G=(V,E)$ be a finite connected graph. Let $\Delta_G:\R^V \to \R^V$ be the graph Laplacian. We also consider $\Delta_G$ as a map from $(\R/\Z)^V$ to itself. Let $X_G < (\R/\Z)^V$ be the subgroup consisting of all $x\in (\R/\Z)^V$ with $\Delta_G x = \Z^V$. Let $|X_G|$ denote the number of connected components of $X_G$. Then
$$|X_G| = |V|^{-1}\ddet^*(\Delta_G)$$
where $\ddet^*(\Delta_G)$ is the product of the non-zero eigenvalues of $\Delta_G$.
\end{lemma}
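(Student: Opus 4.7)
The plan is to explicitly decompose $X_G$ as a direct sum of a circle and a finite abelian group whose order is $\tau(G)$, and then appeal to the Matrix--Tree Theorem (Lemma~\ref{lem:matrix-tree}) or to Lemma~\ref{lem:correspondence} to identify $\tau(G)=|V|^{-1}\ddet^*(\Delta_G)$. The key observation is that, since $G$ is connected, $\ker(\Delta_G\colon \R^V\to\R^V)=\R\cdot\mathbf{1}$ and $\mathrm{Im}(\Delta_G)=\R^{V}_0:=\{y\in\R^V:\sum_v y(v)=0\}$. Fixing $v_0\in V$ and using the maps $\iota\colon\R^{V_0}\hookrightarrow\R^V$ (extending by $0$ at $v_0$) and $P_0\colon\R^V\to\R^{V_0}$ of the statement, both $\R^V=\R\cdot\mathbf{1}\oplus\iota(\R^{V_0})$ and $\Z^V=\Z\cdot\mathbf{1}\oplus\iota(\Z^{V_0})$ split as direct sums.

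First I would compute the preimage $\Delta_G^{-1}(\Z^V)$ with respect to this decomposition. Writing $x=c\mathbf{1}+\iota(y)$ with $c\in\R$ and $y\in\R^{V_0}$, I would note that $\Delta_G x=\Delta_G\iota(y)$; and since $\Delta_G\iota(y)\in\R^V_0$ automatically, the condition $\Delta_G\iota(y)\in\Z^V$ reduces to $P_0\Delta_G\iota(y)=\Delta^0_G y\in\Z^{V_0}$, because the $v_0$-coordinate is then forced to be the integer $-\sum_{v\ne v_0}(\Delta^0_G y)(v)$. This shows
$$\Delta_G^{-1}(\Z^V)=\R\cdot\mathbf{1}\;\oplus\;\iota\bigl((\Delta_G^0)^{-1}(\Z^{V_0})\bigr).$$
Taking the quotient by $\Z^V=\Z\cdot\mathbf{1}\oplus\iota(\Z^{V_0})$ respects the splitting, so
$$X_G\;\cong\;(\R/\Z)\;\oplus\;\bigl((\Delta_G^0)^{-1}(\Z^{V_0})/\Z^{V_0}\bigr).$$
The first summand is the circle giving the identity component $X_G^0$, and the second summand is a finite group whose order equals the number of connected components $|X_G|$.

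Finally I would count $|(\Delta_G^0)^{-1}(\Z^{V_0})/\Z^{V_0}|$. Choosing $[0,1)^{V_0}$ as a fundamental domain for $\Z^{V_0}$, this order equals the number of $v\in[0,1)^{V_0}$ with $\Delta_G^0 v\in\Z^{V_0}$, which under the bijection $v\mapsto\Delta_G^0 v$ is exactly the number of lattice points in $\Delta_G^0([0,1)^{V_0})$. Lemma~\ref{lem:correspondence} applied to the integer matrix $\Delta_G^0$ (invertible over $\R$ since $G$ is connected) identifies this count with $|\ddet(\Delta_G^0)|$, and Lemma~\ref{lem:matrix-tree} gives $\ddet(\Delta_G^0)=|V|^{-1}\ddet^*(\Delta_G)=\tau(G)$. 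Combining everything yields $|X_G|=|V|^{-1}\ddet^*(\Delta_G)$. The only delicate point, and the sole step that really needs care, is the direct-sum decomposition of $\Delta_G^{-1}(\Z^V)$; once that is in place the rest is bookkeeping.
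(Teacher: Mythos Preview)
Your proposal is correct and follows essentially the same route as the paper's proof: both fix a base vertex $v_0$, reduce to the invertible integer matrix $\Delta_G^0$ on $\R^{V_0}$, use the zero-sum property of $\Delta_G$ to recover the $v_0$-coordinate, and then invoke Lemma~\ref{lem:correspondence} and Lemma~\ref{lem:matrix-tree} to conclude. The only cosmetic difference is that you package the reduction as an explicit direct-sum decomposition $X_G\cong(\R/\Z)\oplus\bigl((\Delta_G^0)^{-1}(\Z^{V_0})/\Z^{V_0}\bigr)$, whereas the paper argues more concretely by picking the coset representative in $[0,1)^V$ whose $v_0$-coordinate vanishes; the content is identical.
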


\begin{proof}
This lemma is an easy generalization of results in \cite{So98}. For completeness, we provide the details. As in Lemma \ref{lem:matrix-tree}, fix a vertex $v_0 \in V$, let $V_0:=V \setminus \{v_0\}$, think of $\R^{V_0}$ as a subspace of $\R^V$ in the obvious way, let $P_0:\R^V \to \R^{V_0}$ be the projection map and $\Delta^0_G:\R^{V_0} \to \R^{V_0}$ be defined by $\Delta^0_G = P_0\Delta_G$.

By Lemma \ref{lem:matrix-tree}, $\det \Delta_G^0 = |V|^{-1} \det^* \Delta_G$ is non-zero.
Under the standard basis of $\Rb^{V_0}$ the linear map $\Delta_{G}^0$ is represented as a matrix with integral entries. Therefore, the previous lemma implies $\det \Delta_{G}^0$ equals the number of integral points in $\Delta_{G}^0( [0,1)^{V_0} )$.

If $x \in \R^V$ has $\| x\|_\infty < (2|E|)^{-1}$ and $\Delta_G x \in \Z^V$, then $\Delta_Gx=0$ and hence $x$ is constant. Therefore, each connected component of $X_G$ is a coset of the constants. Thus $|X_G|$ is the cardinality of $X_G/Z$ where $Z<(\R/\Z)^V$ denotes the constant functions.

Given $x \in X_G$, let $\tilde{x} \in [0,1)^V$ be the unique element with $\tilde{x} + \Z^V = x$. There is a unique element $x_0 \in x + Z$ such that $\tilde{x_0}(v_0)=0$. If we let $X^0_G$ be the set of all such $\tilde{x_0}$, then $X^0_G$ is a finite set with $|X^0_G|=|X_G|$.

We claim that $X^0_G$ is precisely the set of points $y \in [0,1)^{V_0}$ with $\Delta^0_G(y) \in \Z^{V_0}$. To see this, let $\tilde{x_0} \in X^0_G$. Then $\Delta_G^0 \tilde{x_0} = P_0\Delta_G \tilde{x_0} \in \Z^{V_0}$. To see the converse, let $S:\R^V \to \R$ denote the sum function: $S(y) = \sum_{v\in V} y(v)$. Note that $S(\Delta_G(y))=0$ for any $y\in \R^V$. Therefore,  if $y \in [0,1)^{V_0}$ is any point with $\Delta_G^0(y) \in \Z^{V_0}$ then because $S(\Delta_G y)=0=\Delta_G y(v_0) + S(\Delta^0_G y)$, it must be that $\Delta_G y(v_0)$ is an integer and thus $\Delta_G y \in \Z^V$. Thus $y + \Z^V \in X_G$. Because $y(v_0)=0$, we must have $y \in X_G^0$ as claimed.

So $|X_G|$ equals $|X^0_G|$ which equals the number of points $y \in [0,1)^{V_0}$ with $\Delta_{G}^0(y) \in \Z^{V_0}$. We showed above that the later equals $\det\Delta_{G}^0 = |V|^{-1}\det^* \Delta_G$.
\end{proof}

We are ready to prove Theorem~\ref{T-det vs number of fixed point}.

\begin{proof}[Proof of Theorem \ref{T-det vs number of fixed point}]
Define the Cayley graphs $C(\Gamma,f)$ and $C_n^f=C(\Gamma/\Gamma_n,f)$ as in Definition \ref{defn:Cayley}. Then the group of harmonic mod $1$ points on $C_n^f$ is canonically isomorphic with $\Fix_{\Gamma_n}(X_f)$ \cite[Lemma 7.4]{KL11a}. So Lemmas \ref{lem:matrix-tree} and \ref{lem:harmonic points} imply $|\Fix_{\Gamma_n}(X_f)| = \tau(C_n^f)$. By \cite[Theorem 3.2]{Lyons05}, $\lim_{n\to \infty} [\Gamma:\Gamma_n]^{-1} \log \tau(C_n^f) = {\bf h}(C(\Gamma,f))$. By Proposition~\ref{P-tree entropy}, ${\bf h}(C(\Gamma,f)) = \log \ddet_{\cN\Gamma}f$. This implies the result.
\end{proof}



\section{Homoclinic Group}  \label{S-homoclinic}

For an action of a countable group $\Gamma$ on a compact metrizable group $X$ by continuous automorphisms, a point $x\in X$ is called {\it homoclinic} if
$sx$ converges to the identity element of $X$ as $\Gamma\ni s\to \infty$ \cite{LS99}. The set of all homoclinic points, denoted by $\Delta(X)$, is a $\Gamma$-invariant subgroup of $X$.

The main result of this section is the following

\begin{theorem} \label{T-dense homoclinic}
Let $\Gamma$ be a countably infinite group such that $\Gamma$ is not virtually $\Zb$ or $\Zb^2$ (i.e., does not have any finite-index normal subgroup isomorphic to $\Zb$ or $\Zb^2$). Let $f\in \Zb\Gamma$ be well-balanced. Then
\begin{enumerate}
\item The homoclinic group $\Delta(X_f)$ is dense in $X$.

\item $\alpha_f$ is mixing of all orders (with respect to the Haar probability measure of $X_f$). In particular, $\alpha_f$ is ergodic.
\end{enumerate}
\end{theorem}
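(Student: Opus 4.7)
The plan is to construct a fundamental homoclinic point from the Green's function of the random walk determined by $f$, deduce density of the homoclinic subgroup by a Pontryagin-duality calculation, and then extract mixing of all orders from this density via a summability argument on translates of homoclinic points. The main obstacle will be showing that the Green's function lies in $c_0(\Gamma)$, which is needed both to produce homoclinic points and to close the density argument.

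First I would set $\mu := (f_e\delta_e - f)/f_e$, a symmetric finitely supported probability measure on $\Gamma$ with $\mu_e = 0$ and support generating $\Gamma$. Because $\Gamma$ is infinite and not virtually $\Zb$ or $\Zb^2$, Varopoulos's classification of recurrent groups gives transience of the $\mu$-random walk, so the Green's function $G := \sum_{k \ge 0}\mu^k$ is finite-valued and lies in $\ell^\infty(\Gamma)$. A separate argument, using the identity $G_s/G_e = \Pb_e[\text{walk ever hits }s]$ and the fact that a transient walk visits only finitely many sites almost surely (together with Fatou's lemma), yields $G_{s_n}\to 0$ for every $s_n\to\infty$, i.e.\ $G\in c_0(\Gamma)$. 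From $f=f_e(1-\mu)$ and $\mu G=G-\delta_e$ we get $fG=f_e\delta_e$, and by symmetry of $f$ and $G$ also $Gf=f_e\delta_e$. Setting $w:=G/f_e \in c_0(\Gamma,\Rb)$ and $x_\Delta:=\pi(w)$ with $\pi\colon\Rb^\Gamma\to(\Rb/\Zb)^\Gamma$ the quotient, we obtain $x_\Delta f = \pi(wf) = \pi(\delta_e)=0$, so $x_\Delta\in X_f$, and $x_\Delta\in\Delta(X_f)$ since $w\in c_0$.

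For part (1), let $H:=\overline{\Delta(X_f)}$, a closed $\Gamma$-invariant subgroup of $X_f$; by Pontryagin duality it suffices to show the annihilator $H^\perp \subset \Zb\Gamma/\Zb\Gamma f$ is trivial. Given $a+\Zb\Gamma f \in H^\perp$, the $\Gamma$-invariance of $H$ together with the pairing \eqref{E-pairing} yields $(wa^*)_{s^{-1}}\in\Zb$ for every $s\in\Gamma$, hence $wa^*\in\Zb^\Gamma$. Since $a^*$ is finitely supported and $w\in c_0$, the convolution $wa^*$ lies in $c_0(\Gamma,\Rb)$; thus $c:=wa^*$ is $\Zb$-valued and in $c_0$, hence finitely supported, so $c\in\Zb\Gamma$. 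Then $fc = (fw)a^* = \delta_e a^* = a^*$, giving $a^*\in f\Zb\Gamma$; taking adjoints and using $f=f^*$ gives $a\in\Zb\Gamma f$. Therefore $H^\perp=0$ and $\Delta(X_f)$ is dense in $X_f$.

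For part (2), let $\chi_0,\ldots,\chi_k \in \widehat{X_f}$ be nonzero, represented by $a_0,\ldots,a_k \in \Zb\Gamma$, and suppose $\chi_0+\sum_{i=1}^k (s_i^{(n)})^{-1}\chi_i=0$ along a sequence with $s_i^{(n)}\to\infty$ for each $i$. For any $x\in\Delta(X_f)$ with canonical $c_0$-lift $\tilde x\in\Rb^\Gamma$, a short unpacking of the pairing gives $\chi_0(x) \equiv -\sum_i (\tilde x a_i^*)_{(s_i^{(n)})^{-1}} \pmod{\Zb}$, and each summand tends to $0$ as $n\to\infty$ since $\tilde x a_i^* \in c_0$ and $(s_i^{(n)})^{-1}\to\infty$. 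Hence $\chi_0$ vanishes on $\Delta(X_f)$, and by part (1) on all of $X_f$, contradicting $\chi_0\ne 0$. This proves mixing of all orders (and in fact a stronger statement requiring only $s_i^{(n)}\to\infty$ individually, without the separation $(s_i^{(n)})^{-1}s_j^{(n)}\to\infty$). Apart from $G\in c_0(\Gamma)$, the key ingredients are the identity $fw=\delta_e$ — which converts density questions on $X_f$ into convolution statements in $\Zb\Gamma$ — and the elementary but pivotal observation that an integer-valued $c_0$ function is finitely supported.
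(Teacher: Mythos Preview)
Your overall strategy coincides with the paper's: build $\omega := f_e^{-1}\sum_{k\ge 0}\mu^k \in C_0(\Gamma)$ satisfying $f\omega = \omega f = 1$, use it to produce homoclinic points, run the Pontryagin-duality argument for density (your computation $\omega a^* \in C_0(\Gamma) \cap \Zb^\Gamma = \Zb\Gamma$, hence $a \in \Zb\Gamma f$, is exactly the paper's), and then deduce mixing by evaluating characters on homoclinic points (your part~(2) is Proposition~\ref{P-homoclinic to mixing}).

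The one genuine gap is your argument for $G \in C_0(\Gamma)$. The claim ``a transient walk visits only finitely many sites almost surely'' is false: a transient walk on an infinite group must eventually leave every finite set and hence visits infinitely many sites. Applying reverse Fatou to the events $\{s_n \text{ is hit}\}$ only shows that with positive probability infinitely many of the $s_n$ lie in the range of the walk, which is no contradiction. A correct probabilistic fix uses symmetry and the strong Markov property: if $F(e,s_n) = F(s_n,e) \ge \varepsilon$ along $s_n \to \infty$, then with probability at least $\varepsilon^2$ the walk reaches $s_n$ and subsequently returns to $e$, at a time no earlier than the word length $|s_n| \to \infty$; reverse Fatou then forces infinitely many returns to $e$ with positive probability, contradicting transience. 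The paper instead takes a short analytic route: Cauchy--Schwarz gives $(\mu^{2k})_s \le (\mu^{2k})_e$, so $\|\mu^{2k}\|_\infty = (\mu^{2k})_e$ and $\|\mu^{2k+1}\|_\infty \le (\mu^{2k})_e$, whence $\sum_k \|\mu^k\|_\infty \le 2\sum_k (\mu^k)_e < \infty$ by Varopoulos, and $G \in C_0(\Gamma)$ follows at once.
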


\begin{remark}\label{remark:homoclinic}
When $\Gamma=\Zb^2$ and $f\in \Zb\Gamma$ is well-balanced, 
$\alpha_f$ is mixing of all orders.
We are grateful to Doug Lind for explaining this to us. Indeed, let $\Gamma=\Zb^d$ for $d\in \Nb$ and $g\in \Zb\Gamma$. If $\alpha_g$ has completely positive entropy (with respect to the Haar probability measure on $X_g$), then 
$\alpha_g$ is mixing of all orders \cite{Kaminski} \cite[Theorem 20.14]{Schmidt}.
By \cite[Theorems 20.8, 18.1, and 19.5]{Schmidt}, $\alpha_g$ has completely positive entropy exactly when $g$ has no factor in $\Zb\Gamma=\Zb[u_1^{\pm}, \dots, u_d^{\pm}]$ as a generalized cyclotomic polynomial, i.e. $g$ has no factor of the form $u_1^{m_1}\dots u_d^{m_d}h(u_1^{n_1}\dots u_d^{n_d})$ for
$m_1, \dots, m_d, n_1, \dots, n_d\in \Zb$, not all $n_1, \dots, n_d$ being $0$, and $h$ being a cyclotomic polynomial in a single variable.
When $d\ge 2$, if $g$ has a factor of such form, then $g(z_1, \dots, z_d)=0$ for some $z_1, \dots, z_d$ in the unit circle of the complex plane not being all $1$. On the other hand, when $f\in \Zb\Gamma$ is well-balanced, if $f(z_1, \dots, z_d)=0$ for some $z_1, \dots, z_d$ in the unit circle of the complex plane,
then $z_1=\dots=z_d=1$.
\end{remark}

\begin{remark} \label{R-ergodic}
Lind and Schmidt \cite{LS} showed that for any countably infinite amenable group $\Gamma$ which is not virtually $\Zb$,
if $f\in \Zb\Gamma$ is not a right zero-divisor in $\Zb\Gamma$, then $\alpha_f$ is ergodic. In particular, if $\Gamma$ is virtually
$\Zb^2$ and $f\in \Zb\Gamma$ is well-balanced, then $\alpha_f$ is ergodic.
\end{remark}

\begin{remark} \label{R-integer not ergodic}
When $\Gamma=\Zb$ and $f\in \Zb\Gamma$ is well-balanced, $\alpha_f$ is not ergodic. This follows from \cite[Theorem 6.5.(1)]{Schmidt}, and can also be seen as follows. We may identify $\Zb\Gamma$ with the Laurent polynomial ring
$\Zb[u^{\pm}]$. Since the sum of the coefficients  of $f$ is $0$, one has $f=(1-u)g$ for some $g\in \Zb[u^{\pm}]$.
It follows that $g+\Zb[u^{\pm}] f\in \Zb[u^{\pm}]/\Zb[u^{\pm}] f=\widehat{X_f}$ is fixed by the action of $\Zb=\Gamma$ (i.e., it is fixed under multiplication by $u$).
As $\Zb[u^{\pm}]$ is an integral domain and $1-u$ is not invertible in $\Zb[u^{\pm}]$, the element $g$ is not in $\Zb[u^{\pm}]f$.
Thinking of $g+\Zb[u^{\pm}]f$ as a continuous $\Cb$-valued function on $X_f$, we find that $g+\Zb[u^{\pm}]f$ has $L^2$-norm $1$ and is orthogonal to the constant functions with respect to the Haar probability measure. Thus $\alpha_f$ is not ergodic.
\end{remark}

We recall first the definition of mixing of all orders.

\begin{lemma} \label{L-mixing of all order}
Let $\Gamma$ be a countable group acting on a standard probability space $(X, \cB, \lambda)$ by measure-preserving transformations.
Let $n\in \Nb$ with $n\ge 2$. Let $s_1,\dots, s_n\in \Gamma$.
The following conditions are equivalent:
\begin{enumerate}
\item for any $A_1, \dots, A_n\in \cB$, one has $\lambda\big(\bigcap_{j=1}^ns_j^{-1}A_j\big )\to \prod_{j=1}^n\lambda(A_j)$ as $s_j^{-1}s_k\to \infty$  for all $1\le j<k\le n$.

\item for any $f_1, \dots, f_n$ in the space $L^{\infty}_{\Cb}(X, \cB, \lambda)$ of essentially bounded $\Cb$-valued $\cB$-measurable functions on $X$, one has $\lambda\big(\prod_{j=1}^ns_j(f_j)\big)\to \prod_{j=1}^n \lambda(f_j)$
as $s_j^{-1}s_k\to \infty$  for all $1\le j<k\le n$.
\end{enumerate}
If furthermore $X$ is a compact metrizable space, $\cB$ is the $\sigma$-algebra of Borel subsets of $X$, then the above conditions are also equivalent to
\begin{enumerate}
\item[(3)] for any $f_1, \dots, f_n$ in the space $C_\Cb(X)$ of $\Cb$-valued continuous functions on $X$, one has $\lambda\big(\prod_{j=1}^ns_j(f_j)\big)\to \prod_{j=1}^n \lambda(f_j)$
as $s_j^{-1}s_k\to \infty$ for all $1\le j<k\le n$.
\end{enumerate}
If furthermore $X$ is a compact metrizable abelian group, $\cB$ is the $\sigma$-algebra of Borel subsets of $X$, $\lambda$ is the Haar probability measure of $X$, and $\Gamma$ acts on $X$ by continuous automorphisms, then the above conditions are also equivalent to
\begin{enumerate}
\item[(4)] for any $f_1, \dots, f_n\in \widehat{X}$ not being all $0$, there is some finite subset $F$ of $\Gamma$ such that $\sum_{j=1}^ns_jf_j\neq 0$ for all $s_1, \dots, s_n\in \Gamma$ with
 $s_j^{-1}s_k\not\in F$ for all $1\le j<k\le n$.

\item[(5)] for any $f_1, \dots, f_n\in \widehat{X}$ with $f_1\neq 0$, there is some finite subset $F$ of $\Gamma$ such that $f_1+\sum_{j=2}^ns_jf_j\neq 0$ for all $s_2, \dots, s_n\in \Gamma$ with
 $s_j\not\in F$ for all $2\le j\le n$.
\end{enumerate}
\end{lemma}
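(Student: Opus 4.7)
The plan is to prove the chain $(1) \Leftrightarrow (2) \Leftrightarrow (3) \Leftrightarrow (4) \Leftrightarrow (5)$, bringing in the extra hypotheses as they become relevant. For $(1) \Leftrightarrow (2)$, the forward direction is by taking $f_j = \chi_{A_j}$; for the reverse, I would approximate each $f_j \in L^\infty_\Cb(X,\cB,\lambda)$ in $L^1(\lambda)$ by simple functions $\phi_j$ with $\|\phi_j\|_\infty \le \|f_j\|_\infty$, pass from indicator functions to simple functions by linearity, and control $|\lambda(\prod_j s_j(f_j)) - \lambda(\prod_j s_j(\phi_j))|$ by a telescoping estimate that uses the invariance of $\lambda$ and the uniform sup-norm bound. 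The step $(2) \Leftrightarrow (3)$ is the analogous density argument with $C_\Cb(X)$ in place of simple functions, exploiting the $L^1$-density of $C_\Cb(X)$ in $L^\infty_\Cb$ (valid since $\lambda$ is a regular Borel measure on a compact metrizable space) together with truncation to retain uniform sup-norms.

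For $(3) \Leftrightarrow (4)$ I would invoke Fourier analysis on the compact abelian group $X$. Since $\widehat{X}$ separates points, contains the trivial character, and is closed under conjugation via $\overline{\chi} = -\chi$, its $\Cb$-linear span is a self-adjoint unital point-separating subalgebra of $C_\Cb(X)$; by Stone--Weierstrass, trigonometric polynomials are uniformly dense in $C_\Cb(X)$, and by multilinearity together with the uniform sup-norm estimates, $(3)$ reduces to the case $f_j = \chi_j \in \widehat{X}$. For such a tuple, $\prod_j s_j(\chi_j)$ is itself the character $\sum_j s_j \chi_j \in \widehat{X}$ (additive notation), and orthogonality of characters gives $\int_X \chi\, d\lambda$ equal to $1$ if $\chi = 0$ and $0$ otherwise. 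The convergence in $(3)$ applied to characters then asserts exactly that $\sum_j s_j \chi_j \neq 0$ eventually whenever the $\chi_j$ are not all zero, which is $(4)$.

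Finally $(4) \Leftrightarrow (5)$. The direction $(5) \Rightarrow (4)$ is a relabeling: given $f_1, \dots, f_n$ not all zero, pick $j_0$ with $f_{j_0} \neq 0$, apply $(5)$ to the rearranged tuple with $f_{j_0}$ placed first to obtain a finite set $F'$, and take $F := F' \cup (F')^{-1}$; the identity $\sum_j t_j f_j = t_{j_0}\bigl(f_{j_0} + \sum_{j \neq j_0} (t_{j_0}^{-1} t_j)\, f_j\bigr)$ combined with the hypothesis $t_j^{-1} t_k \notin F$ for $j < k$ (which handles $j > j_0$ via $F'$ and $j < j_0$ via $(F')^{-1}$) forces the inner sum to be non-zero. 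The main obstacle is $(4) \Rightarrow (5)$, which I would prove by induction on $n$ (the case $n = 1$ being vacuous). If $(5)$ failed for some $n$-tuple with $f_1 \neq 0$, there would exist sequences $s_j^{(m)} \to \infty$ in $\Gamma$ for $j \geq 2$ with $f_1 + \sum_{j \geq 2} s_j^{(m)} f_j = 0$; applying $(4)$ with $s_1 := e$, the fact that each $s_k^{(m)} \to \infty$ rules out the $j=1$ constraint eventually, so some $(s_{j_0}^{(m)})^{-1} s_{k_0}^{(m)}$ with $2 \le j_0 < k_0 \le n$ must lie in the finite set $F'$ from $(4)$ for infinitely many $m$. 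Passing to a subsequence on which this ratio equals a constant $t \in F'$ and substituting $s_{k_0}^{(m)} = s_{j_0}^{(m)} t$ merges the $j_0$- and $k_0$-terms into an $(n-1)$-term identity $f_1 + \sum_{j \in \{2,\dots,n\} \setminus \{k_0\}} s_j^{(m)} f_j' = 0$, where $f_{j_0}' := f_{j_0} + t f_{k_0}$ and $f_j' := f_j$ otherwise; since $f_1$ is preserved as the distinguished non-zero first coefficient (as $j_0, k_0 \geq 2$) and the remaining $s_j^{(m)}$ still diverge, the inductive hypothesis on $(5)$ at level $n-1$ delivers the contradiction.
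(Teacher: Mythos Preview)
Your argument is correct and follows essentially the same route as the paper: density of simple functions for $(1)\Leftrightarrow(2)$, $L^p$-approximation by continuous functions for $(2)\Leftrightarrow(3)$, and Stone--Weierstrass plus orthogonality of characters for $(3)\Leftrightarrow(4)$. One small slip: in your $(1)\Leftrightarrow(2)$ paragraph the labels ``forward'' and ``reverse'' are swapped --- specializing to indicators gives $(2)\Rightarrow(1)$, and approximation by simple functions gives $(1)\Rightarrow(2)$ --- but the content is right.

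The one place you genuinely go beyond the paper is $(4)\Leftrightarrow(5)$, which the paper dismisses as ``obvious''. Your observation that $(4)\Rightarrow(5)$ is not immediate is well taken: condition $(5)$ only constrains the $s_j$ individually (for $j\ge 2$), not their pairwise ratios, so a direct specialization of $(4)$ with $s_1=e$ does not suffice. Your inductive reduction --- extracting a subsequence on which some ratio $(s_{j_0}^{(m)})^{-1}s_{k_0}^{(m)}$ is constant and collapsing two terms into one --- is the natural fix. To make the induction close cleanly you should note that $(4)$ at level $n$ implies $(4)$ at level $n-1$ (set $f_n=0$ and, given $s_1,\dots,s_{n-1}$ satisfying the pairwise constraint, use infiniteness of $\Gamma$ to choose $s_n$ outside the finite set $\bigcup_{j<n} s_j F$); this is what licenses invoking $(5)$ at level $n-1$ via the inductive hypothesis.
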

\begin{proof} (1)$\Longleftrightarrow$(2) follows from the observation that (1) is exactly (2) when $f_j$ is the characteristic function of $A_j$ and the fact that the linear span of characteristic functions of elements in $\cB$ is dense in $L^{\infty}_\Cb(X, \cB, \lambda)$ under the essential supremum  norm $\|\cdot \|_\infty$.

(2)$\Longleftrightarrow$(3) follows from the fact that for any $f\in L^{\infty}_{\Cb}(X, \cB, \lambda)$ and $\varepsilon>0$ there exists $g\in C_\Cb(X)$ with
$\|g\|_\infty\le \|f\|_\infty$ and $\|f-g\|_2<\varepsilon$.

We identify $\Rb/\Zb$ with the unit circle $\{z\in \Cb: |z|=1\}$ naturally. Then every $g\in \widehat{X}$ can be thought of as an element in $C_\Cb(X)$. Note that the identity element $0$ in $\widehat{X}$ is the element $1$ in $C_\Cb(X)$. Then (3)$\Longleftrightarrow$(4) follows from the observation that for any $g\in \widehat{X}$, $\lambda(g)=1$ or $0$ depending on whether $g=0$ in $\widehat{X}$ or not, and the fact that
the linear span of elements in $\widehat{X}$ is dense in $C_\Cb(X)$ under the supremum norm.

(4)$\Longleftrightarrow$(5) is obvious.
\end{proof}

When the condition (1) in Lemma~\ref{L-mixing of all order} is satisfied, we say that the action is {\it (left) mixing of order $n$}.
We say that the action is {\it (left) mixing of all orders} if it is mixing of order $n$ for all $n\in \Nb$ with $n\ge 2$.

\begin{proposition} \label{P-homoclinic to mixing}
Let a countable  group $\Gamma$ act on a compact metrizable abelian group $X$ by continuous automorphisms. Suppose that the homoclinic group $\Delta(X)$ is dense in $X$. Then the action is mixing of all orders with respect to the Haar probability measure of $X$.
\end{proposition}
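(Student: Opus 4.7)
The plan is to verify characterization~(5) of Lemma~\ref{L-mixing of all order}: for every $n\ge 2$ and every $f_1,\dots,f_n\in \widehat{X}$ with $f_1\neq 0$, there should be a finite $F\subset \Gamma$ such that $f_1+\sum_{j=2}^n s_jf_j\neq 0$ in $\widehat{X}$ whenever $s_j\in \Gamma\setminus F$ for $2\le j\le n$. I will argue by contradiction.

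Suppose no such $F$ exists. Exhausting $\Gamma$ by finite sets $F_1\subset F_2\subset\cdots$ with $\bigcup_k F_k=\Gamma$, one obtains for each $k$ elements $s_2^{(k)},\dots,s_n^{(k)}\in \Gamma\setminus F_k$ satisfying
$$f_1+\sum_{j=2}^n s_j^{(k)}f_j=0 \quad \text{in } \widehat{X};$$
in particular $s_j^{(k)}\to\infty$ in $\Gamma$ (i.e.\ eventually leaves every finite set) as $k\to\infty$, for each $j\ge 2$. The key step is to evaluate this equality at an arbitrary homoclinic point $x\in \Delta(X)$. Viewing each $f_j$ as a continuous homomorphism $X\to\Rb/\Zb$ and writing the dual $\Gamma$-action as $(sf)(x)=f(s^{-1}x)$, the displayed identity becomes
$$f_1(x)+\sum_{j=2}^n f_j\big((s_j^{(k)})^{-1}x\big)=0.$$
Since $s_j^{(k)}\to\infty$ also forces $(s_j^{(k)})^{-1}\to\infty$, homoclinicity of $x$ gives $(s_j^{(k)})^{-1}x\to 0_X$, and continuity of $f_j$ then yields $f_j\big((s_j^{(k)})^{-1}x\big)\to 0$ for each $j\ge 2$.

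Passing to the limit as $k\to\infty$ therefore produces $f_1(x)=0$ for every $x\in \Delta(X)$. Density of $\Delta(X)$ in $X$ and continuity of $f_1$ force $f_1\equiv 0$ on $X$, i.e.\ $f_1=0$ in $\widehat{X}$, contradicting $f_1\neq 0$. This establishes condition~(5) of Lemma~\ref{L-mixing of all order} for every $n\ge 2$, so the action is mixing of all orders. The argument is essentially a soft one: the only real content is the conversion of an algebraic relation in the discrete dual group $\widehat{X}$ into a pointwise limit relation in $\Rb/\Zb$ via evaluation against a dense set of homoclinic points, and I do not anticipate any substantive obstacle beyond bookkeeping the inverses and the direction of the dual action.
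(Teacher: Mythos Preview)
Your proof is correct and follows essentially the same approach as the paper: both argue by contradiction against condition~(5) of Lemma~\ref{L-mixing of all order}, evaluate the vanishing dual relation at an arbitrary homoclinic point, use that $(s_j^{(k)})^{-1}x\to 0_X$ to kill the non-leading terms, and conclude $f_1=0$ from density of $\Delta(X)$.
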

\begin{proof}
We verify the condition (5) in Lemma~\ref{L-mixing of all order} by contradiction.
Let $n\in \Nb$ with $n\ge 2$, and $f_1, \dots, f_n\in \widehat{X}$ with $f_1\neq 0$.
Suppose that there is a sequence $\{(s_{m, 2}, \dots, s_{m, n})\}_{m\in \Nb}$ of $(n-1)$-tuples in $\Gamma$
such that $f_1+\sum_{j=2}^ns_{m,j} f_j=0$ for all $m\in \Nb$ and $s_{m, j}\to \infty$ as $m\to \infty$ for every $2\le j\le n$.

Let $x\in \Delta(X)$. Then $f_1(x)+\sum_{j=2}^nf_j(s_{m, j}^{-1}x)=(f_1+\sum_{j=2}^ns_{m,j}f_j)(x)=0$ in $\Rb/\Zb$ for all $m\in \Nb$.
Since $s_{m, j}^{-1}x$ converges to the identity element of $X$ as $m\to \infty$ for every $2\le j\le n$, we have $f_j(s_{m, j}^{-1}x)\to 0$ in $\Rb/\Zb$ as $m\to \infty$ for every $2\le j\le n$. It follows that $f_1(x)=0$ in $\Rb/\Zb$. Since $\Delta(X)$ is dense in $X$, we get $f_1=0$, a contradiction.
\end{proof}

\begin{example} \label{E-invertible in von Neumann}
For a countable group $\Gamma$, when $g\in \Zb \Gamma$
is invertible in the group von Neumann algebra $\cN\Gamma$, $\Delta(X_g)$ is dense in $X_g$ \cite[Lemma 5.4]{CL}, and hence by Proposition~\ref{P-homoclinic to mixing} the action $\alpha_g$ is mixing of all orders with respect to the Haar probability measure of $X_g$.
\end{example}

Let $\Gamma$ be a finitely generated infinite group. Let $\mu$ be a finitely supported symmetric probability measure on $\Gamma$ such that the support of $\mu$ generates $\Gamma$. We shall think of $\mu$ as an element in $\Rb \Gamma$. We endow $\Gamma$ with the word length associated to the support of $\mu$.

By the Cauchy-Schwarz inequality for any $s\in \Gamma$ and $n\in \Nb$ one has $(\mu^{2n})_s=(\mu^n\cdot (\mu^n)^*)_s\le (\mu^n\cdot (\mu^n)^*)_e=(\mu^{2n})_e$.
Also note that for any $s\in \Gamma$ and $n\in \Nb$ one has $(\mu^{2n+1})_s=(\mu^{2n}\cdot \mu)_s\le \|\mu^{2n}\|_\infty$. It follows that
$\sum_{k=0}^{\infty} (\mu^k)_e<+\infty$ if and only if $\sum_{k=0}^{\infty}\|\mu^k\|_\infty<+\infty$.

Now assume that $\Gamma$ is not virtually $\Zb$ or $\Zb^2$.
 By  a result of Varopoulos \cite{Varopoulos} \cite[Theorem 2.1]{Fu02}
\cite[Theorem 3.24]{Woess} one has $\sum_{k=0}^{\infty} (\mu^k)_e<+\infty$, and hence $\sum_{k=0}^{\infty}\|\mu^k\|_\infty<+\infty$.
Thus we have the element $\sum_{k=0}^\infty \mu^k$ in $\ell^\infty(\Gamma)$. Let $\varepsilon>0$. Take $m\in \Nb$ such that
$\sum_{k=m+1}^\infty\|\mu^k\|_\infty<\varepsilon$. For each $s\in \Gamma$ with word length at least $m+1$, one has
$$|(\sum_{k=0}^\infty \mu^k)_s|=|(\sum_{k=m+1}^\infty \mu^k)_s|\le \sum_{k=m+1}^\infty\|\mu^k\|_\infty<\varepsilon.$$
Therefore $\sum_{k=0}^\infty \mu^k$ lies in the space $C_0(\Gamma)$ of $\Rb$-valued functions on $\Gamma$ vanishing at $\infty$. Since the support of $\mu$ is symmetric and generates $\Gamma$, one has $(\sum_{k=0}^\infty \mu^k)_s>0$ for every $s\in \Gamma$.

Now let $f\in \Rb\Gamma$ be well-balanced.
Set $\mu=-(f-f_e)/f_e$. Then $\mu$ is a finitely supported symmetric probability measure on $\Gamma$ and the support of $\mu$ generates $\Gamma$. Set $\omega=f_e^{-1}\sum_{k=0}^\infty\mu^k\in C_0(\Gamma)$. We have $f=f_e(1-\mu)$, and hence
$$ f\omega=(1-\mu)\sum_{k=0}^\infty\mu^k=1$$
and
$$ \omega f=(\sum_{k=0}^\infty\mu^k)(1-\mu)=1.$$

Note that the space $C_0(\Gamma)$ is not closed under convolution. The above identities show that $\omega$ is a formal inverse of $f$ in $C_0(\Gamma)$. Now we show that $f$ has no other formal inverse in $C_0(\Gamma)$.

\begin{lemma} \label{L-associative}
Let $g\in C_0(\Gamma)$ such that $gf\in \ell^1(\Gamma)$. Then
$$ (gf)\omega=g.$$
\end{lemma}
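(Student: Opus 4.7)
My plan is to show that the difference $c := (gf)\omega - g$ lies in $C_0(\Gamma)$ and is annihilated by $f$ on the right, and then to use transience of the random walk driven by $\mu$ to conclude $c = 0$.

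Set $h := gf \in \ell^1(\Gamma)$. Since $\omega \in C_0(\Gamma) \subset \ell^\infty(\Gamma)$, the convolution $h\omega$ makes sense, and a routine approximation argument shows that the convolution of an $\ell^1$ element with a $C_0$ element lies in $C_0$; thus $c \in C_0(\Gamma)$. I would then verify $(h\omega)f = h$ by direct computation: because $f$ has finite support, the sum $((h\omega)f)_v = \sum_u (h\omega)_u f_{u^{-1}v}$ involves only finitely many $u$ for each $v \in \Gamma$, so interchanging this finite sum with the absolutely convergent sum defining $(h\omega)_u = \sum_t h_t \omega_{t^{-1}u}$ reduces matters to $\sum_t h_t (\omega f)_{t^{-1}v}$, which collapses to $h_v$ via the already-established identity $\omega f = 1$. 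Consequently $cf = (h\omega)f - gf = h - h = 0$.

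Writing $f = f_e(1 - \mu)$ converts $cf = 0$ into $c = c\mu$, and iterating gives $c = c\mu^k$ for every $k \ge 0$. Using symmetry of $\mu$, this unfolds to $c_w = \sum_{s \in \Gamma} c_{ws} \mu^k_s$ for all $w \in \Gamma$ and $k \ge 0$. By transience --- a consequence of the assumption that $\Gamma$ is not virtually $\Z$ or $\Z^2$, as recalled just before the lemma --- one has $\mu^k_s \to 0$ for every fixed $s \in \Gamma$. Given $\varepsilon > 0$, choose a finite $F \subset \Gamma$ with $|c_{ws}| < \varepsilon$ for $s \notin F$ (possible since $c \in C_0$); then $|c_w| \le \|c\|_\infty \sum_{s \in F} \mu^k_s + \varepsilon$, whose limsup as $k \to \infty$ is at most $\varepsilon$. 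Since $\varepsilon$ was arbitrary, $c_w = 0$, so $c \equiv 0$ and the identity $(gf)\omega = g$ follows.

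The main obstacle is the final step --- uniqueness of $C_0$-solutions to $c = c\mu$ --- which is where transience of the walk, and hence the hypothesis that $\Gamma$ is not virtually $\Z$ or $\Z^2$, enters decisively. Everything else is bookkeeping manipulation of convolutions enabled by the finite support of $f$ together with the $\ell^1$ hypothesis on $gf$.
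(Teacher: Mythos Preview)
Your argument is correct, and it takes a genuinely different route from the paper's own proof. The paper proceeds by truncating $\omega$: writing $\omega=\lim_m f_e^{-1}\sum_{k=0}^m\mu^k$ in $\ell^\infty(\Gamma)$, using associativity for finite-support elements to get $(gf)\omega = g-\lim_m g\mu^{m+1}$, and then showing $\|g\mu^{m+1}\|_\infty\to 0$ by the same finite-support/small-tail splitting you use at the end. Your approach instead first establishes the ``dual'' associativity $(h\omega)f=h$ (a legitimate Fubini step since $f$ has finite support and $h\in\ell^1$), then reduces to a Liouville-type uniqueness statement: the only $c\in C_0(\Gamma)$ with $c\mu=c$ is $c=0$. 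The analytic core of both arguments is the same estimate, driven by $\|\mu^k\|_\infty\to 0$ from transience; the paper packages it as a direct computation of $(gf)\omega$, while you package it as a uniqueness principle for $\mu$-harmonic functions vanishing at infinity. Your formulation has the minor advantage of isolating that uniqueness statement cleanly (and indeed it is essentially what is used again in Corollary~\ref{C-inverse}), whereas the paper's truncation avoids the separate verification that $h\omega\in C_0(\Gamma)$ and that $(h\omega)f=h$.
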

\begin{proof} In the Banach space $\ell^\infty(\Gamma)$ we have
\begin{align*}
(gf) \omega &= \lim_{m\to \infty}(gf)(f_e^{-1}\sum_{k=0}^m\mu^k) = \lim_{m\to \infty}g(f f_e^{-1}\sum_{k=0}^m\mu^k) = \lim_{m\to \infty}g((1-\mu)\sum_{k=0}^m\mu^k) \\
&= \lim_{m\to \infty}g(1-\mu^{m+1}) = g-\lim_{m\to \infty}g\mu^{m+1}.
\end{align*}
Let $\varepsilon>0$. Take a finite set $F\subset\Gamma$ such that $\|g|_{\Gamma \setminus F}\|_\infty<\varepsilon$. Write $g$ as $u+v$ for $u, v\in \ell^\infty(\Gamma)$ such that $u$ has support contained in $F$ and $v$ has support contained in $\Gamma\setminus F$. For each $m\in \Nb$ we have
\begin{align*}
\|g\mu^{m+1}\|_\infty &\le \|u\mu^{m+1}\|_\infty+\|v\mu^{m+1}\|_\infty \le \|u\|_1\|\mu^{m+1}\|_\infty+\|v\|_\infty\|\mu^{m+1}\|_1\\
&\le \|g\|_\infty \cdot |F|\cdot \|\mu^{m+1}\|_\infty+ \varepsilon.
\end{align*}
Letting $m\to \infty$, we get $\limsup_{m\to \infty}\|g\mu^{m+1}\|_\infty\le \varepsilon$. Since $\varepsilon$ is an arbitrary positive number, we get
$\limsup_{m\to \infty}\|g\mu^{m+1}\|_\infty=0$ and hence $\lim_{m\to \infty}g\mu^{m+1}=0$. It follows that $(gf)\omega=g$ as desired.
\end{proof}

\begin{corollary} \label{C-inverse}
Let $g\in C_0(\Gamma)$ such that $gf=1$. Then $g=\omega$.
\end{corollary}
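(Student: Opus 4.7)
The plan is to observe that this corollary is an immediate consequence of Lemma~\ref{L-associative}. Indeed, the element $1 \in \Zb\Gamma$ (which is $1$ at $e$ and $0$ elsewhere) lies in $\ell^1(\Gamma)$, so the hypothesis $gf = 1$ automatically guarantees that $gf \in \ell^1(\Gamma)$, which is exactly the hypothesis required to invoke Lemma~\ref{L-associative}.

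The single step I would carry out is: apply Lemma~\ref{L-associative} to conclude $g = (gf)\omega = 1 \cdot \omega = \omega$, where the last equality uses that $1$ is the convolution identity in $\ell^\infty(\Gamma)$ (so $1 \cdot h = h$ for every $h \in \ell^\infty(\Gamma)$, in particular for $h = \omega$). There is no real obstacle here; the content has already been absorbed into Lemma~\ref{L-associative}, which is what justified calling $\omega$ a formal inverse in the first place. The corollary merely packages uniqueness of such a one-sided formal inverse within $C_0(\Gamma)$, and will be used later to identify a candidate inverse of $f$ with $\omega$ whenever one is produced inside $C_0(\Gamma)$.
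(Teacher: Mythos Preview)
Your proposal is correct and matches the paper's proof essentially verbatim: the paper simply writes $g = (gf)\omega = \omega$, citing Lemma~\ref{L-associative}. Your added remarks that $1 \in \ell^1(\Gamma)$ (so the lemma applies) and that $1$ acts as the convolution identity are the only elaborations beyond the paper's one-line argument.
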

\begin{proof} By Lemma~\ref{L-associative} we have
$$ g=(gf)\omega=\omega.$$
\end{proof}

Denote by $Q$ the natural quotient map $\ell^\infty(\Gamma)\rightarrow (\Rb/\Zb)^\Gamma$. We assume furthermore that  $f\in \Zb\Gamma$.

\begin{lemma} \label{L-homoclinic group}
We have
$$ \Delta(X_f)=Q(\Zb\Gamma \omega).$$
\end{lemma}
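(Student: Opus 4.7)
The plan is to prove the two inclusions $Q(\Z\Gamma\omega) \subseteq \Delta(X_f)$ and $\Delta(X_f) \subseteq Q(\Z\Gamma\omega)$ separately, using the formal inverse identities $f\omega = \omega f = 1$ together with Lemma~\ref{L-associative} as the key algebraic tools, and the fact that $\omega \in C_0(\Gamma)$ as the source of the decay at infinity needed for homoclinicity.

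For the forward inclusion, fix $h \in \Z\Gamma$. Since $\omega \in C_0(\Gamma)$ and $h$ has finite support, the convolution $h\omega$ is a finite $\R$-linear combination of left translates of $\omega$, so $h\omega \in C_0(\Gamma)$. First I would verify $Q(h\omega) \in X_f$: associativity of convolution (valid because $h$ and $f$ each have finite support) together with well-balancedness gives $(h\omega)f^* = (h\omega)f = h(\omega f) = h \in \Z\Gamma$, so $Q(h\omega)f^* = 0$ in $(\R/\Z)^\Gamma$. Second, the homoclinicity of $Q(h\omega)$ is immediate: for each fixed $t \in \Gamma$, the $t$-coordinate of $s \cdot Q(h\omega)$ equals $Q(h\omega)_{s^{-1}t}$, which tends to $0$ as $s \to \infty$ since $h\omega$ vanishes at infinity.

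For the reverse inclusion, I would construct an explicit preimage. Given $x \in \Delta(X_f)$, lift $x$ coordinatewise by choosing for each $g \in \Gamma$ the unique representative $\tilde{x}_g \in (-1/2, 1/2]$ with $\tilde{x}_g + \Z = x_g$, so $Q(\tilde{x}) = x$. Homoclinicity of $x$ gives $x_g \to 0$ in $\R/\Z$ as $g \to \infty$ (evaluate $sx \to 0$ at the coordinate $e$), hence $\tilde{x}_g \to 0$ in $\R$, i.e., $\tilde{x} \in C_0(\Gamma)$. The convolution $\tilde{x}f$ is pointwise well-defined since $f$ has finite support, lies in $C_0(\Gamma)$ because $(\tilde{x}f)_w = \sum_{s \in \supp f} \tilde{x}_{ws^{-1}} f_s$ with each $\tilde{x}_{ws^{-1}} \to 0$ as $w \to \infty$, and lies in $\Z^\Gamma$ because $x \in X_f$ combined with $f = f^*$ forces $\tilde{x}f = \tilde{x}f^* \in \Z^\Gamma$. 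Therefore $h := \tilde{x}f$ lies in $C_0(\Gamma) \cap \Z^\Gamma = \Z\Gamma$. Applying Lemma~\ref{L-associative} to $g = \tilde{x}$, with $\tilde{x}f = h \in \Z\Gamma \subset \ell^1(\Gamma)$, yields $h\omega = (\tilde{x}f)\omega = \tilde{x}$, so $x = Q(\tilde{x}) = Q(h\omega) \in Q(\Z\Gamma\omega)$.

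The critical technical bridge is that $\tilde{x}f$ simultaneously lies in $C_0(\Gamma)$ and in $\Z^\Gamma$, so that it collapses to a finitely supported integer-valued element eligible as the input $h \in \Z\Gamma$ for Lemma~\ref{L-associative}. This rests on three independent ingredients: the finite support of $f$, the $C_0$-property of the canonical $(-1/2,1/2]$-valued lift of a homoclinic point, and the identity $f = f^*$ from well-balancedness which converts the constraint $xf^* = 0$ into $\tilde{x}f \in \Z^\Gamma$. None of these ingredients appears to be avoidable.
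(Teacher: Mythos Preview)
Your proof is correct and follows essentially the same approach as the paper's own proof: both directions are argued exactly as you do, using $h\omega \in C_0(\Gamma)$ and $(h\omega)f \in \Z\Gamma$ for the inclusion $Q(\Z\Gamma\omega)\subseteq\Delta(X_f)$, and lifting a homoclinic point to $\tilde{x}\in C_0(\Gamma)$, observing $\tilde{x}f \in C_0(\Gamma)\cap\ell^\infty(\Gamma,\Z)=\Z\Gamma$, and invoking Lemma~\ref{L-associative} for the reverse inclusion. Your write-up simply makes explicit a few details (the role of $f=f^*$, the specific choice of lift, why $h\omega\in C_0(\Gamma)$) that the paper leaves implicit.
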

\begin{proof} Let $h\in \Zb\Gamma$. Then
$$ (h\omega)f=h(\omega f)=h\in \Zb\Gamma,$$
and hence $Q(h\omega)\in X_f$. Since $h\omega \in C_0(\Gamma)$, one has $Q(h\omega)\in \Delta(X_f)$. Thus $Q(\Zb\Gamma \omega)\subset\Delta(X_f)$.

Now let $x\in \Delta(X_f)$. Take $\tilde{x}\in C_0(\Gamma)$ such that $Q(\tilde{x})=x$. Then $\tilde{x} f\in C_0(\Gamma)\cap \ell^\infty(\Gamma, \Zb)=\Zb\Gamma$.
Set $h=\tilde{x}f\in \Zb\Gamma$.
By Lemma~\ref{L-associative} we have $\tilde{x}=h\omega$. Therefore
$x=Q(\tilde{x})=Q(h\omega)$, and hence $\Delta(X_f)\subset Q(\Zb\Gamma \omega)$ as desired.
\end{proof}

\begin{corollary} \label{C-module}
As a left $\Zb\Gamma$-module, $\Delta(X_f)$ is isomorphic to $\Zb\Gamma/\Zb\Gamma f$.
\end{corollary}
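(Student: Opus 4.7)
My plan is to build an explicit isomorphism $\Zb\Gamma/\Zb\Gamma f \to \Delta(X_f)$ using the formal inverse $\omega \in C_0(\Gamma)$ constructed before Lemma~\ref{L-associative}. Define
$$\Phi : \Zb\Gamma \longrightarrow \Delta(X_f), \qquad \Phi(h) = Q(h\omega).$$
The output lies in $\Delta(X_f)$ because $h \in \Zb\Gamma$ is finitely supported and $\omega \in C_0(\Gamma)$, so $h\omega \in C_0(\Gamma)$, and Lemma~\ref{L-homoclinic group} already tells us $Q(h\omega) \in \Delta(X_f)$. Furthermore, that same lemma gives surjectivity of $\Phi$.

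Next I would verify $\Phi$ is a homomorphism of left $\Zb\Gamma$-modules. The $\Zb$-linearity is immediate. For the $\Gamma$-equivariance, recall from Section~\ref{S-notation} that the action of $s \in \Gamma$ on $x \in (\Rb/\Zb)^\Gamma$ is $(sx)_t = x_{s^{-1}t}$, so that the extension to $\Zb\Gamma$ acts on $x$ by left convolution, $g \cdot x = gx$. Hence for $g, h \in \Zb\Gamma$, associativity of the convolution product $\Zb\Gamma \times \Zb\Gamma \times \ell^\infty(\Gamma) \to \ell^\infty(\Gamma)$ (which holds by Fubini since the $\Zb\Gamma$ factors are finitely supported) gives
$$g \cdot \Phi(h) = g \cdot Q(h\omega) = Q(g(h\omega)) = Q((gh)\omega) = \Phi(gh),$$
so $\Phi$ is $\Zb\Gamma$-linear.

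The main computation is the identification of $\ker \Phi$ with the left ideal $\Zb\Gamma f$. If $h = kf$ with $k \in \Zb\Gamma$, then by the same associativity and the identity $f\omega = 1$,
$$h\omega = (kf)\omega = k(f\omega) = k \in \Zb\Gamma,$$
so $Q(h\omega) = 0$. Conversely, if $\Phi(h) = 0$, then $h\omega \in C_0(\Gamma) \cap \Zb^\Gamma$, which is exactly $\Zb\Gamma$ (an integer-valued function vanishing at infinity has finite support). Writing $k := h\omega \in \Zb\Gamma$, right-multiplication by $f$ together with Lemma~\ref{L-associative} applied to $g = h\omega$ (noting $gf = h\omega f = h \in \ell^1(\Gamma)$) gives
$$h = (h\omega)f \cdot \omega \cdot f = \dots$$
Actually more directly: $kf = (h\omega)f = h(\omega f) = h \cdot 1 = h$, using associativity once more. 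Thus $h \in \Zb\Gamma f$, so $\ker \Phi = \Zb\Gamma f$.

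No step should be a serious obstacle; the only thing to be a bit careful about is invoking the associativity of convolution in the mixed $\ell^1$--$\ell^\infty$ setting, but with one of the three factors finitely supported this is a routine Fubini argument. By the first isomorphism theorem, $\Phi$ descends to a $\Zb\Gamma$-module isomorphism $\Zb\Gamma/\Zb\Gamma f \cong \Delta(X_f)$, which is the claim.
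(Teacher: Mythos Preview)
Your proof is correct and follows essentially the same approach as the paper: define $\Phi(h)=Q(h\omega)$, use Lemma~\ref{L-homoclinic group} for surjectivity, and identify $\ker\Phi$ with $\Zb\Gamma f$ via the identities $f\omega=1$ and $\omega f=1$ together with associativity of convolution. The only difference is that you explicitly verify $\Zb\Gamma$-linearity of $\Phi$, which the paper simply asserts; your mid-proof detour invoking Lemma~\ref{L-associative} before switching to the direct associativity argument $(h\omega)f=h(\omega f)=h$ is unnecessary but harmless.
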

\begin{proof} By Lemma~\ref{L-homoclinic group} we have the surjective left $\Zb\Gamma$-module map $\Phi: \Zb\Gamma\rightarrow \Delta(X_f)$ sending $h$ to $Q(h\omega)$. Then it suffices to show $\ker \Phi=\Zb\Gamma f$.

If $h\in \Zb\Gamma f$, say $h=gf$ for some $g\in \Zb\Gamma$, then
$$ Q(h\omega)=Q((gf)\omega)=Q(g(f\omega))=Q(g)=0.$$
Thus $\Zb\Gamma \subset\ker \Phi$.

Let $h\in \ker \Phi$. Then $h\omega \in C_0(\Gamma)\cap \ell^\infty(\Gamma, \Zb)=\Zb\Gamma$. Set $g=h\omega \in \Zb\Gamma$. Then
$$ gf=(h\omega)f=h(\omega f)=h.$$
Thus $\ker\Phi\subset\Zb\Gamma f$.
\end{proof}

We are ready to prove Theorem~\ref{T-dense homoclinic}.

\begin{proof}[Proof of Theorem~\ref{T-dense homoclinic}] The assertion (2) follows from the assertion (1) and Proposition~\ref{P-homoclinic to mixing}.

To prove the assertion (1), by Pontryagin duality it suffices to show any $\varphi \in \widehat{X_f}$ vanishing on $\Delta(X_f)$ is $0$. Thus let $\varphi \in \widehat{X_f}=\Zb\Gamma/\Zb\Gamma f$ vanishing on $\Delta(X_f)$. Say, $\varphi=g+\Zb\Gamma f$ for some $g\in \Zb\Gamma$. For each $h\in \Zb\Gamma$, in $\Rb/\Zb$ one has
$$ 0=\left< Q(h\omega), \varphi\right>\overset{\eqref{E-pairing}}=((h\omega) g^*)_e+\Zb=(h(\omega g^*))_e+\Zb,$$
and hence $(h(\omega g^*))_e\in \Zb$. Taking $h=s$ for all $s\in \Gamma$, we conclude that $\omega g^*\in C_0(\Gamma)\cap \ell^\infty(\Gamma, \Zb)=\Zb\Gamma$.
Set $v=\omega g^*\in \Zb\Gamma$. Then
$$ fv=f(\omega g^*)=(f\omega)g^*=g^*,$$
and hence
$$ g=v^*f^*=v^*f\in \Zb\Gamma f.$$
Therefore $\varphi=g+\Zb\Gamma f=0$ as desired.
\end{proof}

\section{Periodic Points} \label{S-periodic}

Throughout this section we let $\Gamma$ be a finitely generated residually finite infinite group, and let $\Sigma=\{\Gamma_n\}^\infty_{n=1}$ be a sequence of finite-index normal subgroups of $\Gamma$ such that $\bigcap_{n\in \Nb}\bigcup_{i\ge n}\Gamma_i=\{e\}$.

For a compact metric space $(X, \rho)$, recall that the Hausdorff distance between two nonempty closed subsets $Y$ and $Z$ of $X$ is defined as
$$ {\rm dist_H}(Y, Z):=\max(\max_{y\in Y}\min_{z\in Z}\rho(y, z), \max_{z\in Z}\min_{y\in Y}\rho(z, y)).$$

For $f\in \Zb\Gamma$ we denote by $\Fix_{\Gamma_n}(X_f)$ the group of fixed points of $\Gamma_n$ in $X_f$.
The main result of this section is

\begin{theorem} \label{T-dense periodic points}
Let $f\in \Zb\Gamma$ be well-balanced.
Let $\rho$ be a
compatible metric on $X_f$. We have $\Fix_{\Gamma_n}(X_f)\rightarrow X_f$ under the Hausdorff distance when $n\to \infty$.
\end{theorem}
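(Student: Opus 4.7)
The plan is to combine the density of the homoclinic subgroup (Theorem~\ref{T-dense homoclinic}) with a Green's-function periodization on the finite Cayley graphs $C_n^f$. I begin by fixing a compatible translation-invariant metric on $(\R/\Z)^\Gamma$ of the form $\rho(x,y) = \sum_{s\in\Gamma} c_s \|x_s - y_s\|_{\R/\Z}$ with positive summable weights $c_s$. Since $\Fix_{\Gamma_n}(X_f) \subseteq X_f$ always, Hausdorff convergence reduces to showing that for every $\varepsilon > 0$ and every finite $K \subset \Gamma$ there is $N$ so that for all $n \ge N$ and all $y \in X_f$ one can find $z_n \in \Fix_{\Gamma_n}(X_f)$ with $\sup_{s \in K} \|z_n(s) - y(s)\|_{\R/\Z} < \varepsilon$; uniformity in $y$ follows from compactness of $X_f$ via an $\varepsilon$-net argument.

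Next I approximate $y$ by a homoclinic point. Assuming $\Gamma$ is not virtually $\Z$ or $\Z^2$, Theorem~\ref{T-dense homoclinic} and the description $\Delta(X_f) = Q(\Z\Gamma\omega)$ from Lemma~\ref{L-homoclinic group} produce $h \in \Z\Gamma$ with $\rho(Q(h\omega), y) < \varepsilon/2$, where $\omega \in C_0(\Gamma)$ is the formal inverse of $f$ satisfying $f\omega = \omega f = 1$. For $n$ large pick $\bar t_n \in \Gamma/\Gamma_n$ lying at word-distance tending to infinity from the image of $K \cup \supp(h)$ (possible since $[\Gamma : \Gamma_n] \to \infty$ forces the diameter of $C_n^f$ to $\infty$), and replace $\bar h \in \Z[\Gamma/\Gamma_n]$ by the augmentation-zero element $\bar h_n' := \bar h - (\sum_s h_s)\,\delta_{\bar t_n}$. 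The virtually $\Z$ and virtually $\Z^2$ cases, which are excluded from Theorem~\ref{T-dense homoclinic}, must be handled directly from the explicit structure of $X_f$ (for example using Remark~\ref{R-integer not ergodic} in the $\Z$ case and \cite[Theorem~7.2]{KS} in the $\Z^2$ case); I omit those from this sketch.

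I build the periodic approximant via a pseudo-inverse of $\bar f$. For $n$ large $C_n^f$ is connected, so the Laplacian $\bar f \in \Z[\Gamma/\Gamma_n]$ has $1$-dimensional kernel $\R\mathbf 1$. Let $\bar\omega_n \in \R[\Gamma/\Gamma_n]$ be the pseudo-inverse of $\bar f$ that vanishes on constants, characterized by $\bar\omega_n \bar f = P$ where $P = \delta_e - [\Gamma:\Gamma_n]^{-1}\sum_{\bar s}\delta_{\bar s}$ represents the projection onto the sum-zero subspace. Set $\tilde z_n := \bar h_n' \bar\omega_n$ and $z_n := Q(\tilde z_n)$, viewed as a $\Gamma_n$-periodic element of $(\R/\Z)^\Gamma$. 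Because $\bar h_n'$ is sum-zero, $\tilde z_n \bar f = \bar h_n' P = \bar h_n' \in \Z[\Gamma/\Gamma_n]$, so $z_n \bar f = 0$ in $(\R/\Z)[\Gamma/\Gamma_n]$, and $z_n \in \Fix_{\Gamma_n}(X_f)$.

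The remaining step is pointwise convergence $\tilde z_n(\bar s) \to (h\omega)(s)$ for $s \in K$, which by the triangle inequality gives $\rho(z_n, y) < \varepsilon$. Splitting $\tilde z_n = \bar h\bar\omega_n - (\sum_s h_s)\bar\omega_n(\bar t_n^{-1}\,\cdot)$, the correction term $-(\sum_s h_s)\bar\omega_n(\bar t_n^{-1}\bar s)$ tends to $0$ because the Green's function on $C_n^f$ asymptotically mimics $\omega \in C_0(\Gamma)$ and $\bar t_n^{-1}\bar s$ recedes to infinity by choice of $\bar t_n$. The main part $\bar h\bar\omega_n(\bar s) \to (h\omega)(s)$ reduces to pointwise convergence $\bar\omega_n(\bar s) \to \omega(s)$. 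Writing $\bar\omega_n = f_e^{-1}\sum_{k \ge 0}(\bar\mu^k - [\Gamma:\Gamma_n]^{-1}\mathbf 1)$---a series convergent by the spectral gap of $\bar\mu$ on the sum-zero subspace---termwise convergence is immediate from the local combinatorial identification $\bar\mu^k(\bar s) = \mu^k(s)$ valid for $k$ bounded and $n$ large. The chief technical obstacle, which I expect to resolve by comparing return probabilities on $\Gamma/\Gamma_n$ with those on $\Gamma$ (random walks on quotients have dominated return probabilities after centering by the stationary distribution) together with the Varopoulos transience estimate established in Section~\ref{S-homoclinic}, is to make this termwise convergence uniform in $n$ by controlling the tail $\sum_{k > K_0}|\bar\mu^k(\bar s) - [\Gamma:\Gamma_n]^{-1}|$ by a summable dominating sequence.
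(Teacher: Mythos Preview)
Your strategy is genuinely different from the paper's. The paper argues on the dual side: using compactness of the Hausdorff metric, pass to a subsequential limit $Y\subset X_f$, observe $Y$ is a closed subgroup, and show that any character $g+\Zb\Gamma f\in\widehat{X_f}$ vanishing on $Y$ must be trivial. This reduces (via Lemma~\ref{L-vanishing}) to the purely algebraic statement that if $\pi_n(g)\in\Zb(\Gamma/\Gamma_n)\pi_n(f)$ for all $n$ then $g\in\Zb\Gamma f$, which the paper proves by a maximum-principle bound (Lemma~\ref{L-bounded}) together with a combinatorial connectivity lemma about complements of balls in the finite Cayley graphs (Lemma~\ref{L-connectedness in Cayley graphs}). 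No homoclinic points, no Green's functions, and crucially no case split: the argument is uniform over all finitely generated infinite $\Gamma$, including the virtually $\Zb$ and virtually $\Zb^2$ cases.

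Your primal approach---approximate by a homoclinic point $Q(h\omega)$ and then periodize via the finite pseudo-inverse $\bar\omega_n$---is natural, but the two gaps you flag are real and not minor. First, excluding the virtually $\Zb$ and $\Zb^2$ cases leaves genuine work: Theorem~\ref{T-dense homoclinic} is simply unavailable there, and neither Remark~\ref{R-integer not ergodic} nor \cite[Theorem~7.2]{KS} directly furnishes the uniform approximation you need. Second, and more seriously, the tail control $\sum_{k>K_0}|\bar\mu^k(\bar s)-[\Gamma:\Gamma_n]^{-1}|$ uniform in $n$ is exactly where the difficulty lives. In the amenable case the spectral gap of $\bar\mu$ on $\ell^2_0(\Gamma/\Gamma_n)$ collapses as $n\to\infty$, so the series representation of $\bar\omega_n$ has no uniform geometric decay; the domination-by-$\Gamma$ heuristic you invoke does not immediately give a summable majorant because $\bar\mu^k(\bar s)\ge\mu^k(s)$ goes the wrong way, and the Varopoulos bound on $\Gamma$ says nothing about the rate at which $\bar\mu^k(\bar s)$ approaches $[\Gamma:\Gamma_n]^{-1}$. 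The same uniform control is needed again for your correction term $\bar\omega_n(\bar t_n^{-1}\bar s)\to 0$. This convergence of finite Green's functions to the infinite one can likely be extracted from L\"uck-type approximation arguments, but it is a substantial result in its own right, comparable in depth to the theorem you are trying to prove. The paper's dual argument sidesteps all of this.
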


Denote by $\pi_n$ the natural ring homomorphism $\Rb\Gamma \rightarrow \Rb(\Gamma/\Gamma_n)$. Let $X_{\pi_n(f)}$ be $\widehat{\Zb(\Gamma/\Gamma_n)/\Zb(\Gamma/\Gamma_n)\pi_n(f)}$, which is the additive group of all maps $x:\Gamma/\Gamma_n \to \R/\Z$ satisfying $x\pi_n(f) =0$, i.e., $\sum_{s\in \Gamma} x_{ts\Gamma_n} f_{s^{-1}} = \Z$ for every $t\in \Gamma$.


\begin{lemma} \label{L-vanishing}
Let $f\in \Zb\Gamma$ and  $\varphi=g+\Zb\Gamma f\in \Zb\Gamma/\Zb\Gamma f$ for some $g\in \Zb\Gamma$. Let  $n\in \Nb$. Then $\varphi$ vanishes on $\Fix_{\Gamma_n}(X_f)$ if and only if $\pi_n(g)\in \Zb(\Gamma/\Gamma_n)\pi_n(f)$.
\end{lemma}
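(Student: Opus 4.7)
The plan is to unwind the definitions, identify $\Fix_{\Gamma_n}(X_f)$ with the Pontryagin dual of $\Zb(\Gamma/\Gamma_n)/\Zb(\Gamma/\Gamma_n)\pi_n(f)$, and then conclude by Pontryagin duality.

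First I would verify the identification $\Fix_{\Gamma_n}(X_f)=X_{\pi_n(f)}$. Recall $X_f\subset (\Rb/\Zb)^{\Gamma}$ consists of those $x$ with $xf^*=0$, and the $\Gamma$-action is $(sx)_t=x_{s^{-1}t}$. A point $x$ is fixed by $\Gamma_n$ iff $x_{s^{-1}t}=x_t$ for all $s\in\Gamma_n$ and $t\in\Gamma$, i.e.\ iff $x$ is constant on the cosets $\Gamma_n t=t\Gamma_n$ (using normality). Thus $x$ descends to $\tilde x\colon \Gamma/\Gamma_n\to \Rb/\Zb$, and the relation $xf^*=0$ becomes $\tilde x\,\pi_n(f)^*=0$, so $\tilde x\in X_{\pi_n(f)}$. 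Conversely, every $\tilde x\in X_{\pi_n(f)}$ lifts to a $\Gamma_n$-fixed element of $X_f$. This is essentially \cite[Lemma 7.4]{KL11a}.

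Next I would transfer the pairing. For $x\in\Fix_{\Gamma_n}(X_f)$ with descent $\tilde x$, and for $g\in\Zb\Gamma$, the identification is compatible in the sense that
\[
\bigl\langle x,\,g+\Zb\Gamma f\bigr\rangle\;\overset{\eqref{E-pairing}}{=}\;(xg^*)_e\;=\;\bigl(\tilde x\,\pi_n(g^*)\bigr)_{e\Gamma_n}\;=\;\bigl\langle \tilde x,\,\pi_n(g)+\Zb(\Gamma/\Gamma_n)\pi_n(f)\bigr\rangle,
\]
where on the right we have the analogous pairing between $X_{\pi_n(f)}$ and $\Zb(\Gamma/\Gamma_n)/\Zb(\Gamma/\Gamma_n)\pi_n(f)$. (The computation on the middle expression just uses that convolution on $\Gamma$ commutes with the quotient map $\pi_n$, together with the fact that $x$ is constant on cosets of $\Gamma_n$.)

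Consequently $\varphi=g+\Zb\Gamma f$ vanishes on $\Fix_{\Gamma_n}(X_f)$ if and only if the element $\pi_n(g)+\Zb(\Gamma/\Gamma_n)\pi_n(f)$ of $\Zb(\Gamma/\Gamma_n)/\Zb(\Gamma/\Gamma_n)\pi_n(f)=\widehat{X_{\pi_n(f)}}$ vanishes as a character on all of $X_{\pi_n(f)}$. By Pontryagin duality for the finite abelian group $\Zb(\Gamma/\Gamma_n)/\Zb(\Gamma/\Gamma_n)\pi_n(f)$, this happens if and only if $\pi_n(g)+\Zb(\Gamma/\Gamma_n)\pi_n(f)=0$, i.e.\ $\pi_n(g)\in \Zb(\Gamma/\Gamma_n)\pi_n(f)$, proving the equivalence. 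There is no real obstacle here beyond keeping the identifications straight; the statement is essentially a bookkeeping consequence of Pontryagin duality applied to a finite quotient.
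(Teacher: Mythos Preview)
Your argument is correct and follows essentially the same route as the paper's own proof: identify $\Fix_{\Gamma_n}(X_f)$ with $X_{\pi_n(f)}$ via \cite[Lemma~7.4]{KL11a}, check that the pairing on $X_f$ restricts to the pairing on $X_{\pi_n(f)}$ under this identification, and conclude by Pontryagin duality.

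One small correction: the group $\Zb(\Gamma/\Gamma_n)/\Zb(\Gamma/\Gamma_n)\pi_n(f)$ is in general \emph{not} finite (for instance, when $f$ is well-balanced, $\pi_n(f)$ kills the constants, so the quotient has a $\Zb$-summand). This does not affect your argument, since Pontryagin duality for the discrete abelian group $\Zb(\Gamma/\Gamma_n)/\Zb(\Gamma/\Gamma_n)\pi_n(f)$ and its compact dual $X_{\pi_n(f)}$ already gives that a character vanishes identically iff it is zero; just drop the word ``finite''.
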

\begin{proof}
By \cite[Lemma 7.4]{KL11a} we have a compact group isomorphism $\Phi_n:X_{\pi_n(f)} \rightarrow \Fix_{\Gamma_n}(X_f)$ defined by $(\Phi_n(x))_s=x_{s\Gamma_n}$ for all $x\in X_{\pi_n(f)}$ and $s\in \Gamma$.

For each $x\in X_{\pi_n(f)}$, in $\Rb/\Zb$ we have
\begin{align*}
\left<\Phi_n(x), \varphi\right>&\overset{\eqref{E-pairing}}=(\Phi_n(x)g^*)_e =\sum_{s\in \Gamma} (\Phi_n(x))_sg_s  =\sum_{s\in \Gamma} x_{s\Gamma_n}g_s \\
&= \sum_{s\Gamma_n \in \Gamma/\Gamma_n} x_{s\Gamma_n}\pi_n(g)_{s\Gamma_n} =\left<x, \pi_n(g)+\Zb(\Gamma/\Gamma_n)\pi_n(f)\right>.
\end{align*}
Thus $\varphi$ vanishes on $\Fix_{\Gamma_n}(X_f)$ iff the element $\pi_n(g)+\Zb(\Gamma/\Gamma_n)\pi_n(f)$ in $\Zb(\Gamma/\Gamma_n)/\Zb(\Gamma/\Gamma_n)\pi_n(f)$ vanishes on $X_{\pi_n(f)}$, iff $\pi_n(g)\in \Zb(\Gamma/\Gamma_n)\pi_n(f)$.
\end{proof}

For the next lemma, recall that if $S \subset \Gamma \setminus \{e\}$ is a symmetric finite generating set then $C(\Gamma,S)$, the Cayley graph of $\Gamma$ with respect to $S$, has vertex set $\Gamma$ and edge set  $\{ \{g,gs\} \}_{g\in \Gamma,s\in S}$. Similarly, $C(\Gamma/\Gamma_n, \pi_n(S))$ has vertex set $\Gamma/\Gamma_n$ and edge set $\{\{g\Gamma_n, gs\Gamma_n\}:~g\in \Gamma, s\in S\}$. A subset $A \subset \Gamma$ is identified with the induced subgraph of $C(\Gamma,S)$ which has vertex set $A$ and contains every edge of $C(\Gamma,S)$ with endpoints in $A$. Similarly, a subset $A \subset \Gamma/\Gamma_n$ induces a subgraph of $C(\Gamma/\Gamma_n,\pi_n(S))$. Thus we say that a subset $A \subset \Gamma$ (or $A \subset \Gamma/\Gamma_n$) is {\em connected} if its induced subgraph is connected.

\begin{lemma} \label{L-connectedness in Cayley graphs}
Let $S\subset \Gamma\setminus \{e\}$ be a finite symmetric generating set of $\Gamma$.
Let $A\subset\Gamma$ be finite. Then there exists a finite set $B\subset\Gamma$ containing $A$ such that when $n\in \Nb$ is large enough, in the Cayley graph $C(\Gamma/\Gamma_n, \pi_n(S))$ the set  $(\Gamma/\Gamma_n)\setminus \pi_n(B)$ is connected.
\end{lemma}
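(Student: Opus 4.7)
The plan is to take $B$ to be the closed $r$-ball $B_r(A) = \{g \in \Gamma : d_S(g, A) \le r\}$ in the Cayley graph $C(\Gamma, S)$, where $d_S$ is the word metric coming from $S$ and $r$ is chosen sufficiently large, depending only on $A$ and $\Gamma$. The residual-finiteness hypothesis $\bigcap_n \bigcup_{i \ge n}\Gamma_i = \{e\}$ ensures that for any fixed radius $R$, the map $\pi_n$ is injective on $B_R(A)$ for all $n$ sufficiently large; consequently the subgraph of $C(\Gamma/\Gamma_n, \pi_n(S))$ induced on $\pi_n(B_R(A))$ is canonically isomorphic to the subgraph of $C(\Gamma, S)$ induced on $B_R(A)$.

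Connectedness of $(\Gamma/\Gamma_n)\setminus \pi_n(B)$ reduces to two substatements: (i) every vertex $v$ in the complement is connected, within the complement, to some vertex of the outer sphere $T_n := \pi_n\bigl(B_{r+1}(A) \setminus B_r(A)\bigr)$, and (ii) all of $T_n$ lies in a single connected component of the complement. Substatement (i) is automatic: take a geodesic in the finite quotient graph from $v$ to the nearest element of $\pi_n(B)$, lift its final edge to $C(\Gamma,S)$ with endpoint in $B$, and use local injectivity of $\pi_n$ to conclude that the penultimate vertex of the geodesic lies in $T_n$ while every earlier vertex lies outside $\pi_n(B)$. A path witnessing (ii) for a pair $u, u' \in B_{r+1}(A)\setminus B_r(A)$ corresponds to a path in $C(\Gamma, S)$ from $u$ to some element of $u'\Gamma_n$ whose vertices all lie outside $B\Gamma_n = \bigcup_{\gamma \in \Gamma_n} B\gamma$.

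The easy subcase of (ii) is when $u$ and $u'$ lie in the same connected component of $\Gamma \setminus B_r(A)$ in $C(\Gamma, S)$: a finite path from $u$ to $u'$ inside $\Gamma \setminus B_r(A)$ stays in some bounded ball $B_L(A)$, and since there are only finitely many pairs to consider, for $n$ large enough the non-identity translates $B\gamma$ with $\gamma \in \Gamma_n \setminus \{e\}$ are pushed entirely outside $B_L(A)$, so each such path avoids $B\Gamma_n$ automatically.

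The main obstacle is the subcase where $\Gamma \setminus B_r(A)$ has more than one connected component, which is forced precisely when $\Gamma$ has more than one end (e.g.\ for virtually $\Zb$ groups, free groups, and any group containing them). Distinct components cannot be joined by a path of $C(\Gamma, S)\setminus B$ lying in $\Gamma$ itself, so one must exploit the finite structure of $\Gamma/\Gamma_n$: each infinite component projects into the finite graph $C(\Gamma/\Gamma_n, \pi_n(S))$ and must there close up through long cycles whose lengths grow with $n$ by local injectivity of $\pi_n$, and these cycles glue distinct components together through paths travelling deep into them, far from $A$. By choosing $r$ large enough to isolate the end structure of $\Gamma \setminus B_r(A)$ and then $n$ large enough that every such gluing cycle lies entirely outside $B\Gamma_n$, one obtains the required connecting path and completes the proof. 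Making this gluing step precise, in particular quantifying how far out the cycles must lie relative to $B$, is the heart of the argument.
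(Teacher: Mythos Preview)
Your reduction to substatements (i) and (ii) is sound, and the easy subcase of (ii) is handled correctly.  But the hard subcase---when $\Gamma\setminus B$ has several components---is not proved; you explicitly say that ``making this gluing step precise \ldots\ is the heart of the argument'' and then stop.  The vague description you give (infinite components ``closing up through long cycles'' that ``glue distinct components together'') is not an argument: nothing guarantees that the image $\pi_n(\cC)$ of one component ever meets $\pi_n(\cC')$ for a different component $\cC'$, nor that any edge in the quotient joins them while avoiding $\pi_n(B)$.  Moreover, your choice $B=B_r(A)$ with $r$ large works against you: for groups such as free groups, the number of components of $\Gamma\setminus B_r(A)$ grows without bound in $r$, so ``choosing $r$ large enough to isolate the end structure'' has no clear meaning and does not simplify the picture.

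The paper's proof supplies exactly the missing idea, and it is quite different from anything in your sketch.  First, $B$ is chosen once and for all so that every component of $C(\Gamma,S)\setminus B$ is \emph{infinite} (take a connected $A'\supset A$ and absorb the finitely many finite components of its complement into $B$).  Then, for large $n$, one argues by contradiction using a translation trick: list the components $\cC_0,\dots,\cC_k$ of $(\Gamma/\Gamma_n)\setminus\pi_n(B)$ with $|\cC_0|$ minimal, find $g_0$ with $\pi_n(g_0)\in\cC_0$ deep enough that $g_0\pi_n(B\cup BS)$ is disjoint from $\pi_n(B\cup BS)$ (possible because each component reaches far from $B$, using the paths $\gamma_t$), and then compare the decomposition of $(\Gamma/\Gamma_n)\setminus\pi_n(B)$ with that of its translate $(\Gamma/\Gamma_n)\setminus g_0\pi_n(B)$.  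One shows $\pi_n(B\cup BS)\subset g_0\cC_0$ and hence, if $k>0$, that $\cC_k\cup\pi_n(B)\subset g_0\cC_0$, giving $|\cC_0|>|\cC_k|$, contradicting minimality.  This extremal/translation argument is the key step you are missing.
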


\begin{proof}
We claim that there exists a connected finite set $B \supset A \cup \{e\}$ such that every connected component of $C(\Gamma,S)\setminus B$ is infinite. To see this, let $A'\subset\Gamma$ be a finite connected set such that $A'\supset A\cup \{e\}$. For any connected component $\cC$ of $C(\Gamma,S) \setminus A'$, taking a path in $C(\Gamma, S)$ from some point in $\cC$ to some point in $A'$, we note that the last point of this path in $\cC$ must lie in $A'S$, whence $\cC \cap A'S\neq \emptyset$. It follows that $C(\Gamma,S) \setminus A'$ has only finitely many connected components. Denote by $B$ the union of $A'$ and all the finite connected components of $C(\Gamma,S) \setminus A'$. Then $B$ is finite, contains $A\cup \{e\}$, and is connected.
Furthermore, the connected components of $C(\Gamma,S) \setminus B$ are exactly the infinite connected components of $C(\Gamma,S) \setminus A'$, whence are all infinite.


For each $t\in BS\setminus B$, since the connected component of $C(\Gamma,S) \setminus B$ containing $t$ is infinite, we can take a path $\gamma_t$ in $C(\Gamma,S) \setminus B$ from $t$ to some element in $\Gamma\setminus (B(\{e\}\cup S)^2B^{-1})$.
Let $n \in \Nb$ be sufficiently large so that $\pi_n$ is injective on $B\cup (B(\{e\}\cup S)^2B^{-1}) \cup \bigcup_{t\in BS\setminus B}\gamma_t$.

An argument similar to that in the first paragraph of the proof shows that every connected component $\cC$ of $C(\Gamma/\Gamma_n,\pi_n(S)) \setminus \pi_n(B)$ has nonempty intersection with $\pi_n(BS)$, and hence contains $\pi_n(t)$ for some $t\in BS\setminus B$. Then $\cC$ contains $\pi_n(\gamma_t)$, whence contains $\pi_n(g)$ for some $g\in \gamma_t\setminus (B(\{e\}\cup S)^2B^{-1})$. Since $\pi_n$ is injective on $\gamma_t\cup (B(\{e\}\cup S)^2B^{-1})$, we have
$\pi_n(g)\not \in \pi_n(B(\{e\}\cup S)^2B^{-1})$, equivalently,
$g\pi_n(B\cup BS)\cap \pi_n(B\cup BS)=\emptyset$.


 List all the connected components of $C(\Gamma/\Gamma_n,\pi_n(S)) \setminus \pi_n(B)$ as $\cC_0,\cC_1,\ldots, \cC_k$.
 We may assume that $|\cC_0| = \min_{0\le j\le k} |\cC_j|$. Take $g_0\in \Gamma$ with $\pi_n(g_0)\in \cC_0$ and $g_0\pi_n(B\cup BS)\cap \pi_n(B\cup BS)=\emptyset$. Since $B$ is connected, $B\cup BS$ is connected. Thus $g_0\pi_n(B\cup BS)$ is connected and disjoint from $\pi_n(B)$.
 Therefore $g_0\pi_n(B\cup BS)$ is  contained in one of the connected components of $C(\Gamma/\Gamma_n,\pi_n(S)) \setminus \pi_n(B)$.
 As $\pi_n(g_0)\in (g_0\pi_n(B\cup BS))\cap \cC_0$, we get $g_0\pi_n(B\cup BS)\subset\cC_0$.


Since $\Gamma$ acts on $C(\Gamma/\Gamma_n,\pi_n(S))$ by left translation, the connected components of $C(\Gamma/\Gamma_n,\pi_n(S)) \setminus g_0\pi_n(B)$
are $g_0\cC_0, g_0\cC_1, \dots, g_0\cC_k$.



Suppose that $g_0\cC_0 \cap \pi_n(B) = \emptyset$. Then $g_0\cC_0$ must be contained in one of the connected components of $C(\Gamma/\Gamma_n,\pi_n(S)) \setminus \pi_n(B)$. Since $\cC_0\cap \pi_n(BS)\neq \emptyset$, one has $g_0\cC_0\cap g_0\pi_n(BS)\neq \emptyset$. Because $g_0\pi_n(BS)\subset\cC_0$, we have
$g_0\cC_0\cap \cC_0\neq \emptyset$. Therefore $g_0\cC_0\subset\cC_0$.
Because $|g_0\cC_0|=|\cC_0|$, this implies $g_0\cC_0=\cC_0$. But this contradicts the fact that $g_0\pi_n(B) \subset\cC_0$ but $g_0\pi_n(B) \cap g_0\cC_0 = \emptyset$.
So $g_0\cC_0 \cap \pi_n(B) \ne \emptyset$.

Since $\pi_n(B\cup BS)$ is connected and disjoint from $g_0\pi_n(B)$, it is contained in one of the connected components of $C(\Gamma/\Gamma_n,\pi_n(S)) \setminus g_0\pi_n(B)$. Because $g_0\cC_0 \cap \pi_n(B) \ne \emptyset$, we get $\pi_n(B\cup BS)\subset g_0\cC_0$.

Suppose that $k>0$. Since $\cC_k$ is disjoint from $\cC_0$ and $g_0\pi_n(B)\subset\cC_0$,
$\cC_k$ is disjoint from $g_0\pi_n(B)$. Thus $\cC_k$  is contained in one of the connected components of $C(\Gamma/\Gamma_n,\pi_n(S)) \setminus g_0\pi_n(B)$. Because $\cC_k$ has nonempty intersection with $\pi_n(BS)$, and $\pi_n(B\cup BS)\subset g_0\cC_0$, we get
$\cC_k\cup \pi_n(B) \subset g_0\cC_0$. Therefore $|\cC_0|=|g_0\cC_0|>|\cC_k|$
which contradicts the fact that $|\cC_0| = \min_{0\le j\le k} |\cC_j|$. Thus $k=0$; i.e., $C(\Gamma/\Gamma_n,\pi_n(S)) \setminus \pi_n(B)$ is connected.
\end{proof}


\begin{lemma} \label{L-bounded}
Let $f\in\Rb\Gamma$ be well-balanced and $g\in \Rb\Gamma$. Then there exists $C>0$ such that if $\pi_n(g)=h\pi_n(f)$ for some $n\in \Nb$ and $h\in \Rb(\Gamma/\Gamma_n)$, then
$$\max_{}\{h_{s\Gamma_n}:~s\Gamma_n\in \Gamma/\Gamma_n\} -\min_{}\{h_{s\Gamma_n}:~s\Gamma_n\in \Gamma/\Gamma_n\}\le C.$$
\end{lemma}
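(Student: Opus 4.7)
The plan is to combine the discrete maximum principle on the Cayley graph $C(\Gamma/\Gamma_n,f)$ with Lemma~\ref{L-connectedness in Cayley graphs} to reduce the oscillation estimate to a fixed-size set, and then handle that reduced estimate by a compactness/contradiction argument.

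First I would rewrite the equation using that $f$ is well-balanced. Because $\sum_s f_s=0$, for any $x\in\Gamma/\Gamma_n$ (and $n$ large enough that $\pi_n$ is injective on $\supp(f)\cup\supp(g)$),
\[
(h\pi_n(f))(x)=\sum_{s\in\supp(f)\setminus\{e\}}\bigl(h(x)-h(xs^{-1})\bigr)|f_s|=\pi_n(g)(x).
\]
Setting $\mu=-(f-f_e)/f_e$ as in Section~\ref{S-homoclinic}, this says that $h$ is $\mu$-harmonic at every $x$ whose $\pi_n$-preimage misses $\supp(g)$; the finitely many smaller $n$ contribute only a constant to $C$.

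Next I would apply Lemma~\ref{L-connectedness in Cayley graphs} to the symmetric finite generating set $S=\supp(f)\setminus\{e\}$ and the finite set $A=\supp(g)\cup\{e\}$ to produce a finite $B\supset A$ such that for all sufficiently large $n$ the complement $V_n:=(\Gamma/\Gamma_n)\setminus\pi_n(B)$ is connected in $C(\Gamma/\Gamma_n,\pi_n(S))$. Since $\pi_n(\supp(g))\subset\pi_n(B)$, the function $h$ is discretely $\mu$-harmonic on $V_n$. Applying the discrete maximum principle to both $h$ and $-h$ on the connected set $V_n$ (with boundary $\partial V_n\subset\pi_n(B)$) gives
\[
\max_{V_n}h\le\max_{\pi_n(B)}h,\qquad\min_{V_n}h\ge\min_{\pi_n(B)}h,
\]
so the global maximum and minimum of $h$ are attained on $\pi_n(B)$, and
\[
\max_{\Gamma/\Gamma_n}h-\min_{\Gamma/\Gamma_n}h=\max_{\pi_n(B)}h-\min_{\pi_n(B)}h.
\]

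It remains to bound the right-hand side uniformly in $n$, which is where the real work lies. Let $c:=\min_{s\in\supp(f)\setminus\{e\}}|f_s|>0$. At the global maximizer $x^+\in\pi_n(B)$, every summand in the rewritten equation is non-negative, so $h(x^+)-h(x^+s^{-1})\le\|g\|_\infty/c$ for every $s\in S$; the analogous bound holds at the minimizer. To propagate from neighbors of $x^+$ to all of $\pi_n(B)$, I would argue by contradiction: if there were $n_k\to\infty$ and solutions $h_k$ with oscillation on $\pi_{n_k}(B)$ tending to infinity, then after normalizing $\min_{\pi_{n_k}(B)}h_k=0$ and pulling back to $\Gamma$ via $\pi_{n_k}$, a diagonal subsequence of the resulting $\Gamma_{n_k}$-periodic functions $\tilde h_k:\Gamma\to[0,\infty)$ would converge pointwise to some $\tilde h_\infty:\Gamma\to[0,\infty]$ satisfying $\tilde h_\infty f=g$ in $\Gamma$. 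The maximum-principle reduction in the previous paragraph, combined with the fact that $B$ is fixed, forces $\tilde h_\infty$ to be finite on $B$; uniqueness of the solution to $vf=g$ in a suitable decay class on $\Gamma$ (compared for instance with the formal inverse solution from Section~\ref{S-homoclinic} when $\Gamma$ is not virtually $\Zb$ or $\Zb^2$, and with the potential-kernel analogue otherwise) then contradicts $\max_B\tilde h_k\to\infty$.

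The main obstacle is the last step: controlling the limiting pulled-back function on $B$ uniformly, i.e., showing that the compactness argument does not collapse to a trivial or infinite limit. This requires separating the case where $\Gamma$ is virtually $\Zb$ or $\Zb^2$ (where $\mu$ is recurrent and one works with the potential kernel) from the generic case (where the formal inverse $\omega\in C_0(\Gamma)$ of Section~\ref{S-homoclinic} is available), and in either case invoking a Harnack-type regularity coming from the harmonic continuation of $h$ across the connected set $V_n$.
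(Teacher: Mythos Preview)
Your plan has a genuine gap at exactly the place you identify as the ``main obstacle,'' and in fact the contradiction/compactness route is both unnecessary and difficult to complete. After normalizing $\min h_k=0$ and pulling back, a subsequential pointwise limit $\tilde h_\infty$ need only be a nonnegative $\mu$-harmonic function on $\Gamma\setminus\supp(g)$; there is no reason it should lie in any decay class, so you cannot invoke uniqueness against the formal inverse $\omega$. (Indeed, even for $\Gamma$ non-amenable there are plenty of bounded positive $\mu$-harmonic functions on $\Gamma$.) Splitting into the recurrent/transient cases and appealing to potential kernels does not rescue this: the limiting object is not a priori a Green's function perturbation of anything.

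The paper avoids all of this with a completely elementary, explicit estimate. First, one does not need Lemma~\ref{L-connectedness in Cayley graphs} at all: the maximum principle alone (exactly your observation that if $\pi_n(g)$ vanishes at a maximizer then the max propagates to all $S$-neighbors) already shows that both the maximum and minimum of $h$ are attained at points $s_{\max}\Gamma_n,\,s_{\min}\Gamma_n$ with $s_{\max},s_{\min}\in K:=\supp(g)$, for \emph{every} $n$. Then, instead of passing to a limit, one proves by induction on the word length $|t|$ (with respect to $S$) the explicit gradient bound
\[
h_{s_{\min}t\Gamma_n}\le h_{s_{\min}\Gamma_n}+|t|\cdot\frac{\|g\|_1}{f_e\,a^{|t|}},\qquad a:=\min_{s\in S}\mu_s,
\]
by solving the defining equation at $s_{\min}t_1\Gamma_n$ for $h_{s_{\min}t_1s_1\Gamma_n}$ and using the induction hypothesis on the remaining terms. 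Taking $t=s_{\min}^{-1}s_{\max}$ and $m=\max_{s\in K^{-1}K}|s|$ gives the constant $C=m\|g\|_1/(f_e a^m)$, valid for all $n$ simultaneously. Your first reduction was on the right track; what you were missing is that the oscillation on the fixed set $K$ can be bounded by a direct path-walking estimate, with no compactness needed.
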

\begin{proof} Set $\mu=-(f-f_e)/f_e$. Then $\mu$ is a symmetric finitely supported probability measure on $\Gamma$.
Denote by $S$ and $K$ the supports of $\mu$ and $g$ respectively. Set $a=\min_{s\in S} \mu_s$.

Suppose that $\pi_n(g)=h\pi_n(f)$ for some $n\in \Nb$ and $h\in \Rb(\Gamma/\Gamma_n)$. Denote by $W$ the set of $s\Gamma_n\in \Gamma/\Gamma_n$ satisfying
$h_{s\Gamma_n}=\min_{}\{h_{t\Gamma_n}:~t\Gamma_n\in \Gamma/\Gamma_n\}$. If $s\Gamma_n\in W\setminus K\Gamma_n$, then from
$$ 0=(\pi_n(g))_{s\Gamma_n}=(h\pi_n(f))_{s\Gamma_n}=f_e(h_{s\Gamma_n}-\sum_{t\in S}h_{st^{-1}\Gamma_n}\mu_t)$$
we conclude that $st\Gamma_n\in W$ for all $t\in S$. Since $S$ generates $\Gamma$, it follows that there exists some $s_{\min} \in K$
satisfying $h_{s_{\min}\Gamma_n}=\min_{}\{ h_{t\Gamma_n}:~ t\Gamma_n\in \Gamma/\Gamma_n\}$. Similarly, there exists some $s_{\max}\in K$
satisfying $h_{s_{\max}\Gamma_n}=\max_{}\{ h_{t\Gamma_n}:~ t\Gamma_n\in \Gamma/\Gamma_n\}$.

Now we show by induction on the word length $|t|$ of $t\in \Gamma$ with respect to $S$ that one has $h_{s_{\min}t\Gamma_n}\le h_{s_{\min}\Gamma_n}+|t|\cdot \frac{\|g\|_1}{f_ea^{|t|}}$ for all $t\in \Gamma$.
This is clear when $|t|=0$, i.e. $t=e$. Suppose that this holds for all $t\in \Gamma$ with $|t|\le k$. Let $t\in \Gamma$ with $|t|=k+1$. Then
$t=t_1s_1$ for some $t_1\in \Gamma$ with $|t_1|=k$ and some $s_1\in S$. Note that
\begin{align*}
-\|g\|_1&\le (\pi_n(g))_{s_{\min}t_1\Gamma_n}=(h\pi_n(f))_{s_{\min}t_1\Gamma_n}\\
&=f_e\left(h_{s_{\min}t_1\Gamma_n}-h_{s_{\min}t_1s_1\Gamma_n}\mu_{s_1^{-1}}-\sum_{s\in S\setminus \{s_1\}}h_{s_{\min}t_1s\Gamma_n}\mu_{s^{-1}}\right),
\end{align*}
and hence
\begin{align*}
h_{s_{\min}t\Gamma_n}&=
h_{s_{\min}t_1s_1\Gamma_n}\\
&\le \left(\frac{\|g\|_1}{f_e}+h_{s_{\min}t_1\Gamma_n}-\sum_{s\in S\setminus \{s_1\}}h_{s_{\min}t_1s\Gamma_n}\mu_{s^{-1}}\right)/\mu_{s_1^{-1}}\\
&\le \left(\frac{\|g\|_1}{f_e}+h_{s_{\min}\Gamma_n}+k\cdot \frac{\|g\|_1}{f_ea^k}-\sum_{s\in S\setminus \{s_1\}}h_{s_{\min}\Gamma_n}\mu_{s^{-1}}\right)/\mu_{s_1^{-1}}\\
&= \left(\frac{\|g\|_1}{f_e}+h_{s_{\min}\Gamma_n}\mu_{s_1^{-1}}+k\cdot \frac{\|g\|_1}{f_ea^k}\right)/\mu_{s_1^{-1}}\le h_{s_{\min}\Gamma_n}+|t|\cdot \frac{\|g\|_1}{f_ea^{|t|}}.
\end{align*}
This finishes the induction.

Set $m=\max_{s\in K^{-1}K} |s|$. Taking $t=s_{\min}^{-1}s_{\max}$ in above we get
\begin{align*}
h_{s_{\max}\Gamma_n}-h_{s_{\min}\Gamma_n}\le |s_{\min}^{-1}s_{\max}|\cdot \frac{\|g\|_1}{f_ea^{|s_{\min}^{-1}s_{\max}|}}\le \frac{m \|g\|_1}{f_e a^m}.
\end{align*}
Now we may set $C=\frac{m \|g\|_1}{f_e a^m}$.
\end{proof}

\begin{lemma} \label{L-connected component to left module}
Let $f\in \Zb\Gamma$ be well-balanced and $g\in \Zb\Gamma$. Then $\pi_n(g)\in \Zb(\Gamma/\Gamma_n)\pi_n(f)$ for all $n\in \Nb$ if and only if $g\in \Zb\Gamma f$.
\end{lemma}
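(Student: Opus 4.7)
The ``if'' direction is trivial: applying $\pi_n$ to $g = rf$, $r \in \Zb\Gamma$, gives $\pi_n(g) \in \Zb(\Gamma/\Gamma_n)\pi_n(f)$. For the converse, write $\pi_n(g) = h_n\pi_n(f)$ with $h_n \in \Zb(\Gamma/\Gamma_n)$. Well-balancedness implies that $\sigma_n := \sum_{t\Gamma_n\in\Gamma/\Gamma_n} t\Gamma_n$ satisfies $\sigma_n\pi_n(f) = 0$ (from $\sum_s f_s = 0$), and for $n$ large the kernel of right multiplication by $\pi_n(f)$ on $\Zb(\Gamma/\Gamma_n)$ equals $\Zb\sigma_n$ (since the image of $\supp(f)$ generates $\Gamma/\Gamma_n$). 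Hence I may add integer multiples of $\sigma_n$ to $h_n$ and arrange $\min h_n = 0$; Lemma~\ref{L-bounded} then gives $0 \le h_n \le C$ for a constant $C = C(f,g)$ independent of $n$. Lift $\tilde h_n : \Gamma \to \{0,\dots,C\}$ via $\tilde h_n(s) := h_n(s\Gamma_n)$; by Tychonoff I extract a subsequence $(n_k)$ along which $\tilde h_{n_k}(s) \to h(s)$ for every $s \in \Gamma$, and the finite support of $f$ together with eventual injectivity of $\pi_{n_k}$ on any finite set gives $hf = g$ in $\ell^\infty(\Gamma)$.

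It suffices to show $h$ is eventually constant, i.e.\ $h \equiv c$ on $\Gamma\setminus F$ for some integer $c$ and some finite $F \subset \Gamma$. Indeed, $h - c\mathbf{1}_\Gamma$ (where $\mathbf{1}_\Gamma$ is the constant function $1$) is then finitely supported and integer-valued, hence lies in $\Zb\Gamma$, and the identity $\mathbf{1}_\Gamma f = 0$ (again from $\sum_s f_s = 0$) yields $(h - c\mathbf{1}_\Gamma)f = hf - 0 = g$, so $g \in \Zb\Gamma f$.

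To prove eventual constancy, fix an increasing exhaustion $A^{(1)} \subset A^{(2)} \subset \cdots$ of $\Gamma$ by finite sets with $\supp(g) \subset A^{(1)}$. By Lemma~\ref{L-connectedness in Cayley graphs}, thicken each to $B^{(j)} \supset A^{(j)}$ so that $(\Gamma/\Gamma_n)\setminus\pi_n(B^{(j)})$ is connected in the Cayley graph of $\Gamma/\Gamma_n$ with respect to $\pi_n(S)$ for all sufficiently large $n$, where $S := \supp(f)\setminus\{e\}$. Arrange in addition $B^{(j+1)} \supset B^{(j)}S^{-1}$, which forces the inner boundary $\partial B^{(j+1)}$ (in $C(\Gamma,S)$) to be disjoint from $B^{(j)}$; in the quotient this yields $\partial U^{(j+1)}_n \subset U^{(j)}_n$, where $U^{(j)}_n := (\Gamma/\Gamma_n)\setminus\pi_n(B^{(j)})$. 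For $n$ large, $h_n$ is discrete $\mu$-harmonic on $U^{(j)}_n$ (with $\mu := -(f - f_e)/f_e$, using $\supp(g)\subset B^{(j)}$ to kill the source term and injectivity of $\pi_n$ on the relevant neighborhoods), so the strong maximum principle on the connected finite set $U^{(j)}_n$ produces the dichotomy: either $h_n|_{U^{(j)}_n}$ is constant, or $\max_{U^{(j)}_n} h_n < \max_{\partial U^{(j)}_n} h_n$, which by integer-valuedness sharpens to $\max_{U^{(j)}_n} h_n \le \max_{\partial U^{(j)}_n} h_n - 1$.

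By a diagonal extraction refine $(n_k)$ so that for each $j$ exactly one alternative holds uniformly along the subsequence. If the ``constant'' alternative occurs at some $j$, the pointwise limit gives $h \equiv c$ on $\Gamma\setminus B^{(j)}$ for an integer $c$, and I am done. Otherwise the ``strict inequality'' alternative holds for every $j$; the inclusion $\partial U^{(j+1)}_n \subset U^{(j)}_n$ permits iteration, yielding $\max_{U^{(j)}_n} h_n \le M_n - j$ with $M_n := \max h_n \le C$. Choosing $j = C + 1$ and $n$ large enough that $U^{(j)}_n \ne \emptyset$ (which holds because $[\Gamma:\Gamma_n] \to \infty$) forces $\max_{U^{(j)}_n} h_n < 0$, contradicting $h_n \ge 0$. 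Thus the ``constant'' alternative must occur at some $j$, completing the proof. I expect the main obstacle to be verifying the discrete strong maximum principle in the sharp integer-strict form and arranging the nested thickenings so that this strictness genuinely propagates through the iteration.
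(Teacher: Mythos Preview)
Your proof is correct and takes essentially the same approach as the paper: both use Lemma~\ref{L-bounded} for the uniform bound, Lemma~\ref{L-connectedness in Cayley graphs} to produce a nested chain of finite sets with connected complements in $\Gamma/\Gamma_n$, and then the discrete strong maximum principle together with integer-valuedness of $h_n$ to force $h_n$ constant outside a fixed finite set, after which a compactness limit yields $h\in\Zb\Gamma$ with $hf=g$. The paper's write-up is slightly leaner in that it runs the oscillation-drop induction (tracking $a_j-b_j$) for each fixed large $n$ and only then passes to a limit, so no diagonal extraction is needed; in your version two small points deserve tightening: in the ``constant'' alternative the value $c$ may a priori depend on $n_k$ (extract once more so it stabilizes), and your symbol $\partial U^{(j)}_n$ is the \emph{outer} boundary of $U^{(j)}_n$ (the inner boundary of $\pi_n(B^{(j)})$), which is worth stating explicitly since that is what makes the dichotomy and the inclusion $\partial U^{(j+1)}_n\subset U^{(j)}_n$ correct.
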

\begin{proof}
The ``if'' part is obvious. Suppose that $\pi_n(g)\in \Zb(\Gamma/\Gamma_n)\pi_n(f)$ for all $n\in \Nb$. For each $n\in \Nb$, take $h_n\in \Zb(\Gamma/\Gamma_n)$ such that $\pi_n(g)=h_n\pi_n(f)$. By Lemma~\ref{L-bounded} there exists some $C\in \Nb$ such that
$$\max_{s\Gamma_n\in \Gamma/\Gamma_n}(h_n)_{s\Gamma_n}-\min_{s\Gamma_n\in \Gamma/\Gamma_n}(h_n)_{s\Gamma_n}\le C$$
for all $n\in \Nb$.

Let $S$ be the set of all $s\in \Gamma\setminus \{e\}$ with $f_s\neq 0$. Denote  by $A_1$ the support of $g$. By Lemma~\ref{L-connectedness in Cayley graphs} we can find finite subsets $A_2, \dots, A_{C+1}$ of $\Gamma$ and $N\in \Nb$ such that for any $n\ge N$ and $1\le j\le C$, one has $A_j(\{e\}\cup S)\subset A_{j+1}$ and in the Cayley graph $C(\Gamma/\Gamma_n, \pi_n(S))$ the set
 $(\Gamma/\Gamma_n)\setminus \pi_n(A_{j+1})$ is connected.

Let $n\ge N$. Set $a_j=\max_{}\{(h_n)_{s\Gamma_n}:~s\Gamma_n\in (\Gamma/\Gamma_n)\setminus \pi_n(A_j)\}$ and $b_j=\min_{}\{(h_n)_{s\Gamma_n}:~s\Gamma_n\in (\Gamma/\Gamma_n)\setminus \pi_n(A_j)\}$ for all $1\le j\le C+1$. We shall show by induction that
$$ a_j-b_j\le C+1-j$$
for all $1\le j\le C+1$. This is trivial when $j=1$. Suppose that $a_j-b_j\le C+1-j$ for some $1\le j\le C$. If $a_{j+1}<a_j$,
then
$a_{j+1}-b_{j+1}<a_j-b_j\le C+1-j$ and hence $a_{j+1}-b_{j+1}\le C+1-(j+1)$. Thus we may assume that $a_{j+1}=a_j$.
 Denote by $W$ the set of elements in $(\Gamma/\Gamma_n)\setminus \pi_n(A_{j+1})$ at which $h_n$ takes the value $a_{j+1}$.
 Let $t\Gamma_n\in W$. Since $\pi_n(g)$ takes value $0$ at $t\Gamma_n$, we have
$$ f_e (\pi_n(h))_{t\Gamma_n}=\sum_{s\in S}(-f_s) (\pi_n(h))_{ts\Gamma_n}.$$
Note that $ts\Gamma_n\in  (\Gamma/\Gamma_n)\setminus \pi_n(A_j)$ for all $s\in S$. Thus $(\pi_n(h))_{ts\Gamma_n}\le a_j=a_{j+1}=(\pi_n(h))_{t\Gamma_n}$ for all $s\in S$, and hence $(\pi_n(h))_{t\Gamma_n}=(\pi_n(h))_{ts\Gamma_n}$ for all $s\in S$. Therefore, if for some $s\in S$ one has $ts\Gamma_n\in (\Gamma/\Gamma_n)\setminus \pi_n(A_{j+1})$, then $ts\Gamma_n\in W$. Take $t_1\Gamma_n, t_2 \Gamma_n \in \Gamma/\Gamma_n$.
By our choice of $A_{j+1}$ we have a path in $(\Gamma/\Gamma_n)\setminus \pi_n(A_{j+1})$ connecting $t_1\Gamma_n$ and $t_2\Gamma_n$. Therefore
$t_1\Gamma_n\in W \Leftrightarrow t_2\Gamma_n \in W$, whence $a_{j+1}-b_{j+1}=0\le C+1-(j+1)$.

Now we have that $h_n$ is a constant function on $(\Gamma/\Gamma_n)\setminus \pi_n(A_{C+1})$. Replacing $h_n$ by the difference of $h_n$ and a suitable constant function,
we may assume that $h_n$ is $0$ on $(\Gamma/\Gamma_n)\setminus \pi_n(A_{C+1})$. Then $\|h_n\|_\infty\le C$.

Passing to a subsequence of $\{\Gamma_n\}_{n\in \Nb}$ if necessary, we may assume that $h_n(s\Gamma_n)$ converges to some integer $h_s$ as $n\to \infty$ for every $s\in \Gamma$. Then $h_s=0$ for all $s\in \Gamma\setminus A_{C+1}$. Thus $h\in \Zb\Gamma$.
Note that
$$ (hf)_s=\lim_{n\to \infty}(h_n\pi_n(f))_{s\Gamma_n}=\lim_{n\to \infty}(\pi_n(g))_{s\Gamma_n}=g_s$$
for each $s\in \Gamma$ and hence $g=hf\in \Zb\Gamma f$. This proves the ``only if'' part.
\end{proof}

We are ready to prove Theorem~\ref{T-dense periodic points}.

\begin{proof}[Proof of Theorem~\ref{T-dense periodic points}]
Since $X_f$ is compact, the set of nonempty closed subsets of $X_f$ is a compact space under the Hausdorff distance \cite[Theorem 7.3.8]{BBI}. Thus, passing to a subsequence of $\{\Gamma_n\}_{n\in \Nb}$ if necessary, we may assume that $\Fix_{\Gamma_n}(X_f)$ converges to some nonempty closed subset $Y$ of $X_f$ under the Hausdorff distance as $n\to \infty$. A point $x\in X_f$ is in $Y$ exactly when $x=\lim_{n\to \infty} x_n$ for some $x_n\in \Fix_{\Gamma_n}(X_f)$ for each $n\in \Nb$. It follows easily that $Y$ is a closed subgroup of $X_f$. Thus, by Pontryagin duality it suffices to show that the only $\varphi\in \widehat{X_f}=\Zb\Gamma/\Zb\Gamma f$ vanishing on $Y$ is $0$. Let $\varphi\in \widehat{X_f}=\Zb\Gamma/\Zb\Gamma f$ vanishing on $Y$. Say, $\varphi=g+\Zb\Gamma f$ for some $g\in \Zb\Gamma$.

Let $U$ be a small neighborhood of $0$ in $\Rb/\Zb$ such that the only subgroup of $\Rb/\Zb$ contained in $U$ is $\{0\}$. Since $\Fix_{\Gamma_n}(X_f)$ converges to $Y$ under the Hausdorff distance, we have $\left<\Fix_{\Gamma_n}(X_f), \varphi\right>\subset U$ for all sufficiently large $n$. Note that
$\left<\Fix_{\Gamma_n}(X_f), \varphi\right>$ is a subgroup of $\Rb/\Zb$. By our choice of $U$, we see that $\varphi$ vanishes on $\Fix_{\Gamma_n}(X_f)$ for all sufficiently large $n$. Without loss of generality, we may assume that $\varphi$ vanishes on $\Fix_{\Gamma_n}(X_f)$ for all $n\in \Nb$.
From Lemmas~\ref{L-vanishing} and \ref{L-connected component to left module} we get $g\in \Zb\Gamma f$ and hence $\varphi=0$ as desired.
\end{proof}


\section{Sofic Entropy}\label{sec:sofic}
The purpose of this section is to review sofic entropy theory. To be precise, we use Definitions 2.2 and 3.3 of \cite{KL11b} to define entropy. So let $\Gamma$ act by homeomorphisms on a compact metrizable space $X$. Suppose the action preserves a Borel probability measure $\lambda$. Let $\Sigma:=\{\Gamma_n\}^\infty_{n=1}$ be a sequence of finite-index normal subgroups of $\Gamma$ such that $\bigcap_{n\in \N}\bigcup_{i\ge n}\Gamma_i=\{e\}$.


Let $\rho$ be a continuous pseudo-metric on $X$. For $n \in \N$, define, on the space $\Map(\Gamma/\Gamma_n,X)$ of all maps from $\Gamma/\Gamma_n$ to $X$ the pseudo-metrics
\begin{eqnarray*}
\rho_2(\phi,\psi)&:=& \left([\Gamma:\Gamma_n]^{-1}\sum_{s\Gamma_n \in \Gamma/\Gamma_n} \rho(\phi(s \Gamma_n), \psi(s\Gamma_n) )^2 \right)^{1/2},\\
\rho_\infty(\phi,\psi)&:=& \sup_{s\Gamma_n \in \Gamma/\Gamma_n} \rho(\phi(s \Gamma_n), \psi(s\Gamma_n)).
\end{eqnarray*}


\begin{definition}
Let $W\subset \Gamma$ be finite and nonempty and $\delta>0$. Define $\Map(W,\delta,\Gamma_n)$ to be the set of all maps $\phi: \Gamma/\Gamma_n \to X$ such that $\rho_2(\phi \circ s, s \circ \phi) \le \delta$ for all $s\in W$.

Given a finite set $L$ in the space $C(X)$ of continuous $\Rb$-valued functions on $X$, let  $\Map_\lambda(W,L,\delta,\Gamma_n) \subset \Map(W,\delta,\Gamma_n)$ be the subset of maps $\phi: \Gamma/\Gamma_n \to X$ such that $|\phi_*U_n(p)-\lambda(p)| \le  \delta$ for all $p\in L$, where $U_n$ denotes the uniform probability measure on $\Gamma/\Gamma_n$.
\end{definition}

\begin{definition}\label{defn:separating}
Let $(Z,\rho_Z)$ be a pseudo-metric space. A set $Y \subset Z$ is {\em $(\rho_Z,\epsilon)$-separating} if $\rho_Z(y_1,y_2) >\epsilon$ for every $y_1\ne y_2 \in Y$. If $\rho_Z$ is understood, then we simply say that $Y$ is {\em $\epsilon$-separating}. Let $N_\epsilon(Z,\rho_Z)$ denote the largest cardinality of a $(\rho_Z,\epsilon)$-separating subset of $Z$.
\end{definition}

Define
\begin{eqnarray*}
h_{\Sigma,2}(\rho)&:=&\sup_{\epsilon>0} \inf_{W \subset \Gamma}  \inf_{\delta>0} \limsup_{n\to\infty} [\Gamma:\Gamma_n]^{-1} \log N_\epsilon(\Map(W,\delta,\Gamma_n),\rho_2)\\
h_{\Sigma,\lambda,2}(\rho)&:=&\sup_{\epsilon>0} \inf_{W \subset \Gamma} \inf_{L\subset C(X)} \inf_{\delta>0} \limsup_{n\to\infty} [\Gamma:\Gamma_n]^{-1} \log N_\epsilon(\Map_\lambda(W,L,\delta,\Gamma_n),\rho_2).
\end{eqnarray*}
Similarly, define $h_{\Sigma,\infty}(\rho)$ and $h_{\Sigma,\lambda,\infty}(\rho)$ by replacing $\rho_2$ with $\rho_\infty$ in the formulae above.

The pseudo-metric $\rho$ is said to be {\em dynamically generating} if for any $x,y \in X$ with $x\ne y$, $\rho(sx,sy) >0$ for some $s \in \Gamma$.
\begin{theorem}\label{thm:entropy}
If $\rho$ is any dynamically generating continuous pseudo-metric on $X$ then
$$h_{\Sigma,\lambda}(X,\Gamma)=h_{\Sigma,\lambda,2}(\rho) = h_{\Sigma,\lambda,\infty}(\rho),$$
$$h_{\Sigma}(X,\Gamma) = h_{\Sigma,2}(\rho) = h_{\Sigma,\infty}(\rho).$$
\end{theorem}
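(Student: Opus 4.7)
The plan is to realize the quantities $h_{\Sigma,\lambda,2}(\rho)$, $h_{\Sigma,\lambda,\infty}(\rho)$, $h_{\Sigma,2}(\rho)$, $h_{\Sigma,\infty}(\rho)$ as instances of the Kerr--Li formulation of sofic entropy \cite{KL11a, KL11b} and then invoke their invariance theorems. The sequence $\Sigma = \{\Gamma_n\}$ with $\bigcap_n\bigcup_{i\ge n}\Gamma_i = \{e\}$ canonically furnishes a sofic approximation $\sigma_n : \Gamma \to \mathrm{Sym}(\Gamma/\Gamma_n)$ by left translation (in fact a genuine homomorphism), with uniform probability measure $U_n$ on $\Gamma/\Gamma_n$. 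Under this identification, the sets $\Map(W,\delta,\Gamma_n)$ and $\Map_\lambda(W,L,\delta,\Gamma_n)$ in our definition are exactly the sets of approximately equivariant (and, for the measure version, distribution-matching) maps from Kerr--Li, so the four quantities above are precisely the Kerr--Li entropies computed with respect to this sofic approximation.

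The first and easy inequality is $h_{\Sigma,\lambda,2}(\rho) \le h_{\Sigma,\lambda,\infty}(\rho)$ (and its topological analogue). Since $\rho_2 \le \rho_\infty$ pointwise on $\Map(\Gamma/\Gamma_n,X)$, every $(\rho_2,\varepsilon)$-separated subset is automatically $(\rho_\infty,\varepsilon)$-separated, whence $N_\varepsilon(\cdot,\rho_2) \le N_\varepsilon(\cdot,\rho_\infty)$, and the inequality persists after taking limits, infima, and suprema.

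The substantive content is the reverse inequality together with independence of the choice of dynamically generating $\rho$. For this I would quote the Kerr--Li theorems (Theorem 2.3 and Theorem 3.4 of \cite{KL11b}, or their analogues for the sequential sofic approximation coming from $\Sigma$). The core technical lemma underlying them is the following: given any two dynamically generating continuous pseudo-metrics $\rho,\rho'$, by compactness of $X$ and dynamic generation of $\rho'$, for every $\varepsilon > 0$ there exist a finite $F \subset \Gamma$ and $\delta > 0$ such that $\max_{s\in F}\rho'(sx,sy) < \delta$ implies $\rho(x,y) < \varepsilon$. Combined with uniform continuity of the $\Gamma$-action on $\rho$, this allows one to replace $\rho$ by any refined dynamically generating pseudo-metric without changing the entropy.

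The main obstacle, handled inside \cite{KL11b}, is the equality $h_{\Sigma,\lambda,\infty}(\rho) \le h_{\Sigma,\lambda,2}(\rho)$. The point is that a large $\rho_\infty$-gap between two approximately equivariant maps $\phi,\psi$ lives at some coordinate $s\Gamma_n$, but the approximate equivariance forces the discrepancy to propagate along the $\sigma_n(W)$-translates of that coordinate; when $W$ is large this creates many coordinates of substantial $\rho$-discrepancy, so the $\rho_2$-distance is in fact bounded below by a constant multiple of the $\rho_\infty$-distance on $\Map(W,\delta,\Gamma_n)$ once $W$ is chosen large enough relative to $\varepsilon$ and $\delta$ small enough. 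This averaging argument is exactly where the dynamically generating hypothesis enters in an essential way, and it is the step I would expect to cite verbatim from Kerr--Li rather than reprove.
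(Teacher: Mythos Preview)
Your proposal is correct and takes the same approach as the paper: both simply invoke Kerr--Li \cite{KL11b} (the paper cites Propositions~2.4 and~3.4 there), recognizing that the residually finite sequence $\Sigma$ gives a genuine sofic approximation by left translation so that the $\Map$ sets coincide with theirs. One small caveat: the heuristic you sketch in the final paragraph---that approximate equivariance propagates a single $\rho_\infty$-discrepancy to many coordinates, forcing $\rho_2 \gtrsim \rho_\infty$---is not how the Kerr--Li argument actually runs (the equivariance bound is only an $\ell^2$-average over coordinates, so one cannot localize it to the specific bad coordinate); the real mechanism replaces $\rho$ by $\rho_F(x,y)=\max_{s\in F}\rho(sx,sy)$ and compares $(\rho_F)_2$ to $\rho_\infty$, but since you explicitly defer to the citation this does not create a gap.
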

\begin{proof}
This follows from Propositions 2.4 and 3.4 of \cite{KL11b}.
\end{proof}

\section{Entropy of the Harmonic Model}

In this section we prove Theorem~\ref{thm:main1}. Throughout this section we let $\Gamma$ be a countably infinite group, $\Sigma=\{\Gamma_n\}^\infty_{n=1}$ be a sequence of finite-index normal subgroups of $\Gamma$ satisfying $\bigcap_{n=1}^\infty \bigcup_{i\ge n} \Gamma_i = \{e\}$, and $f \in \Z\Gamma$ be well-balanced.

\subsection{The lower bound}

Note that $\Fix_{\Gamma_n}(X_f) \subset X_f$ is a closed $\Gamma$-invariant subgroup. Let $\lambda_n$ be its Haar probability measure.

\begin{lemma} \label{L-measure convergence}
The measure $\lambda_n$ converges in the weak* topology to the Haar probability measure $\lambda$ on $X_f$ as $n\to \infty$.
\end{lemma}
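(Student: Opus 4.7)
The plan is to invoke Pontryagin duality on the compact metrizable abelian group $X_f$. By Stone--Weierstrass, finite $\Cb$-linear combinations of (the complex incarnations of) characters are dense in $C_\Cb(X_f)$, so the weak$^*$ convergence $\lambda_n\to\lambda$ is equivalent to convergence of every Fourier coefficient: $\int \chi\,d\lambda_n\to \int\chi\,d\lambda$ for each $\chi\in\widehat{X_f}=\Zb\Gamma/\Zb\Gamma f$.

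Since $\lambda$ and $\lambda_n$ are the Haar probability measures on the closed subgroups $X_f$ and $\Fix_{\Gamma_n}(X_f)$ respectively, their Fourier coefficients take values in $\{0,1\}$. Indeed $\int\chi\,d\lambda=1$ exactly when $\chi=0$, and $\int\chi\,d\lambda_n=1$ exactly when $\chi$ annihilates $\Fix_{\Gamma_n}(X_f)$. For the trivial character convergence is immediate. For a nonzero $\chi$ I must verify that $\chi$ fails to annihilate $\Fix_{\Gamma_n}(X_f)$ for all sufficiently large $n$, and for this I would invoke Theorem~\ref{T-dense periodic points}; its hypotheses apply here because the well-balanced condition forces the finite support of $f$ to generate $\Gamma$, so $\Gamma$ is finitely generated.

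Suppose $\chi\neq 0$ vanishes on $\Fix_{\Gamma_n}(X_f)$ along some infinite subset $I\subset\Nb$. Fix a compatible metric $\rho$ on $X_f$. By Theorem~\ref{T-dense periodic points}, for each $x\in X_f$ one may pick $x_n\in\Fix_{\Gamma_n}(X_f)$ with $\rho(x_n,x)\to 0$ as $n\to\infty$ in $I$. Continuity of $\chi\colon X_f\to\Rb/\Zb$ then forces $\chi(x)=\lim_{n\in I}\chi(x_n)=0$, so $\chi\equiv 0$, a contradiction. This gives the required convergence.

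The crux is Theorem~\ref{T-dense periodic points}; the remainder is routine Fourier analysis on compact abelian groups. The one subtlety worth flagging is that Hausdorff convergence of arbitrary closed subsets need not entail weak$^*$ convergence of the associated probability measures; what saves us is that the $\Fix_{\Gamma_n}(X_f)$ are \emph{subgroups}, which forces Fourier coefficients into $\{0,1\}$ and reduces the problem to a purely topological statement about each nonzero character, to which Theorem~\ref{T-dense periodic points} applies cleanly. (Alternatively, one could bypass Theorem~\ref{T-dense periodic points} and argue directly via Lemma~\ref{L-vanishing} and Lemma~\ref{L-connected component to left module} applied to the subsequence $\{\Gamma_n\}_{n\in I}$, but the route above is the shortest.)
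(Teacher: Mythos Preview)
Your proof is correct and takes a genuinely different route from the paper. The paper argues via translation invariance: it passes to a weak$^*$-convergent subsequence $\lambda_{n_i}\to\lambda_\infty$, then uses Theorem~\ref{T-dense periodic points} together with continuity of the action $(x,\mu)\mapsto (A_x)_*\mu$ to show that $\lambda_\infty$ is invariant under translation by every $x\in X_f$, hence equals $\lambda$ by uniqueness of Haar measure. Your argument instead works character by character, exploiting the fact that Haar measures on closed subgroups have $\{0,1\}$-valued Fourier coefficients, and then uses Theorem~\ref{T-dense periodic points} to rule out any nonzero $\chi$ vanishing on $\Fix_{\Gamma_n}(X_f)$ along an infinite subsequence. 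Both proofs rest on the same key input (the Hausdorff convergence of periodic-point groups), and both are short; yours is arguably a bit more transparent about exactly where the subgroup structure is used, while the paper's argument generalizes more readily to settings where the dual group is not as explicit. Your observation that the well-balanced hypothesis guarantees $\Gamma$ is finitely generated, so that Theorem~\ref{T-dense periodic points} applies, is a detail the paper takes for granted at this point; it is good that you flagged it. The subsequence issue is harmless: Hausdorff convergence along the full sequence trivially passes to any subsequence (and in any case the hypothesis $\bigcap_n\bigcup_{i\ge n}\Gamma_i=\{e\}$ is inherited by infinite subsequences).
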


\begin{proof}
For $x\in X_f$, let $A_x:X_f \to X_f$ by the addition map $A_x(y)=x+y$. Each $A_x$ induces a map $(A_x)_*$ on the space $M(X_f)$ of all Borel probability measures on $X_f$. The map $X_f \times M(X_f) \to M(X_f)$ defined by $(x,\mu) \mapsto (A_x)_*\mu$ is continuous (with respect to the weak* topology on $M(X_f)$).

Choose an increasing sequence $\{n_i\}$ of natural numbers so that $\lim_{i\to\infty} \lambda_{n_i} = \lambda_\infty\in M(X_f)$ exists (this is possible by the Banach-Alaoglu Theorem). By the above if $x_i \in \Fix_{\Gamma_{n_i}}(X_f)$ and $\lim_{i\to\infty} x_i = x$ then
$$\lim_{i\to\infty} (A_{x_i})_*\lambda_{n_i} = (A_x)_*\lambda_\infty.$$
Since $\lambda_{n_i}$ is the Haar probability measure on $\Fix_{\Gamma_{n_i}}(X_f)$, $(A_{x_i})_*\lambda_{n_i} = \lambda_{n_i}$, so the above implies $(A_x)_*\lambda_\infty = \lambda_\infty$. Because $\Fix_{\Gamma_{n_i}}(X_f)$ converges in the Hausdorff topology to $X_f$ by  Theorem~\ref{T-dense periodic points}, we have that $(A_x)_*\lambda_\infty = \lambda_\infty$ for every $x\in X_f$ which, by uniqueness of the Haar probability measure, implies that $\lambda_\infty=\lambda$ as required.
\end{proof}

\begin{definition}\label{defn:rho}
For $t \in \R/\Z$ let $t' \in [-1/2,1/2)$ be such that $t' + \Z = t$ and define $|t|: = |t'|$. Similarly, for $x\in (\R/\Z)^\Gamma$, let $x' \in \R^\Gamma$ be the unique element satisfying $x'_s \in [-1/2,1/2)$ and $x'_s + \Z = x_s$ for all $s\in \Gamma$. Define $\|x\|_\infty = \|x'\|_\infty$. Let $\rho$ be the continuous pseudo-metric on $X_f$ defined by $\rho(x,y)=|(x-y)'_e|$. It is easy to check that $\rho$ is dynamically generating.
\end{definition}

For $x\in \Fix_{\Gamma_n}(X_f)$, let $\phi_x:\Gamma/\Gamma_n \to X_f$ be the map defined by $\phi_x(s\Gamma_n) = sx$ for all $s\in \Gamma$. Let $W \subset \Gamma, L \subset C(X_f)$ be non-empty finite sets and $\delta>0$. Note that $\phi_x\in \Map(W,\delta,\Gamma_n)$ for all $x\in \Fix_{\Gamma_n}(X_f)$.
Let $\BAD(W,L,\delta,\Gamma_n)$ be the set of all $x\in \Fix_{\Gamma_n}(X_f)$ such that $\phi_x \notin \Map_\lambda(W,L,\delta,\Gamma_n)$. Because $\phi_x\circ s = s \circ \phi_x$ for every $s\in \Gamma$, $\phi_x \notin \Map_\lambda(W,L,\delta,\Gamma_n)$ if and only if there exists $p\in L$ such that
$$\left| (\phi_x)_*U_n(p) - \lambda(p)\right| > \delta.$$

\begin{lemma}\label{lem:BAD1}
Assume that $\Gamma$ is not virtually $\Zb$ or $\Zb^2$.
Then
$$\lim_{n\to\infty} \lambda_n(\BAD(W,L,\delta,\Gamma_n)) = 0.$$
\end{lemma}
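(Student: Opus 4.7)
The plan is to apply Chebyshev's inequality, reducing to showing $\mathrm{Var}_{\lambda_n}(f_p) \to 0$ for each $p \in L$, where $f_p(x) := (\phi_x)_*U_n(p) = [\Gamma:\Gamma_n]^{-1}\sum_{s\Gamma_n \in \Gamma/\Gamma_n} p(sx)$ viewed as a function on $\Fix_{\Gamma_n}(X_f)$. Since $\Fix_{\Gamma_n}(X_f)$ is a $\Gamma$-invariant compact subgroup (as $\Gamma_n \lhd \Gamma$) and $\lambda_n$ is its Haar measure, $\lambda_n$ is $\Gamma$-invariant, so $\int f_p \, d\lambda_n = \lambda_n(p) \to \lambda(p)$ by Lemma~\ref{L-measure convergence}; hence it is enough that the variance tends to zero. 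By Stone--Weierstrass on the compact abelian group $X_f$, every $p \in C(X_f)$ is uniformly approximable by a finite linear combination $\sum_j c_j \chi_j$ of characters $\chi_j \in \widehat{X_f} = \Zb\Gamma/\Zb\Gamma f$. A triangle inequality argument in $L^2(\lambda_n)$ then reduces the claim, up to an arbitrarily small uniform error on $L$, to proving $\|f_\chi - \lambda(\chi)\|_{L^2(\lambda_n)} \to 0$ for each character $\chi$; the trivial character is immediate since $f_\chi \equiv 1$, so only nonzero $\chi$ require attention.

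Fix $\chi = g + \Zb\Gamma f \ne 0$, so $\lambda(\chi) = 0$ and $\lambda_n(\chi) \to 0$ by Lemma~\ref{L-measure convergence}. The function $x \mapsto \chi(sx)$ is itself a character of $X_f$, and restricts to the character $\tilde\psi_s := \pi_n(s^{-1}g) + \Zb(\Gamma/\Gamma_n)\pi_n(f)$ of $\Fix_{\Gamma_n}(X_f) \cong X_{\pi_n(f)}$. Orthonormality of characters under $\lambda_n$ then gives
\[
\|f_\chi\|_{L^2(\lambda_n)}^2 = [\Gamma:\Gamma_n]^{-2}\,\#\{(s\Gamma_n,t\Gamma_n) : \tilde\psi_s = \tilde\psi_t\} = \frac{|W_n(\chi)|}{[\Gamma:\Gamma_n]},
\]
where $W_n(\chi) \le \Gamma/\Gamma_n$ is the stabilizer of the image character $\pi_n(g) + \Zb(\Gamma/\Gamma_n)\pi_n(f)$ under the natural $\Gamma/\Gamma_n$-action on $\widehat{\Fix_{\Gamma_n}(X_f)}$ (the condition $\tilde\psi_s = \tilde\psi_t$ depends only on the coset $st^{-1}\Gamma_n$). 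The entire problem is thus reduced to the estimate $|W_n(\chi)| = o([\Gamma:\Gamma_n])$.

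This last estimate is the main obstacle, and I would handle it by contradiction. Suppose $|W_n(\chi)|/[\Gamma:\Gamma_n] \not\to 0$. Then along a subsequence the preimages $\tilde W_n := \pi_n^{-1}(W_n(\chi)) \le \Gamma$ satisfy $[\Gamma : \tilde W_n] \le c$ for some constant $c$. Since $f$ is well-balanced its support generates $\Gamma$, so $\Gamma$ is finitely generated and therefore has only finitely many subgroups of index at most $c$; passing to a further subsequence $\tilde W_n = W$ is a fixed subgroup of $\Gamma$. For every $s \in W$ and every $n$ in the subsequence, $s\chi - \chi$ lies in the annihilator $A_n \le \widehat{X_f}$ of $\Fix_{\Gamma_n}(X_f)$, i.e.\ $\pi_n(s^{-1}g - g) \in \Zb(\Gamma/\Gamma_n)\pi_n(f)$. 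Applying Lemma~\ref{L-connected component to left module} to the subsequence (which still satisfies $\bigcap_k \bigcup_{i \ge k}\Gamma_{n_i} = \{e\}$) gives $\bigcap_n A_n = \{0\}$, hence $s\chi = \chi$ for every $s \in W$, so $W \subset \mathrm{Stab}_\Gamma(\chi)$. But Theorem~\ref{T-dense homoclinic}(2) yields mixing of order $2$ for $(X_f,\lambda,\Gamma)$ under our hypothesis on $\Gamma$, and criterion~(5) of Lemma~\ref{L-mixing of all order} then forces $\mathrm{Stab}_\Gamma(\chi)$ to be finite---contradicting the fact that $W$, as a finite-index subgroup of the infinite group $\Gamma$, is infinite. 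Combining the steps, $\|f_\chi\|_{L^2(\lambda_n)} \to 0$ for each nonzero character, so $\mathrm{Var}_{\lambda_n}(f_p) \to 0$ after the Stone--Weierstrass approximation, and Chebyshev completes the proof.
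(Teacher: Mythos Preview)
Your proof is correct but takes a genuinely different route from the paper's. The paper argues by soft compactness and ergodicity: it partitions $\BAD(W,L,\delta,\Gamma_n)$ according to the sign pattern $\sigma\in\{-1,0,1\}^L$ of the deviations, notes that each piece is $\Gamma$-invariant, passes to weak* limits of the corresponding conditioned measures $\lambda_{n,\sigma}$, and then uses that $\lambda$ is ergodic (Theorem~\ref{T-dense homoclinic}) to force each limit with positive weight to equal $\lambda$, contradicting the persistent bias. No character calculus, no stabilizer counting, and no direct appeal to Lemma~\ref{L-connected component to left module} appear.

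Your argument is instead a quantitative second-moment computation: Chebyshev plus Stone--Weierstrass reduce to characters, orthogonality turns $\|f_\chi\|_{L^2(\lambda_n)}^2$ into an orbit--stabilizer ratio, and the finiteness of subgroups of bounded index in a finitely generated group together with Lemma~\ref{L-connected component to left module} and mixing pin down the stabilizers. Both proofs ultimately rest on Theorem~\ref{T-dense homoclinic}; the paper uses only ergodicity from it, while you use mixing of order two. The paper's approach is more portable (it would work verbatim for any algebraic action where the periodic-point measures converge weakly to an ergodic Haar measure), whereas yours makes the algebraic mechanism explicit and could in principle give a rate on $\lambda_n(\BAD)$ in terms of the speed at which $|W_n(\chi)|/[\Gamma:\Gamma_n]\to 0$.
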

\begin{proof}
The proof is similar to the proof of  \cite[Theorem 3.1]{Bo11}. Let $n\in \Nb$. For each $\sigma \in \{-1,0,1\}^L$, let $\BAD_\sigma(W,L,\delta,\Gamma_n)$ be the set of all $x \in \BAD(W,L,\delta,\Gamma_n)$ such that for every $p \in L$, if $\sigma(p)\ne 0$ then
$$\sigma(p)\left[(\phi_x)_*U_n(p)-\lambda(p)\right] =\sigma(p) \left[ -\lambda(p) + [\Gamma:\Gamma_n]^{-1}\sum_{s\Gamma_n\in \Gamma/\Gamma_n} p(sx) \right] > \delta$$
and if $\sigma(p)=0$ then $\left|(\phi_x)_*U_n(p)-\lambda(p)\right| \le \delta$.
Observe that for each $\sigma \in \{-1,0,1\}^L$, $\BAD_\sigma(W,L,\delta,\Gamma_n)$ is  $\Gamma$-invariant.  Moreover,  $\{\BAD_\sigma(W,L,\delta,\Gamma_n):~\sigma \in \{-1,0,1\}^L\}$ is a partition of $\BAD(W,L,\delta,\Gamma_n)$.

Let $t_{n,\sigma} = \lambda_n(\BAD_\sigma(W,L,\delta,\Gamma_n))$ and $t_{n,G} = 1- \lambda_n(\BAD(W,L,\delta,\Gamma_n))$. So $t_{n,G} + \sum_\sigma t_{n,\sigma} = 1$. For each $\sigma \in \{-1,0,1\}^L$, define a Borel probability measure $\lambda_{n,\sigma}$ on $X_f$ by
$$\lambda_{n,\sigma}(E) = \lambda_n(E \cap \BAD_\sigma(W,L,\delta,\Gamma_n)) t_{n,\sigma}^{-1},\quad \forall \mbox{ Borel } E \subset X_f$$
if $t_{n,\sigma} \ne 0$. Otherwise, define $\lambda_{n,\sigma}$ arbitrarily. Let $\lambda_{n,G}$ be the Borel probability measure on $\Fix_{\Gamma_n}(X_f)$ defined by
$$\lambda_{n,G}(E) = \lambda_n( E \setminus \BAD(W,L,\delta,\Gamma_n) ) t_{n,G}^{-1},\quad \forall \mbox{ Borel } E \subset  X_f$$
if $t_{n,G}\ne 0$. Otherwise, define $t_{n,G}$ arbitrarily. Observe that
$$\lambda_n = t_{n,G}\lambda_{n,G} + \sum_{\sigma} t_{n,\sigma} \lambda_{n,\sigma}.$$
Because the space of Borel probability measures on $X_f$ is weak* sequentially compact (by the Banach-Alaoglu Theorem), there is a subsequence $\{n_i\}_{i=1}^\infty$ such that
\begin{itemize}
\item $\lambda_{n_i,G}$ converges in the weak* topology as $i\to\infty$ to a Borel probability measure $\lambda_{\infty,G}$ on $X_f$,
\item each $\lambda_{n_i,\sigma}$ converges in the weak* topology as $i\to\infty$ to a Borel probability measure $\lambda_{\infty,\sigma}$ on $X_f$,
\item the limits $\lim_{i\to\infty} t_{n_i,G} = t_{\infty,G}$ and $\lim_{i\to\infty} t_{n_i,\sigma} = t_{\infty,\sigma}$ exist for all $\sigma$.
\end{itemize}
By the previous lemma, $\lambda_n$ converges to $\lambda$ as $n\to\infty$. Therefore,
$$\lambda = t_{\infty,G}\lambda_{\infty,G} + \sum_{\sigma} t_{\infty,\sigma} \lambda_{\infty,\sigma}.$$
Because weak* convergence preserves invariance, $\lambda_{\infty,G}$ and each of $\lambda_{\infty,\sigma}$ are $\Gamma$-invariant Borel probability measures on $X_f$. Because $\lambda$ is ergodic by Theorem~\ref{T-dense homoclinic}, this implies that for each $\sigma \in \{-1,0,1\}^L$ with $t_{\infty,\sigma} \ne 0$, $\lambda_{\infty,\sigma} = \lambda$. However, for any $p \in L$ with $\sigma(p)\ne 0$,
$$\sigma(p)\left(\lambda_{\infty,\sigma}(p) - \lambda(p)\right)=\lim_{i\to\infty} \sigma(p)\left(\lambda_{n_i,\sigma}(p) -\lambda(p) \right) \ge \delta.$$
This contradiction implies $t_{\infty,\sigma} = 0$ for all $\sigma \in \{-1,0,1\}^L$ (if $\sigma$ is constantly $0$, then $\BAD_\sigma(W,L,\delta,\Gamma_n)$ is empty so $t_{\infty,\sigma}=0$). Thus $\lim_{n\to\infty} t_{n,\sigma} = 0$ for all $\sigma$ which, since
$$\lambda_n(\BAD(W,L,\delta,\Gamma_n)) = \sum_{\sigma} \lambda_n(\BAD_\sigma(W,L,\delta,\Gamma_n) ) = \sum_{\sigma} t_{n,\sigma},$$
implies the lemma.
\end{proof}

\begin{lemma}\label{lem:well-known}
Let $x \in \ell^\infty(\Gamma)$ and suppose $xf = 0$. Suppose also that for some finite-index subgroup $\Gamma'<\Gamma$, $sx=x$ for all $s\in \Gamma'$. Then $x$ is constant.
\end{lemma}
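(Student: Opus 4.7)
The plan is to reduce the lemma to the maximum principle for a bounded harmonic function on a finite irreducible Markov chain, reusing the notation from Section~\ref{S-homoclinic}. First I would set $\mu := -(f - f_e)/f_e$, so that $\mu$ is a symmetric, finitely supported probability measure on $\Gamma$ whose support generates $\Gamma$, and $f = f_e(1 - \mu)$ in $\R\Gamma$. Since $f_e > 0$, the hypothesis $xf = 0$ is equivalent to the harmonicity equation $x = x\mu$, that is
\[ x_t = \sum_{s \in \Gamma} x_{ts^{-1}} \mu_s \qquad \text{for every } t \in \Gamma. \]

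Next, the invariance $sx = x$ for every $s \in \Gamma'$ unfolds to $x_{gt} = x_t$ for all $g \in \Gamma'$ and $t \in \Gamma$, so $x$ descends to a bounded function $\bar x$ on the finite set $V := \Gamma'\backslash\Gamma$ of right cosets. Right multiplication by any $s \in \Gamma$ is well-defined on $V$, and the harmonicity equation descends to
\[ \bar x(v) = \sum_{s \in \Gamma} \bar x(vs^{-1}) \mu_s \qquad \text{for every } v \in V. \]
This exhibits $\bar x$ as a bounded harmonic function for the finite Markov chain $P$ on $V$ that sends $v$ to $vs^{-1}$ with probability $\mu_s$.

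To conclude, I would observe that $P$ is irreducible: since $\supp(\mu)$ is symmetric and generates $\Gamma$, iterated right multiplication by elements of $\supp(\mu)$ acts transitively on $V = \Gamma'\backslash\Gamma$. Then a standard maximum-principle argument finishes the job: choose $v_0 \in V$ where $\bar x$ attains its maximum $M$; harmonicity at $v_0$ together with $\bar x \le M$ forces $\bar x(v_0 s^{-1}) = M$ for every $s \in \supp(\mu)$; iterating and invoking irreducibility yields $\bar x \equiv M$, so $x$ is constant on $\Gamma$. There is no serious obstacle here --- the only mild subtlety is noting that right multiplication is well-defined on the right-coset space $\Gamma'\backslash\Gamma$, so that the harmonicity equation for $x$ passes cleanly to the quotient even though $\Gamma'$ need not be normal.
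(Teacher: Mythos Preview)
Your proof is correct and is essentially the same maximum-principle argument the paper gives: the paper simply observes that $\Gamma'$-invariance forces $x$ to take only finitely many values, so its minimum is attained at some $s_0\in\Gamma$, and then the harmonicity equation $x_{s_0}=\sum_{t}x_{s_0t}\mu_t$ propagates the minimum over the (symmetric, generating) support of $\mu$ to all of $\Gamma$. Your passage to the finite right-coset space $\Gamma'\backslash\Gamma$ is a cosmetic repackaging of the same idea rather than a different route.
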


\begin{proof}
Because $x$ is fixed by a finite-index subgroup, there is an element $s_0\in \Gamma$ such that $x_{s_0} = \min_{s\in \Gamma} x_s$. Because $xf=0$ and $f$ is well-balanced this implies that $x_{s_0t}=x_{s_0}$ for every $t$ in the support of $f$. By induction, $x_{s_0 t}=x_{s_0}$ for every $t$ in the semi-group generated by the support of $f$. By hypothesis, this semi-group is all of $\Gamma$.
\end{proof}

\begin{lemma} \label{L-small suprenorm to constant}
There is a number $C>0$ such that if $x\in \Fix_{\Gamma_n}(X_f)$ for some $n\in \Nb$ satisfies $\|x\|_\infty< C$ then $x$ is constant.
\end{lemma}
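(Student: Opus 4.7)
The plan is to choose $C = 1/\|f\|_1$ and reduce the statement to Lemma~\ref{lem:well-known} applied to a suitable real-valued lift of $x$. The key observation is that elements of $X_f$ with small sup-norm have a lift to $\Rb^\Gamma$ for which the convolution with $f$ is \emph{exactly} zero, not merely zero modulo $\Zb$.

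Concretely, let $x \in \Fix_{\Gamma_n}(X_f)$ satisfy $\|x\|_\infty < 1/\|f\|_1$, and let $x' \in [-1/2, 1/2)^\Gamma$ be the canonical lift from Definition~\ref{defn:rho}, so that $\|x'\|_\infty = \|x\|_\infty$ and $x' + \Zb^\Gamma = x$ coordinate-wise. Since $x \in X_f$ means $xf^* = 0$ in $(\Rb/\Zb)^\Gamma$, and well-balancedness gives $f^* = f$, we have $xf = 0$ in $(\Rb/\Zb)^\Gamma$. Hence $x'f$ takes values in $\Zb$ at every coordinate. But
\[
\|x' f\|_\infty \le \|x'\|_\infty \cdot \|f\|_1 < 1,
\]
so $x'f = 0$ in $\Rb^\Gamma$.

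Next I would observe that the canonical lift commutes with the $\Gamma$-action on coordinates: if $sx = x$ for some $s \in \Gamma$, then at each coordinate $t$ the lift of $x_{s^{-1}t}$ equals the lift of $x_t$, so $sx' = x'$. In particular $x'$ is fixed by $\Gamma_n$. Now $x' \in \ell^\infty(\Gamma, \Rb)$ satisfies $x'f = 0$ and is fixed by the finite-index subgroup $\Gamma_n$, so Lemma~\ref{lem:well-known} applies and $x'$ is constant. Therefore $x$ is constant as well, and $C := 1/\|f\|_1$ works.

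There is no real obstacle here: the argument is essentially the standard ``small lift'' trick plus the already-proven Lemma~\ref{lem:well-known}. The only thing to verify carefully is that the canonical $[-1/2,1/2)$-valued lift is $\Gamma$-equivariant (which it is, since it is defined pointwise by a fixed section $\Rb/\Zb \to [-1/2, 1/2)$), and the strict inequality $\|x' f\|_\infty < 1$ is robust enough to conclude $x' f = 0$ from integrality.
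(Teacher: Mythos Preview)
Your proof is correct and follows essentially the same approach as the paper: take the canonical lift $x'$, use the small-norm hypothesis to force $x'f \in \Zb^\Gamma$ with $\|x'f\|_\infty < 1$ and hence $x'f = 0$, then apply Lemma~\ref{lem:well-known}. The only difference is that you make the constant $C = 1/\|f\|_1$ explicit and spell out the $\Gamma_n$-invariance of $x'$, both of which the paper leaves implicit.
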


\begin{proof}
Let $x'$ be as in Definition~\ref{defn:rho}.
Because $f$ has finite support, there is some number $C>0$ such that if $\|x'\|_\infty=\|x\|_\infty < C$ then $\| x'f\|_\infty < 1$. Since $x\in X_f$, $x'f \in \ell^\infty(\Gamma,\Z)$. So $\|x'f\|_\infty < 1$ implies $x'f = 0$. Because $x'$ is fixed by a finite-index subgroup the previous lemma implies $x'$ is constant and hence $x$ is constant.
\end{proof}

\begin{lemma}\label{lem:lower}
Assume that $\Gamma$ is not virtually $\Zb$ or $\Zb^2$.
Then
$$h_{\Sigma,\lambda}(X_f,\Gamma) \ge \limsup_{n\to \infty} [\Gamma:\Gamma_n]^{-1} \log |\Fix_{\Gamma_n}(X_f)|.$$
\end{lemma}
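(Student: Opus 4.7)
By Theorem~\ref{thm:entropy} it suffices to show that $h_{\Sigma,\lambda,\infty}(\rho) \ge \limsup_n [\Gamma:\Gamma_n]^{-1}\log|\Fix_{\Gamma_n}(X_f)|$, where $\rho$ is the (dynamically generating) pseudo-metric of Definition~\ref{defn:rho}. The plan is to pick, in each connected component of $\Fix_{\Gamma_n}(X_f)$, one representative whose associated equivariant map $\phi_x\colon\Gamma/\Gamma_n\to X_f$ lies in $\Map_\lambda(W,L,\delta,\Gamma_n)$, and then observe that any two such representatives are automatically $\rho_\infty$-separated by a fixed constant.

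First I would identify the connected components of $\Fix_{\Gamma_n}(X_f)$. The subgroup $\mathbb{T}\subset X_f$ of constant functions is contained in $\Fix_{\Gamma_n}(X_f)$, is connected, and, by Lemma~\ref{L-small suprenorm to constant}, contains an open neighborhood of $0$ inside $\Fix_{\Gamma_n}(X_f)$. Hence $\mathbb{T}$ is exactly the identity component, and the connected components of $\Fix_{\Gamma_n}(X_f)$ are the cosets of $\mathbb{T}$; in particular Haar measure $\lambda_n$ assigns each component mass $1/|\Fix_{\Gamma_n}(X_f)|$. Moreover, since $x$ and $y$ are $\Gamma_n$-invariant and $\Gamma_n\lhd\Gamma$, the value $(x-y)_t$ depends only on $t\Gamma_n$, so a direct unpacking gives
\[
\rho_\infty(\phi_x,\phi_y)\;=\;\sup_{s\Gamma_n\in\Gamma/\Gamma_n}\bigl|(x-y)_{s^{-1}}\bigr|\;=\;\|x-y\|_\infty.
\]
If $x,y$ lie in distinct components then $x-y\in\Fix_{\Gamma_n}(X_f)$ is non-constant, so by the contrapositive of Lemma~\ref{L-small suprenorm to constant} we have $\|x-y\|_\infty\ge C$, where $C>0$ is the universal constant from that lemma.

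Now fix a finite $W\subset\Gamma$, a finite $L\subset C(X_f)$, and $\delta>0$. By Lemma~\ref{lem:BAD1}, $\lambda_n(\BAD(W,L,\delta,\Gamma_n))\to 0$. Using that each connected component has Haar mass $1/|\Fix_{\Gamma_n}(X_f)|$, the number of components that consist entirely of bad points is at most $|\Fix_{\Gamma_n}(X_f)|\cdot\lambda_n(\BAD(W,L,\delta,\Gamma_n))$. From each of the remaining components choose a representative $\tilde{x}\notin\BAD(W,L,\delta,\Gamma_n)$; equivariance of $\phi_{\tilde{x}}$ automatically gives $\phi_{\tilde{x}}\in\Map(W,\delta,\Gamma_n)$, and the choice forces $\phi_{\tilde{x}}\in\Map_\lambda(W,L,\delta,\Gamma_n)$. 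The resulting finite set of maps has cardinality at least $|\Fix_{\Gamma_n}(X_f)|(1-\lambda_n(\BAD(W,L,\delta,\Gamma_n)))$, and by the previous paragraph any two distinct such maps are $\rho_\infty$-separated by at least $C$. Taking $\epsilon=C/2$,
\[
[\Gamma:\Gamma_n]^{-1}\log N_\epsilon\bigl(\Map_\lambda(W,L,\delta,\Gamma_n),\rho_\infty\bigr)
\;\ge\;[\Gamma:\Gamma_n]^{-1}\log\bigl(|\Fix_{\Gamma_n}(X_f)|(1-\lambda_n(\BAD(W,L,\delta,\Gamma_n)))\bigr).
\]
Letting $n\to\infty$, then taking the infimum over $W$, $L$, $\delta$, and the supremum over $\epsilon$, yields $h_{\Sigma,\lambda,\infty}(\rho)\ge\limsup_n[\Gamma:\Gamma_n]^{-1}\log|\Fix_{\Gamma_n}(X_f)|$, which combined with Theorem~\ref{thm:entropy} gives the lemma.

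The main obstacle is the bookkeeping in the middle paragraph: one must not confuse ``most points of $\Fix_{\Gamma_n}(X_f)$ are good'' with ``most components have a good representative'', and translation by constants can change whether $\phi_x$ lies in $\Map_\lambda(W,L,\delta,\Gamma_n)$, so one really has to count at the level of cosets of $\mathbb{T}$. The assumption that $\Gamma$ is not virtually $\Z$ or $\Z^2$ enters only through Lemma~\ref{lem:BAD1}, where the ergodicity coming from Theorem~\ref{T-dense homoclinic} is used.
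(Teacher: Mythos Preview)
Your proof is correct and follows essentially the same approach as the paper's: both identify the identity component of $\Fix_{\Gamma_n}(X_f)$ with the constants via Lemma~\ref{L-small suprenorm to constant}, use Lemma~\ref{lem:BAD1} to find good representatives in all but a negligible fraction of the $|\Fix_{\Gamma_n}(X_f)|$ cosets, and obtain $\rho_\infty$-separation by $C$ from the contrapositive of Lemma~\ref{L-small suprenorm to constant}. Your version is slightly more explicit in the counting (using that each coset has $\lambda_n$-mass $1/|\Fix_{\Gamma_n}(X_f)|$) where the paper simply asserts $\lim_n |Y_n|^{-1}|\Fix_{\Gamma_n}(X_f)|=1$ for a maximal set $Y_n$ of good coset representatives, but the substance is identical.
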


\begin{proof}
Let $W \subset \Gamma, L \subset C(X_f)$ be non-empty finite sets and $\delta>0$.
Let us identify $\R/\Z$ with the constant functions (on $\Gamma$) in $X_f$.
It follows easily from Lemma~\ref{L-small suprenorm to constant} that the connected component of $\Fix_{\Gamma_n}(X_f)$ containing the identity element is exactly $\Rb/\Zb$.


Choose a maximal set $Y_n \subset \Fix_{\Gamma_n}(X_f)$ such that $Y_n \cap \BAD(W,L, \delta, \Gamma_n) = \emptyset$ and for each $x \in Y_n$ and $t \in \R/\Z$ with $t\ne 0$, $x+t \notin Y_n$. By Lemma~\ref{lem:BAD1}, $\lim_{n\to \infty} |Y_n|^{-1} |\Fix_{\Gamma_n}(X_f)| = 1$. Let $C>0$ be the constant in Lemma~\ref{L-small suprenorm to constant}. By Lemma~\ref{L-small suprenorm to constant} if $x\ne y \in Y_n$ then $\|x-y\|_\infty \ge C$ which implies $\rho_\infty(\phi_x,\phi_y)\ge C$.

Therefore, if $0<\epsilon<C$ then $\{\phi_y:~y\in Y_n\}$ is $\epsilon$-separated with respect to $\rho_\infty$ which implies
$$ N_\epsilon( \Map_\lambda(W,L,\delta,\Gamma_n), \rho_\infty) \ge |Y_n|.$$
Because $\lim_{n\to \infty} |Y_n|^{-1} |\Fix_{\Gamma_n}(X_f)| = 1$, this implies
$$\limsup_{n\to \infty} [\Gamma:\Gamma_n]^{-1} \log |\Fix_{\Gamma_n}(X_f)| \le h_{\Sigma,\lambda}(X_f,\Gamma).$$
\end{proof}

\begin{lemma} \label{L-amenable lower bound}
Assume that $\Gamma$ is amenable. Then
$$h_{\Sigma,\lambda}(X_f,\Gamma) \ge \limsup_{n\to \infty} [\Gamma:\Gamma_n]^{-1} \log |\Fix_{\Gamma_n}(X_f)|.$$
\end{lemma}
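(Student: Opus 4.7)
The main obstacle compared with Lemma~\ref{lem:lower} is that $\alpha_f$ need not be ergodic in the amenable case: by Remark~\ref{R-integer not ergodic}, when $\Gamma$ is virtually $\Zb$ the action $\alpha_f$ is never ergodic, so Lemma~\ref{lem:BAD1} (whose proof relies crucially on ergodicity) is unavailable. The plan is to split into subcases according to the structure of $\Gamma$ and, in the problematic subcase, bypass ergodicity by routing through topological sofic entropy.

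I would first establish the topological analogue
\[
h_\Sigma(X_f,\Gamma)\ge \limsup_{n\to\infty}[\Gamma:\Gamma_n]^{-1}\log|\Fix_{\Gamma_n}(X_f)|
\]
for every countable group $\Gamma$. This follows the blueprint of Lemma~\ref{lem:lower} but is easier: pick representatives $x_1,\dots,x_{|\Fix_{\Gamma_n}(X_f)|}$ of the connected components of $\Fix_{\Gamma_n}(X_f)$; by Lemma~\ref{L-small suprenorm to constant} the associated equivariant maps $\phi_{x_i}\colon s\Gamma_n\mapsto sx_i$ are pairwise $(\rho_\infty,C)$-separated, and they automatically lie in $\Map(W,\delta,\Gamma_n)$ for all $W,\delta$ because $\phi_{x_i}\circ s=s\circ \phi_{x_i}$. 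No ergodicity is needed, since there is no $\Map_\lambda$ condition to verify.

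Next, in the subcases where $\Gamma$ is either not virtually $\Zb$ nor $\Zb^2$, or is virtually $\Zb^2$, the measure bound follows easily. In the first case Lemma~\ref{lem:lower} applies directly. In the second case Remark~\ref{R-ergodic} supplies the ergodicity of $\alpha_f$ independently of Theorem~\ref{T-dense homoclinic}, so the proofs of Lemmas~\ref{lem:BAD1} and~\ref{lem:lower} go through verbatim once the parenthetical ``or $\Zb^2$'' is deleted from those statements: ergodicity of $\lambda$ is the only property of $\alpha_f$ that was used from Theorem~\ref{T-dense homoclinic}.

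The remaining case is $\Gamma$ virtually $\Zb$. Here I would invoke the theorem that for an amenable group $\Gamma$ acting on a compact metrizable group by continuous automorphisms, the Kolmogorov--Sinai entropy with respect to Haar measure equals the topological entropy (Deninger~\cite{De06}; see also \cite{DS,Li}); combined with the coincidence of sofic and classical entropy for amenable groups (Bowen~\cite{Bo10}, Kerr--Li~\cite{KL11a}), this yields $h_{\Sigma,\lambda}(X_f,\Gamma)=h_\Sigma(X_f,\Gamma)$, and the topological bound established above then supplies the desired inequality. The hard part will be confirming that this $h_{\topol}=h_\lambda$ equality applies in our non-expansive setting, since well-balanced $f$ is not invertible in $\cN\Gamma$ for amenable $\Gamma$ (so $\alpha_f$ is not expansive). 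Should the general amenable equality be hard to invoke directly, a fall-back is to pass to the finite-index subgroup $\Gamma_0\cong\Zb$, where both sides of the sought inequality can be computed explicitly via the Yuzvinskii formula~\cite{Yu65,Yu67} together with the classical Lind--Schmidt--Ward count of periodic points, and then to check that they scale compatibly under the inclusion $\Gamma_0\subset\Gamma$.
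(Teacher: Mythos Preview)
Your proposal is correct, and the argument you give for the virtually $\Zb$ case is exactly the paper's proof---but the paper applies it uniformly to the entire amenable case, with no case-splitting. The paper first observes (as you do) that the proof of Lemma~\ref{lem:lower} yields the topological bound $h_\Sigma(X_f,\Gamma)\ge\limsup_n[\Gamma:\Gamma_n]^{-1}\log|\Fix_{\Gamma_n}(X_f)|$ without any ergodicity hypothesis, since there is no $\Map_\lambda$ condition to check. Then, since $\Gamma$ is amenable, sofic entropy coincides with classical entropy \cite{Bo12,KL11b}, and for a continuous automorphism action of an amenable group on a compact group the topological and Haar-measure entropies agree \cite{Berg,De06}, giving $h_{\Sigma,\lambda}(X_f,\Gamma)=h_\Sigma(X_f,\Gamma)$ directly. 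Your subcases for ``not virtually $\Zb$ or $\Zb^2$'' and ``virtually $\Zb^2$'' are therefore superfluous: the Berg--Deninger equality $h_\lambda=h_{\topol}$ does not require expansiveness or ergodicity, so your concern about the non-expansive setting (and your proposed fall-back via Yuzvinskii) is unnecessary. In short, you found the right key step but only deployed it in one of three cases, when it handles all of them at once.
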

\begin{proof} The argument in the proof of Lemma~\ref{lem:lower} shows that
$$ h_{\Sigma}(X_f,\Gamma) \ge \limsup_{n\to \infty} [\Gamma:\Gamma_n]^{-1} \log |\Fix_{\Gamma_n}(X_f)|.$$
Note that $h_{\Sigma,\lambda}(X_f,\Gamma)$ coincides with the classical measure-theoretic entropy  $h_\lambda(X_f, \Gamma)$ \cite[Theorem 1.2]{Bo12}  \cite[Theorem 6.7]{KL11b}, and $h_{\Sigma}(X_f,\Gamma)$ coincides with the classical topological entropy  $h(X_f, \Gamma)$  \cite[Theorem 5.3]{KL11b}.
Since $\Gamma$ acts on $X_f$ by continuous group automorphism and $\lambda$ is the Haar probability measure of $X_f$, one has $h_\lambda(X_f, \Gamma)=h(X_f, \Gamma)$ \cite{Berg, De06}. Therefore
$$ h_{\Sigma,\lambda}(X_f,\Gamma)=h_\lambda(X_f, \Gamma)=h(X_f, \Gamma)=h_{\Sigma}(X_f,\Gamma)  \ge \limsup_{n\to \infty} [\Gamma:\Gamma_n]^{-1} \log |\Fix_{\Gamma_n}(X_f)|.$$
\end{proof}

\begin{lemma} \label{L-lower bound}
We have
$$ h_{\Sigma,\lambda}(X_f,\Gamma) \ge \limsup_{n\to \infty} [\Gamma:\Gamma_n]^{-1} \log |\Fix_{\Gamma_n}(X_f)|.$$
\end{lemma}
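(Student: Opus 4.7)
The plan is simply to combine Lemma~\ref{lem:lower} and Lemma~\ref{L-amenable lower bound} via a dichotomy on $\Gamma$. Every countably infinite group is either virtually $\Zb$ or virtually $\Zb^2$ (in which case it is amenable), or it is not virtually $\Zb$ or $\Zb^2$.

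In the first case, $\Gamma$ is amenable and Lemma~\ref{L-amenable lower bound} gives the desired inequality directly. In the second case, Lemma~\ref{lem:lower} applies and yields exactly the same inequality. Since these two cases exhaust all possibilities for a countably infinite group, the conclusion follows in full generality.

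I expect no real obstacle here: the heavy lifting has already been done in the two earlier lemmas, each of which required substantive input (density of homoclinic points and ergodicity via Theorem~\ref{T-dense homoclinic} together with the Hausdorff convergence of periodic points from Theorem~\ref{T-dense periodic points} in the non-amenable setting, and the coincidence of sofic entropy with classical entropy plus the Berg--Deninger identification in the amenable setting). The present lemma is just the packaging step that unifies them into one statement valid for every countably infinite $\Gamma$.
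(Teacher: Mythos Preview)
Your proposal is correct and matches the paper's own proof essentially verbatim: the paper also argues by the same dichotomy, noting that if $\Gamma$ is virtually $\Zb$ or $\Zb^2$ then it is amenable (citing \cite[Theorem G.2.1 and Proposition G.2.2]{BHV}), so Lemma~\ref{L-amenable lower bound} applies, while otherwise Lemma~\ref{lem:lower} applies. One minor remark on your commentary: Lemma~\ref{lem:lower} covers all groups not virtually $\Zb$ or $\Zb^2$, including many amenable ones (e.g.\ $\Zb^3$), so calling that case ``the non-amenable setting'' is imprecise---but this does not affect the validity of the argument.
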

\begin{proof} If $\Gamma$ has a finite-index normal subgroup isomorphic to  $\Zb$ or  $\Zb^2$, then $\Gamma$ is amenable \cite[Theorem G.2.1 and Proposition G.2.2]{BHV}. Thus the assertion follows from Lemmas~\ref{lem:lower} and \ref{L-amenable lower bound}.
\end{proof}

\subsection{The upper bound}



Let $U_n$ be the uniform probability measure on $\Gamma/\Gamma_n$. For $x,y\in \R^{\Gamma/\Gamma_n}$, let $\langle x,y\rangle_U$ be the inner product with respect to $U_n$ and $\|x\|_{p,U} := \left([\Gamma: \Gamma_n]^{-1} \sum_{s\Gamma_n \in \Gamma/\Gamma_n} |x_{s\Gamma_n}|^p\right)^{1/p}$ for $p\ge 1$.

For $x\in (\R/\Z)^{\Gamma/\Gamma_n}$, let $x'$ be as in Definition~\ref{defn:rho}.
Let $|x|=|x'|$ and  $\|x\|_{p,U} = \|x'\|_{p,U}$ for $p\ge 1$.
 We will use $\langle \cdot ,\cdot \rangle$ and $\|\cdot\|_p$ to denote the inner product and $\ell^p$-norm with respect to the counting measure.

\begin{lemma} \label{L-diffrent norms}
For any $x\in (\R/\Z)^{\Gamma/\Gamma_n}$,
$$\|x\|_{2,U}^2 \le \|x \|_{1,U} \le  \|x\|_{2,U}\le 1/2.$$
\end{lemma}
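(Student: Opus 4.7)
The plan is to exploit the defining fact that $|x_{s\Gamma_n}| = |x'_{s\Gamma_n}| \le 1/2$ for every coordinate (since $x'_{s\Gamma_n} \in [-1/2,1/2)$ by Definition~\ref{defn:rho}), together with standard inequalities between $L^1$ and $L^2$ norms on a probability space, in this case $(\Gamma/\Gamma_n, U_n)$.

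First I would establish the rightmost inequality $\|x\|_{2,U}\le 1/2$. By definition,
$$\|x\|_{2,U}^2 = [\Gamma:\Gamma_n]^{-1}\sum_{s\Gamma_n\in\Gamma/\Gamma_n}|x_{s\Gamma_n}|^2,$$
and since each term satisfies $|x_{s\Gamma_n}|^2\le 1/4$, the average is at most $1/4$, giving $\|x\|_{2,U}\le 1/2$.

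Next I would prove the middle inequality $\|x\|_{1,U}\le \|x\|_{2,U}$ by a direct application of the Cauchy--Schwarz inequality on the probability space $(\Gamma/\Gamma_n,U_n)$: pairing $|x|$ with the constant function $1$ gives $\|x\|_{1,U}=\int|x|\,dU_n\le \bigl(\int|x|^2\,dU_n\bigr)^{1/2}\bigl(\int 1\,dU_n\bigr)^{1/2}=\|x\|_{2,U}$.

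Finally, for the leftmost inequality $\|x\|_{2,U}^2\le \|x\|_{1,U}$, I would use pointwise domination: since $|x_{s\Gamma_n}|\le 1/2\le 1$, one has $|x_{s\Gamma_n}|^2\le |x_{s\Gamma_n}|$ for every $s\Gamma_n$. Averaging over $\Gamma/\Gamma_n$ with respect to $U_n$ yields $\|x\|_{2,U}^2\le \|x\|_{1,U}$. There is no real obstacle here; the lemma is a bookkeeping statement that packages the fact that $x'$ takes values in $[-1/2,1/2)$, which will be used later to compare $L^1$-, $L^2$- and pointwise smallness when passing between $X_f$ and its finite approximants.
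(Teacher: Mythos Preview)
Your proof is correct. For the leftmost and rightmost inequalities your argument matches the paper's exactly (both follow from $\|x\|_\infty\le 1/2$). For the middle inequality $\|x\|_{1,U}\le\|x\|_{2,U}$ you take the cleaner route: you apply Cauchy--Schwarz directly on the probability space $(\Gamma/\Gamma_n,U_n)$, pairing $|x|$ with the constant $1$. The paper instead expands $\|x\|_1^2$ as a double sum, rewrites it as $\sum_{t\Gamma_n}\langle |x|,|x\circ t\Gamma_n|\rangle$, bounds each inner product by $\|x\|_2^2$ via Cauchy--Schwarz, and then converts back to the $U$-normalized norms. Both arguments are valid and ultimately rest on Cauchy--Schwarz; yours is the standard $L^1\le L^2$ inequality on a probability space and is more direct, while the paper's detour through translations is unnecessary for this lemma.
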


\begin{proof}
This first inequality is immediate from $\|x\|_{\infty} \le 1/2$. Note
\begin{eqnarray*}
\|x\|_1^2 &=& \sum_{s\Gamma_n \in \Gamma/\Gamma_n} \sum_{t\Gamma_n \in \Gamma/\Gamma_n}  |x_{s\Gamma_n}|  |x_{t\Gamma_n}| = \sum_{s\Gamma_n \in \Gamma/\Gamma_n} \sum_{t\Gamma_n \in \Gamma/\Gamma_n}  |x_{s\Gamma_n}|  |x_{ts\Gamma_n}| \\
&=& \sum_{t\Gamma_n \in \Gamma/\Gamma_n} \langle |x|, |x \circ t\Gamma_n|\rangle.
\end{eqnarray*}
By the Cauchy-Schwarz inequality, for any $t\in \Gamma$,
$$\langle |x|, |x \circ t\Gamma_n|\rangle \le \|x\|_2 \|x \circ t\Gamma_n\|_2 = \|x\|_2^2.$$
Hence
$$\|x\|_1^2 \le [\Gamma:\Gamma_n] \|x\|_2^2.$$
Since $\|x\|_{1,U} = [\Gamma:\Gamma_n]^{-1}\|x\|_1$ and $\|x\|_{2,U}^2 =[\Gamma:\Gamma_n]^{-1}  \|x\|_2^2$, this implies the second inequality. The last one follows from $\|x\|_{2,U} \le \|x\|_\infty \le 1/2$.
\end{proof}

Recall from Section~\ref{S-notation} that for a countable group $\Gamma'$ and $g\in \Rb\Gamma'$,
if $g$ is positive in $\cN\Gamma'$, then we have the spectral measure of $g$ on
$[0, \|R_g\|]\subset [0, \|g\|_1]$ determined by \eqref{E-spectral measure}.
For each $n\in \Nb$ we denote by $\pi_n$ the natural algebra homomorphism $\Rb\Gamma\rightarrow \Rb(\Gamma/\Gamma_n)$.

\begin{lemma} \label{L-bound preimage}
Let $g\in \Rb\Gamma$ such that
the kernel of $g$ on $\ell^2(\Gamma, \Cb)$ is $\{0\}$,
and $\pi_n(g)$ is positive in $\cN(\Gamma/\Gamma_n)$ for all $n\in \Nb$.
For each $n\in \Nb$ and $\eta>0$ denote by $B_{n, \eta}$ the set of $x\in \Rb(\Gamma/\Gamma_n)$ satisfying
$\|x\pi_n(g)\|_{2, U}\le \eta$ and $\|P_n(x)\|_{2, U}\le 1$, where $P_n$ denotes the orthogonal projection from $\ell^2(\Gamma/\Gamma_n, \Cb)$ onto $\ker \pi_n(g)$.
For each $n\in \Nb$ denote by $\mu_n$ the spectral measure of $\pi_n(g)$ on $[0, \|g\|_1]$.
Let $\zeta>1$, $1>\varepsilon>0$, and $1/2>\kappa>0$. Then
there exists $\eta>0$ such that when $n\in \Nb$ is large enough, one has
$$N_\varepsilon(B_{n, \eta}, \|\cdot\|_{2, U})<\zeta^{[\Gamma: \Gamma_n]}\exp(-[\Gamma: \Gamma_n]\int_{0+}^{\kappa}\log t \, d\mu_n(t))$$
where $N_\varepsilon(\cdot,\cdot)$ is as in Definition \ref{defn:separating}.
\end{lemma}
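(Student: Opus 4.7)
The plan is to diagonalise the positive operator $\pi_n(g)$ in an orthonormal eigenbasis of $\ell^2(\Gamma/\Gamma_n,\Cb)$, with eigenvalues $0=\lambda_1=\cdots=\lambda_{m_1}<\lambda_{m_1+1}\le\cdots\le\lambda_N$ where $N=[\Gamma:\Gamma_n]$, and to rescale coordinates by $\sqrt N$ so that $\|\cdot\|_{2,U}$ becomes the standard Euclidean norm. I will then split the eigenvalues into three groups: $m_1$ zero eigenvalues, $m_2$ eigenvalues in $(0,\kappa]$, and $m_3$ eigenvalues in $(\kappa,\|g\|_1]$. Because the constraint $\sum d_i^2\lambda_i^2\le\eta^2$ forces both the small-index and the large-index partial sums to be at most $\eta^2$, the set $B_{n,\eta}$ sits inside a product $B_{\mathrm{ker}}\times E_s\times E_\ell$, where $B_{\mathrm{ker}}$ is the Euclidean unit ball in $\Rb^{m_1}$, $E_s$ is the ellipsoid in $\Rb^{m_2}$ with semi-axes $\eta/\lambda_i$ for $0<\lambda_i\le\kappa$, and $E_\ell$ is the analogous ellipsoid in $\Rb^{m_3}$.

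Next I would bound $N_\varepsilon(B_{n,\eta},\|\cdot\|_{2,U})$ by the product of three covering numbers, via the standard inequality that an $(\varepsilon/\sqrt 3)$-cover on each of three orthogonal factors yields an $\varepsilon$-cover of the product. For $E_\ell$, whose Euclidean diameter is at most $2\eta/\kappa$, I will fix $\eta$ small enough (of order $\kappa\varepsilon$) that a single ball covers it. For $B_{\mathrm{ker}}$, the standard packing of the unit ball yields at most $(C/\varepsilon)^{m_1}$ covering balls for an absolute constant $C$; to make this subexponential in $N$ I will invoke that $m_1/N=\mu_n(\{0\})\to 0$. This key ingredient follows from the trace identities $\tr_{\cN(\Gamma/\Gamma_n)}(\pi_n(g)^k)=\tr_{\cN\Gamma}(g^k)$ (valid once $n$ is so large that $\Gamma_n$ avoids the support of $g^k$), which yield weak convergence $\mu_n\to\mu$; combined with the hypothesis $\ker g=\{0\}$ in $\cN\Gamma$ (so $\mu(\{0\})=0$) and the Portmanteau theorem applied to the closed singleton $\{0\}$, one obtains $\limsup_n\mu_n(\{0\})\le\mu(\{0\})=0$.

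The heart of the argument will be the covering bound for $E_s$. I will use an ellipsoid covering estimate of the form $N^{\mathrm{cov}}_{\varepsilon/\sqrt 3}(E_s,\ell^2)\le\prod_{0<\lambda_i\le\kappa}\bigl(1+c\eta/(\lambda_i\varepsilon)\bigr)$ for an appropriate absolute constant $c$. Rewriting, this equals $\prod(\lambda_i+c\eta/\varepsilon)\cdot\prod\lambda_i^{-1}$, where the second factor is precisely $\exp(-N\int_{0+}^\kappa\log t\,d\mu_n(t))$. Since every $\lambda_i\le\kappa<1/2$, choosing $\eta$ so small that $c\eta/\varepsilon+\kappa<\zeta_0$ for some fixed $\zeta_0\in(\kappa,\zeta)$ forces each factor $\lambda_i+c\eta/\varepsilon$ below $\zeta_0$, so their product is at most $\zeta_0^{m_2}\le\zeta_0^N$. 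Multiplying the three covering bounds will then give $N_\varepsilon(B_{n,\eta},\|\cdot\|_{2,U})\le(C/\varepsilon)^{m_1}\cdot\zeta_0^N\prod_{0<\lambda_i\le\kappa}\lambda_i^{-1}$, and picking $n$ so large that $(C/\varepsilon)^{m_1/N}<\zeta/\zeta_0$ pushes the total strictly below $\zeta^N\prod\lambda_i^{-1}$, as required.

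The hard part will be establishing the ellipsoid covering estimate with clean enough constants. The naive volume bound $N^{\mathrm{cov}}_{\delta}(E_s,\ell^2)\le\mathrm{Vol}(E_s+(\delta/2)B)/\mathrm{Vol}((\delta/2)B)$ picks up dimension-dependent constants of order $2^{m_2}$ (since $E_s+rB$ is not itself an ellipsoid), which would spoil the argument whenever $\zeta$ is close to $1$. The clean per-axis product estimate $\prod(1+c\eta/(\lambda_i\varepsilon))$ must instead be obtained by exploiting the eigenbasis structure, for instance via an axis-aligned adaptive grid or via a direct Steiner expansion for the Minkowski sum; it is this sharpness, together with the decay $m_1/N\to 0$ coming from $\mu(\{0\})=0$, that makes the conclusion valid for every $\zeta>1$.
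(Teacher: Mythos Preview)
Your three-part decomposition (kernel, eigenvalues in $(0,\kappa]$, eigenvalues in $(\kappa,\|g\|_1]$) and the handling of the first and third pieces are essentially identical to the paper's argument. For $m_1/N\to 0$ you reprove L\"uck's approximation result via moment convergence and Portmanteau, while the paper cites \cite[Theorem~2.3]{Luck94} directly; both are fine.

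The only substantive difference is your treatment of the ellipsoid $E_s$, and here you are making the problem harder than it is. You worry that the volume bound $N_\varepsilon^{\mathrm{sep}}(E_s)\le \mathrm{Vol}(E_s+(\varepsilon/2)B)/\mathrm{Vol}((\varepsilon/2)B)$ picks up a $2^{m_2}$ factor because the Minkowski sum is not an ellipsoid, and you therefore propose to chase the sharper bound $\prod_i(1+c\,a_i/\varepsilon)$. But this worry is misplaced: since every semi-axis $a_i=\eta/\lambda_i$ of $E_s$ satisfies $a_i\ge \eta/\kappa$, one has $B\subset(\kappa/\eta)E_s$, hence $E_s+(\varepsilon/2)B\subset (1+\varepsilon\kappa/(2\eta))E_s$. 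The naive volume ratio then gives
\[
N^{\mathrm{sep}}_{\varepsilon/6}(E_s)\;\le\;\Bigl(\tfrac{12\eta}{\varepsilon}+\kappa\Bigr)^{m_2}\prod_{0<\lambda_i\le\kappa}\lambda_i^{-1},
\]
and choosing $\eta\le\kappa\varepsilon/12$ forces the base $12\eta/\varepsilon+\kappa\le 2\kappa<1$, so this piece contributes at most $\prod\lambda_i^{-1}=\exp(-N\int_{0+}^\kappa\log t\,d\mu_n)$ with \emph{no} extra constant at all. Equivalently, this is exactly what the paper does by applying the map $\pi_n(g)$ to send $E_s$ to a ball of radius $\eta$ and the $\varepsilon/12$-balls to ellipsoids contained in $\kappa\varepsilon/12$-balls, then comparing volumes. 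The entire $\zeta^N$ slack is absorbed by the kernel piece alone; you do not need a $\zeta_0$, an adaptive grid, or a Steiner expansion.
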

\begin{proof}
Since $\pi_n(g)$ is positive in $\cN(\Gamma/\Gamma_n)$, one has $(\pi_n(g))^*=\pi_n(g)$.

Let $Y_n$ be a maximal $(\|\cdot\|_{2, U},\varepsilon/6)$-separated subset of the closed unit ball of $\ker \pi_n(g)$ under $\|\cdot \|_{2, U}$. Then
the open $\varepsilon/12$-balls centered at $y$ under $\|\cdot\|_{2, U}$ for all $y\in Y_n$ are pairwise disjoint, and their union is  contained in the open $2$-ball of $\ker \pi_n(g)$ under $\|\cdot \|_{2, U}$. Comparing the volumes we obtain $|Y_n|\le (24/\varepsilon)^{\dim_{\Rb} \ker \pi_n(g)}$.

For each $n\in \Nb$
denote by $V_{n, \kappa}$ the linear span of the eigenvectors of $\pi_n(g)$ $\ell^2(\Gamma/\Gamma_n, \Cb)$ with eigenvalue no bigger than $\kappa$.
Note that $V_{n, 0}=\ker \pi_n(g)$.
Since $\ker (g^*g)=\ker(g)=0$, by a result of L\"{u}ck \cite[Theorem 2.3]{Luck94}
(it was assumed in \cite{Luck94} that $\Gamma_n\supset \Gamma_{n+1}$ for all $n\in \Nb$; but the argument there holds in general), one has
$\lim_{n\to \infty}[\Gamma: \Gamma_n]^{-1}\dim_{\Cb} \ker \pi_n(g)=\lim_{n\to \infty}[\Gamma: \Gamma_n]^{-1}\dim_{\Cb} \ker \pi_n(g^*g)=0$. It follows that $|Y_n|\le \zeta^{[\Gamma:\Gamma_n]}$ when $n$ is large enough.

Denote by $P_{n, \kappa}$ the orthogonal projection of $\ell^2(\Gamma/\Gamma_n, \Cb)$ onto $V_{n, \kappa}$. Set $\eta=\min(\varepsilon/24, \kappa \varepsilon /12)$.
Note that for each $x\in \Rb(\Gamma/\Gamma_n)$ one has
$$\|x\pi_n(g)\|_{2, U}^2=\|(P_{n, \kappa}(x))\pi_n(g)\|_{2, U}^2+\|(x-P_{n, \kappa}(x))\pi_n(g)\|_{2, U}^2\ge \kappa^2\|x-P_{n, \kappa}(x)\|_{2, U}^2.$$
Thus $\|x-P_{n, \kappa}(x)\|_{2, U}\le \eta/\kappa\le \varepsilon/12$ for every $x\in B_{n, \eta}$.
 Then every two points in $({\rm Id}-P_{n, \kappa})(B_{n, \eta})$ have $\|\cdot \|_{2, U}$-distance at most $\varepsilon/6$. Let $X_n$ be a one-point subset of $({\rm Id}-P_{n, \kappa})(B_{n, \eta})$. Then $X_n$ is a maximal $(\varepsilon/6)$-separated subset of $({\rm Id}-P_{n, \kappa})(B_{n, \eta})$ under $\|\cdot \|_{2, U}$.

Denote by $E_{n, \kappa}$ the ordered set of all eigenvalues of $\pi_n(g)$ in $(0, \kappa]$ listed with multiplicity.
Let $Z_n$ be a maximal $(\varepsilon/6)$-separated subset of $(P_{n, \kappa}-P_{n, 0})(B_{n, \eta})$ under $\|\cdot \|_{2, U}$.
For each $z\in Z_n$ denote by $B_z$ the open ball centered at $z$ with radius $\varepsilon/12$ under $\|\cdot \|_{2, U}$. Note that $\|x\pi_n(g)\|_{2, U}\le \kappa\|x\|_{2, U}$ for all $x\in V_{n, \kappa}\ominus V_{n, 0}$. Thus every element in $(\bigcup_{z\in Z_n}B_z)\pi_n(g)$ has $\|\cdot \|_{2, U}$-norm at most $\eta+\kappa \varepsilon/12$.  The volume of $(\bigcup_{z\in Z_n}B_z)\pi_n(g)$ is $\det(\pi_n(g)|_{V_{n, \kappa}\ominus V_{n, 0}})=\prod_{t\in E_{n, \kappa}}t$ times the volume of $\bigcup_{z\in Z_n}B_z$. It follows that
$$ |Z_n| \prod_{t\in E_{n, \kappa}} t\le \left(\frac{\eta+\kappa \varepsilon/12}{\varepsilon/12}\right)^{\dim_\Rb (V_{n, \kappa}\ominus V_{n, 0})}=\left(\frac{12\eta+\kappa \varepsilon}{\varepsilon}\right)^{\dim_\Rb (V_{n, \kappa}\ominus V_{n, 0})}\le 1.$$

Note that for every $t\in [0, \|g\|_1]$,  the measure $\mu_n(\{t\})$ is exactly $[\Gamma:\Gamma_n]^{-1}$ times the multiplicity of $t$ as an eigenvalue of $\pi_n(g)$.
When $n\in \Nb$ is sufficiently large, we have
\begin{align*}
N_{\varepsilon}(B_{n, \eta}, \|\cdot \|_{2, U})&\le |X_n|\cdot |Y_n|\cdot |Z_n| \le \zeta^{[\Gamma: \Gamma_n]}\prod_{t\in E_{n, \kappa}}t^{-1} \\
&=\zeta^{[\Gamma: \Gamma_n]}\exp\left(-[\Gamma: \Gamma_n]\int_{0+}^{\kappa}\log t \, d\mu_n(t)\right)
\end{align*}
as desired.
\end{proof}

\begin{lemma} \label{L-measure weak convergence}
Let $g\in \Rb\Gamma$ be such that
$g$ is positive in $\cN\Gamma$, and $\pi_n(g)$ is positive in $\cN(\Gamma/\Gamma_n)$ for all $n\in \Nb$.
Denote by $\mu$ the spectral measure of $g$ on $[0, \|g\|_1]$.
For each $n\in \Nb$ denote by $\mu_n$ the spectral measure of $\pi_n(g)$ on $[0, \|g\|_1]$.
Let $\min(1, \|g\|_1)>\kappa>0$. Then
$$ \limsup_{n\to \infty}\int_{\kappa+}^{\|g\|_1}\log t \, d\mu_n(t)\le \int_{\kappa+}^{\|g\|_1}\log t \, d\mu(t).$$
\end{lemma}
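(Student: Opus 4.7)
The plan is to deduce the inequality from weak convergence $\mu_n \to \mu$ on the compact interval $[0, \|g\|_1]$ combined with upper semi-continuity of the integrand, so that the $\limsup$ bound is exactly the one produced by the Portmanteau theorem in its upper semi-continuous form.

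First I would establish weak convergence of the spectral measures. By \eqref{E-spectral measure} and the definition of $\tr_{\cN\Gamma}$, the $k$-th moment of $\mu$ equals $\tr_{\cN\Gamma}(g^k) = (g^k)_e$ under the identification $\Rb\Gamma \hookrightarrow \cN\Gamma$. Similarly, the $k$-th moment of $\mu_n$ equals $\tr_{\cN(\Gamma/\Gamma_n)}(\pi_n(g)^k) = \pi_n(g^k)_{e\Gamma_n} = \sum_{s\in \Gamma_n} (g^k)_s$. Since $g$, and hence each $g^k$, has finite support, the residual finiteness condition $\bigcap_{m\in \Nb}\bigcup_{i\ge m} \Gamma_i = \{e\}$ implies that for any fixed $k$ and all sufficiently large $n$ one has $\Gamma_n \cap \supp(g^k) \subseteq \{e\}$, so the $k$-th moment of $\mu_n$ equals $(g^k)_e$ for $n$ large. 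Consequently $\int t^k \, d\mu_n(t) \to \int t^k \, d\mu(t)$ for every $k$. Because all $\mu_n$ and $\mu$ are supported on the common compact interval $[0, \|g\|_1]$, Stone--Weierstrass upgrades convergence on polynomials to weak convergence $\mu_n \to \mu$ on $[0, \|g\|_1]$.

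Next I would apply Portmanteau for upper semi-continuous functions. Define
\begin{equation*}
f(t) := \log t \cdot 1_{(\kappa, \|g\|_1]}(t)
\end{equation*}
on $[0, \|g\|_1]$. This is bounded (since $\log \kappa \le f(t) \le \log\|g\|_1$ on the support of $f$, and $f \equiv 0$ elsewhere) and upper semi-continuous: it is continuous at every $t \ne \kappa$, while at $t=\kappa$ we have $f(\kappa)=0$ and the only nontrivial one-sided limit is $\lim_{t\to \kappa^+} f(t) = \log \kappa < 0$ by the hypothesis $\kappa<1$, so $\limsup_{t\to \kappa} f(t) \le 0 = f(\kappa)$. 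The Portmanteau theorem for bounded u.s.c.\ functions on a compact metric space then gives $\limsup_n \int f\, d\mu_n \le \int f\, d\mu$, which is exactly the desired inequality.

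The main obstacle is precisely what forces the statement to be a $\limsup$ rather than an equality: the integrand has a downward jump at $t=\kappa$, and mass of $\mu_n$ accumulating at $\kappa$ from the right would contribute $\log\kappa<0$ to $\int f\, d\mu_n$ without being matched by mass of $\mu$ at $\kappa$, so the continuous-function form of weak convergence is insufficient. The hypothesis $\kappa < \min(1,\|g\|_1)$ ensures that the jump is in the favorable direction for the u.s.c.\ Portmanteau bound, and this is the only place it is used.
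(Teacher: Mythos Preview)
Your proof is correct. The paper's proof follows the same underlying mechanism---convergence of polynomial traces $\tr_{\cN(\Gamma/\Gamma_n)}(p(\pi_n(g)))\to \tr_{\cN\Gamma}(p(g))$ (equivalently, moment convergence) coming from finite support plus the residual finiteness condition---but instead of invoking weak convergence and the Portmanteau theorem as black boxes, it constructs by hand a continuous function $q_k$ approximating $\log t\cdot 1_{(\kappa,\|g\|_1]}$ from above, sandwiches it by a polynomial $p$, and passes through the trace identity directly. Your route is a clean repackaging of the same argument: the polynomial approximation step becomes Stone--Weierstrass, and the $q_k$ construction becomes the u.s.c.\ form of Portmanteau; nothing is gained or lost beyond concision.
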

\begin{proof} It suffices to show
$$ \limsup_{n\to \infty}\int_{\kappa+}^{\|g\|_1}\log t \, d\mu_n(t)\le \eta(1+\|g\|_1)+\int_{\kappa+}^{\|g\|_1}\log t \, d\mu(t).$$
for every $\eta>0$. Let $\eta>0$.

For each sufficiently large $k\in \Nb$ define a real-valued continuous function $q_k$ on $[0, \|g\|_1]$ to be $0$ on $[0, \kappa]$,
$\log t$ at $t\in [\kappa+1/k, \|g\|_1]$, and linear on $[\kappa, \kappa+1/k]$. By the Lebesgue dominated convergence theorem one has $\int_{\kappa+}^{\|g\|_1} q_k(t)\, d\mu(t)\to \int_{\kappa+}^{\|g\|_1}\log t\, d\mu(t)$ as $k\to \infty$. Fix $k\in \Nb$ with $\int_{\kappa+}^{\|g\|_1} q_k(t)\, d\mu(t)\le \int_{\kappa+}^{\|g\|_1}\log t\, d\mu(t)+\eta$, and take
 a real-coefficients polynomial $p$ such that $q_k+\eta \ge p\ge q_k$ on $[0, \|g\|_1]$. Then $p(t)\ge q_k(t)\ge \log t$ for all $t\in (0, \|g\|_1]$, and
\begin{align*}
\tr_{\cN\Gamma}(p(g))&\overset{\eqref{E-spectral measure}}= \int_{0}^{\|g\|_1}p(t) \, d\mu(t) \le \eta \|g\|_1 +\int_{0}^{\|g\|_1}q_k(t) \, d\mu(t) \\
&= \eta \|g\|_1 +\int_{\kappa+}^{\|g\|_1}q_k(t) \, d\mu(t) \le \eta(1+\|g\|_1)+\int_{\kappa+}^{\|g\|_1}\log t \, d\mu(t).
\end{align*}

When $n\in \Nb$ is large enough, one has $\tr_{\cN(\Gamma/\Gamma_n)}(p(\pi_n(g)))=\tr_{\cN\Gamma}(p(g))$ \cite[Lemma 2.6]{Luck94}, whence
$$ \tr_{\cN\Gamma}(p(g))=\tr_{\cN(\Gamma/\Gamma_n)}(p(\pi_n(g)))\overset{\eqref{E-spectral measure}}=\int_{0}^{\|g\|_1}p(t) \, d\mu_n(t)\ge  \int_{\kappa+}^{\|g\|_1}p(t) \, d\mu_n(t)\ge \int_{\kappa+}^{\|g\|_1}\log t \, d\mu_n(t) .$$
Thus
\begin{align*}
\limsup_{n\to \infty}\int_{\kappa+}^{\|g\|_1}\log t \, d\mu_n(t)\le \tr_{\cN\Gamma}(p(g))\le \eta(1+\|g\|_1)+\int_{\kappa+}^{\|g\|_1}\log t \, d\mu(t)
\end{align*}
as desired.
\end{proof}

\begin{lemma} \label{L-upper bound}
We have
$$h_{\Sigma}(X_f,\Gamma) \le \log \ddet_{\cN\Gamma} f.$$
\end{lemma}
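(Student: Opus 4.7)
By Theorem~\ref{thm:entropy} the pseudo-metric $\rho$ of Definition~\ref{defn:rho} is dynamically generating, so $h_\Sigma(X_f,\Gamma)=h_{\Sigma,2}(\rho)$, and it suffices to bound the $\rho_2$-separation number of $\Map(W,\delta,\Gamma_n)$. The plan is to establish the factorization
\[
N_\varepsilon(\Map(W,\delta,\Gamma_n),\rho_2)\ \le\ |\Fix_{\Gamma_n}(X_f)|\ \cdot\ \zeta^{[\Gamma:\Gamma_n]}\exp\Bigl(-[\Gamma:\Gamma_n]\int_{0+}^{\kappa}\log t\,d\mu_n(t)\Bigr),
\]
in which the first factor counts ``periodic-point labels'' attached to each $\phi$ and the second is the bound supplied by Lemma~\ref{L-bound preimage}. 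Taking logarithms, dividing by $[\Gamma:\Gamma_n]$, and then letting $n\to\infty$ (using Theorem~\ref{T-det vs number of fixed point} and Lemma~\ref{L-measure weak convergence}), $\kappa\to 0^+$ (using $\int_{0+}^{\kappa}\log t\,d\mu(t)\to 0$, since $\mu$ has no atom at $0$ because the graph Laplacian $f$ on the connected infinite Cayley graph $C(\Gamma,f)$ has trivial $\ell^2(\Gamma,\Cb)$-kernel), and finally $\zeta\to 1$, will yield $h_{\Sigma,2}(\rho)\le\log\ddet_{\cN\Gamma}f$.

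To set this up, for $\phi\in\Map(W,\delta,\Gamma_n)$ define the real lift $\tilde\phi\in[-1/2,1/2)^{\Gamma/\Gamma_n}$ by $\tilde\phi(s\Gamma_n)=(\phi(s\Gamma_n)_e)'$. Take $W$ symmetric and containing $\supp(f)\cup\{e\}$, and $n$ large enough that $\pi_n$ is injective on $\supp(f)$. Using approximate equivariance at each $s\in\supp(f)^{\pm}$ together with the identity $(yf)_e=\sum_s y_s f_s=0$ for $y\in X_f$ (which uses $f=f^*$), a direct Cauchy--Schwarz computation yields $z_\phi\in\Zb^{\Gamma/\Gamma_n}$ with $\|\tilde\phi\,\pi_n(f)-z_\phi\|_{2,U}\le\eta=O(\delta)$. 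Since $\sum_s f_s=0$, $z_\phi$ lies in $V_0:=\{v\in\Rb^{\Gamma/\Gamma_n}:\sum_{s\Gamma_n}v_{s\Gamma_n}=0\}$, and the matrix-tree identity (Lemmas~\ref{lem:matrix-tree} and \ref{lem:harmonic points}) shows that $(\Zb^{\Gamma/\Gamma_n}\cap V_0)/\pi_n(f)(\Zb^{\Gamma/\Gamma_n})$ is a finite group of order $|\Fix_{\Gamma_n}(X_f)|$, so the class $[z_\phi]$ is the desired periodic-point label. For $\phi_1,\phi_2$ in the same fiber, write $z_{\phi_1}-z_{\phi_2}=\pi_n(f)(m)$ with $m\in\Zb^{\Gamma/\Gamma_n}$ normalized (by adding an integer multiple of $1\in\ker\pi_n(f)$) so that $|[\Gamma:\Gamma_n]^{-1}\sum_{s\Gamma_n}m_{s\Gamma_n}|\le 1/2$; then $\tilde\phi_1-\tilde\phi_2-m$ satisfies $\|(\tilde\phi_1-\tilde\phi_2-m)\pi_n(f)\|_{2,U}\le 2\eta$ and $\|P_n(\tilde\phi_1-\tilde\phi_2-m)\|_{2,U}\le 2$ (since $\ker\pi_n(f)=\Rb\cdot 1$ is one-dimensional, the Cayley graph $C_n^f$ being connected), placing it in a bounded dilation of the set $B_{n,2\eta}$ of Lemma~\ref{L-bound preimage}. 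Because subtracting an integer vector preserves the $\Rb/\Zb$-distance in each coordinate and the real norm dominates the $\Rb/\Zb$-distance, $\rho_2(\phi_1,\phi_2)\le\|\tilde\phi_1-\tilde\phi_2-m\|_{2,U}$; thus an $\varepsilon$-separated family under $\rho_2$ within a single fiber injects into an $\varepsilon$-separated subset of that dilation, and Lemma~\ref{L-bound preimage} (applicable since $\pi_n(f)$ is positive as a graph Laplacian and $f$ has trivial $\ell^2$-kernel) controls its size.

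The main obstacle is the normalization of the integer vector $m$: it must be chosen so that $\tilde\phi_1-\tilde\phi_2-m$ has bounded projection onto $\ker\pi_n(f)$ (to invoke Lemma~\ref{L-bound preimage}) while simultaneously the real norm $\|\tilde\phi_1-\tilde\phi_2-m\|_{2,U}$ still dominates $\rho_2(\phi_1,\phi_2)$. The one-dimensional kernel $\Rb\cdot 1$ of $\pi_n(f)$ supplies exactly the slack required to reconcile both constraints.
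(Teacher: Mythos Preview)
Your overall architecture matches the paper's: lift $\phi$ to a real-valued $\tilde\phi$, show $\tilde\phi\,\pi_n(f)$ is close to an integer vector, use the class of that integer vector modulo $\pi_n(f)(\Zb^{\Gamma/\Gamma_n})$ as a finite label, and within each label control the $\rho_2$-separation by Lemma~\ref{L-bound preimage}. The packaging (labeling by $|\Fix_{\Gamma_n}(X_f)|$ classes via the quotient $(\Zb^{\Gamma/\Gamma_n}\cap V_0)/\pi_n(f)(\Zb^{\Gamma/\Gamma_n})$ rather than by $\det^*(\pi_n(f))$ classes via $\Zb_0/\Zb_0\pi_n(f)$, and then appealing to Theorem~\ref{T-det vs number of fixed point}) is a harmless variant.

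However, there is a genuine gap at the sentence ``Since $\sum_s f_s=0$, $z_\phi$ lies in $V_0$.'' What $\sum_s f_s=0$ gives you is that $\tilde\phi\,\pi_n(f)\in V_0$; it says nothing about the nearest integer vector $z_\phi$. Indeed, from $\|\tilde\phi\,\pi_n(f)-z_\phi\|_{2,U}\le\eta$ and Lemma~\ref{L-diffrent norms} you only get $|S_n(z_\phi)|\le\eta\,[\Gamma:\Gamma_n]$, which is typically much larger than $1$, so $z_\phi\notin V_0$ in general. Without $z_\phi\in V_0$ the class $[z_\phi]$ in your finite quotient is undefined and the labeling scheme collapses.

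The paper handles exactly this point: after obtaining $z_\phi$ it introduces a correction $z'_\phi\in\{-1,0,1\}^{\Gamma/\Gamma_n}$ with $\|z'_\phi\|_1\le\|y_\phi\pi_n(f)\|_1$ and $S_n(z_\phi+z'_\phi)=0$, so that $z_\phi+z'_\phi\in V_0$. The cost is that $\|z'_\phi\|_{2,U}$ is only controlled by $\|z'_\phi\|_{1,U}^{1/2}$ (again via Lemma~\ref{L-diffrent norms}), which forces the final smallness parameter to be $O(\delta^{1/2})$ rather than $O(\delta)$. This square-root loss is harmless for the argument but is the precise step your sketch is missing. Once you insert this correction, the rest of your outline goes through and coincides with the paper's proof.
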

\begin{proof} Let $\rho$ be the pseudo-metric on $X_f$ defined as in Definition \ref{defn:rho}. Let $\zeta>1$ and $1/2>\kappa>0$. Let $1>\varepsilon>0$.
Denote by $W$ the support of $f$. An argument similar to that in the proof of Lemma~\ref{lem:well-known} shows that the kernel of $f$ on $\ell^2(\Gamma, \Cb)$ is $\{0\}$. Note that $f\ge 0$ in $\cN\Gamma$ and
$\pi_n(f)\ge 0$ in $\cN(\Gamma/\Gamma_n)$ for all $n\in \Nb$.
Take $\eta>0$ in Lemma~\ref{L-bound preimage} for $g=f$. Take $\delta>0$ such that
$2\|f\|_2^{1/2}|W|^{1/4}\delta^{1/2}<\eta$.

For $\phi \in \Map(W,\delta,\Gamma_n)$ define $y_\phi \in (\R/\Z)^{\Gamma/\Gamma_n}$  by $y_\phi(s\Gamma_n) = \phi(s^{-1}\Gamma_n)_e$. Note that
\begin{eqnarray*}
\|y_\phi \pi_n(f)\|_{2,U}^2&=&[\Gamma:\Gamma_n]^{-1} \sum_{s\Gamma_n \in \Gamma/\Gamma_n}  \left| \sum_{t\in W} y_\phi(st\Gamma_n) f(t^{-1})\right|^2 \\
&=& [\Gamma:\Gamma_n]^{-1} \sum_{s\Gamma_n \in \Gamma/\Gamma_n}  \left| \sum_{t\in W} \phi(t^{-1}s^{-1}\Gamma_n)_e f(t^{-1})\right|^2 \\
&=& [\Gamma:\Gamma_n]^{-1} \sum_{s\Gamma_n \in \Gamma/\Gamma_n}  \left| \sum_{t\in W} \left(\phi(t^{-1}s^{-1}\Gamma_n)_e - [t^{-1}\phi(s^{-1}\Gamma_n)]_e + [t^{-1}\phi(s^{-1}\Gamma_n)]_e\right)  f(t^{-1})\right|^2 \\
&=& [\Gamma:\Gamma_n]^{-1} \sum_{s\Gamma_n \in \Gamma/\Gamma_n}  \left| \sum_{t\in W} \left(\phi(t^{-1}s^{-1}\Gamma_n)_e - [t^{-1}\phi(s^{-1}\Gamma_n)]_e \right)  f(t^{-1})\right|^2 \\
&\le& [\Gamma:\Gamma_n]^{-1} \sum_{s\Gamma_n \in \Gamma/\Gamma_n}   \left( \sum_{t\in W} \left|\phi(t^{-1}s^{-1}\Gamma_n)_e - [t^{-1}\phi(s^{-1}\Gamma_n)]_e \right|^2 \right) \|f \|_2^2 \\
&=& \|f\|_2^2 \sum_{t \in W} \rho_2(\phi \circ t^{-1}, t^{-1} \circ \phi)^2\\
&\le& \|f\|_2^2 |W| \delta^2.
\end{eqnarray*}

For each $\phi\in \Map(W,\delta,\Gamma_n)$ take $\tilde{y}_\phi \in [-1/2, 1/2)^{\Gamma/\Gamma_n}$ such that $\tilde{y}_\phi + \Z^{\Gamma/\Gamma_n} =y_\phi$. Then there exists $z_\phi\in \Z^{\Gamma/\Gamma_n}$ such that $\tilde{y}_\phi\pi_n(f) - z_\phi \in [-1/2,1/2)^{\Gamma/\Gamma_n}$ which implies $\|\tilde{y}_\phi \pi_n(f)-z_\phi\|_{2,U} = \|y_\phi \pi_n(f)\|_{2,U}\le \|f\|_2 |W|^{1/2} \delta$.

Let $S_n:\R^{\Gamma/\Gamma_n} \to \R$ be the sum function: $S_n(y) = \sum_{s\Gamma_n \in \Gamma/\Gamma_n} y_{s\Gamma_n}$.  Note that
$$|S_n(\tilde{y}_\phi \pi_n(f)-z_\phi)| \le \|y_\phi \pi_n(f)\|_1 \le (1/2) [\Gamma:\Gamma_n].$$
Note that $S_n(y\pi_n(f))=0$ for every $y \in \R^{\Gamma/\Gamma_n}$. In particular, $S_n(\tilde{y}_\phi\pi_n(f)-z_\phi)=-S_n(z_\phi) \in \Z$.
So there exists $z'_\phi \in \{-1,0,1\}^{\Gamma/\Gamma_n}$ such that $S_n(\tilde{y}_\phi\pi_n(f)-z_\phi-z'_\phi)=0$ and $\|z'_\phi\|_1 \le \|y_\phi \pi_n(f)\|_1$. So by Lemma~\ref{L-diffrent norms}
\begin{eqnarray*}
\| \tilde{y}_\phi\pi_n(f)-z_\phi-z'_\phi\|_{2,U} &\le& \|\tilde{y}_\phi \pi_n(f) - z_\phi\|_{2,U} + \|z'_\phi\|_{2,U} \\
&\le& \|\tilde{y}_\phi \pi_n(f) - z_\phi\|_{2,U} + \|z'_\phi\|_{1,U}^{1/2}\\
 &\le& \|y_\phi \pi_n(f)\|_{2,U} +\|y_\phi \pi_n(f)\|_{2,U}^{1/2} \\
 &\le& (1+2^{-1/2})\|y_\phi \pi_n(f)\|_{2,U}^{1/2} \\
 &\le& 2\|f\|_2^{1/2}|W|^{1/4}\delta^{1/2}<\eta.
 \end{eqnarray*}

 Note that $\ker \pi_n(f)$ is the constants in $\ell^2(\Gamma/\Gamma_n, \Cb)$. Denote by $\ell^2_0(\Gamma/\Gamma_n, \Cb)$ the orthogonal complement of
 the constants in $ \ell^2(\Gamma/\Gamma_n, \Cb)$, and set $\ell^2_0(\Gamma/\Gamma_n, \Rb)=\ell^2(\Gamma/\Gamma_n, \Rb)\cap \ell^2_0(\Gamma/\Gamma_n, \Cb)$.
 Note that $y\in \R^{\Gamma/\Gamma_n}$ is in $\ell^2_0(\Gamma/\Gamma_n, \Rb)$ exactly when $S_n(y)=0$.
The operator $\pi_n(f)$ is invertible as an operator from $\ell_0^2(\Gamma/\Gamma_n, \Cb)$ to itself.
Since $\pi_n(f)$  preserves $\ell^2(\Gamma/\Gamma_n, \Rb)$, it is also invertible from $\ell_0^2(\Gamma/\Gamma_n, \Rb)$ to itself.
Therefore there exists $\tilde{x}_\phi\in \ell_0^2(\Gamma/\Gamma_n, \Rb)$ such that $\tilde{x}_\phi\pi_n(f)=z_\phi+z'_\phi$.
Let $P_n$ and $B_{n, \eta}$ be as in Lemma~\ref{L-bound preimage} for $g=f$.
Then
$$ \|(\tilde{y}_\phi-\tilde{x}_\phi)\pi_n(f)\|_{2, U}=\| \tilde{y}_\phi\pi_n(f)-z_\phi-z'_\phi\|_{2,U}<\eta.$$
Note that $\|P_n(\tilde{y}_\phi-\tilde{x}_\phi)\|_{2, U}=\|P_n(\tilde{y}_\phi)\|_{2, U}\le \|\tilde{y}_\phi\|_{2, U}\le 1/2$.
Therefore $\tilde{y}_\phi-\tilde{x}_\phi\in B_{n, \eta}$.

Let $\Phi_n$ be a  $(\rho_2, 2\varepsilon)$-separated subset of $\Map(W,\delta,\Gamma_n)$ with $|\Phi_n|=N_{2\varepsilon}(\Map(W,\delta,\Gamma_n), \rho_2)$.

Let $\phi \in \Phi_n$. Denote by $B_\phi$ the set of all $\psi\in \Phi_n$ satisfying
$\|(\tilde{y}_\phi-\tilde{x}_\phi)-(\tilde{y}_\psi-\tilde{x}_\psi)\|_{2, U}<\varepsilon$.
Denote $\Zb(\Gamma/\Gamma_n)\cap \ell_0^2(\Gamma/\Gamma_n, \Rb)$ by $\Zb_0(\Gamma/\Gamma_n)$.
We claim that the map $B_\phi\rightarrow \Zb_0(\Gamma/\Gamma_n)/\Zb_0(\Gamma/\Gamma_n)\pi_n(f)$ sending
$\psi$ to $z_\psi+z'_\psi+\Zb_0(\Gamma/\Gamma_n)\pi_n(f)$ is injective.
Let $\psi, \varphi\in B_\phi$. Then
\begin{align*}
\|(\tilde{y}_\psi-\tilde{x}_\psi)-(\tilde{y}_\varphi-\tilde{x}_\varphi)\|_{2, U}<2\varepsilon.
\end{align*}
Suppose that $z_\psi+z'_\psi+\Zb_0(\Gamma/\Gamma_n)\pi_n(f)=z_\varphi+z'_\varphi+\Zb_0(\Gamma/\Gamma_n)\pi_n(f)$.
Then $\tilde{x}_\psi\pi_n(f)=\tilde{x}_\varphi\pi_n(f)+w\pi_n(f)$ for some $w\in \Zb_0(\Gamma/\Gamma_n)$. Since the right multiplication by $\pi_n(f)$ is injective on $\ell_0^2(\Gamma/\Gamma_n, \Rb)$, we get
$\tilde{x}_\psi=\tilde{x}_\varphi+w$, which implies that
$$\rho_2(\psi, \varphi)=\|y_\psi-y_\varphi\|_{2, U}\le \|(\tilde{y}_\psi-\tilde{x}_\psi)-(\tilde{y}_\varphi-\tilde{x}_\varphi)\|_{2, U}<2\varepsilon,$$
and thus $\psi=\varphi$. This proves our claim.

Therefore $|B_\phi|\le |\Zb_0(\Gamma/\Gamma_n)/\Zb_0(\Gamma/\Gamma_n)\pi_n(f)|$. Note that $\ell_0^2(\Gamma/\Gamma_n, \Rb)$ is the linear span of $\Zb_0(\Gamma/\Gamma_n)$. So any basis of $\Zb_0(\Gamma/\Gamma_n)$ as a free abelian group is also a basis for $\ell_0^2(\Gamma/\Gamma_n, \Rb)$ as an $\R$-vector space.
 Thus by Lemma~\ref{lem:correspondence}
one has
$|\Zb_0(\Gamma/\Gamma_n)/\Zb_0(\Gamma/\Gamma_n)\pi_n(f)|=|\det (\pi_n(f)|_{\ell_0^2(\Gamma/\Gamma_n, \Rb)})|$.
Therefore
$$|B_\phi|\le |\det (\pi_n(f)|_{\ell_0^2(\Gamma/\Gamma_n, \Rb)})|.$$

Now we have
\begin{align*}
|\Phi_n|\le N_\varepsilon(B_{n, \eta}, \|\cdot \|_{2, U}) \max_{\phi\in  \Phi_n}|B_\phi|\le N_\varepsilon(B_{n, \eta}, \|\cdot \|_{2, U}) |\det (\pi_n(f)|_{\ell_0^2(\Gamma/\Gamma_n, \Rb)})|.
\end{align*}
Let $\mu$ and $\mu_n$ be as in Lemma~\ref{L-measure weak convergence} for $g=f$.
When $n\in \Nb$ is large enough, by Lemma~\ref{L-bound preimage} we have
$$ N_\varepsilon(B_{n, \eta}, \|\cdot\|_{2, U})<\zeta^{[\Gamma: \Gamma_n]}\exp\left(-[\Gamma: \Gamma_n]\int_{0+}^{\kappa}\log t \, d\mu_n(t)\right),$$
whence
\begin{align*}
N_{2\varepsilon}(\Map(W,\delta,\Gamma_n), \rho_2)
&=|\Phi_n| \\
&\le \zeta^{[\Gamma: \Gamma_n]}\exp\left(-[\Gamma: \Gamma_n]\int_{0+}^{\kappa}\log t \, d\mu_n(t)\right)\left|\det (\pi_n(f)|_{\ell_0^2(\Gamma/\Gamma_n, \Rb)})\right| \\
&=\zeta^{[\Gamma: \Gamma_n]}\exp\left([\Gamma: \Gamma_n]\int_{\kappa+}^{\|f\|_1}\log t \, d\mu_n(t)\right).
\end{align*}
It follows that
\begin{align*}
\limsup_{n\to \infty}\frac{1}{[\Gamma: \Gamma_n]}\log N_{2\varepsilon}(\Map(W,\delta,\Gamma_n), \rho)&\le \log \zeta+\limsup_{n\to \infty}\int_{\kappa+}^{\|f\|_1}\log t \, d\mu_n(t) \\
&\le \log \zeta+\int_{\kappa+}^{\|f\|_1}\log t \, d\mu(t),
\end{align*}
where the second inequality comes from Lemma~\ref{L-measure weak convergence}. Therefore
$$ h_{\Sigma}(X_f,\Gamma) \le \log \zeta+\int_{\kappa+}^{\|f\|_1}\log t \, d\mu(t).$$
Letting $\zeta\to 1+$ and $\kappa\to 0+$, we get
$$ h_{\Sigma}(X_f,\Gamma) \le \int_{0+}^{\|f\|_1}\log t \, d\mu(t)\overset{\eqref{E-determinant}}=\log \ddet_{\cN\Gamma} f.$$
\end{proof}

We are ready to prove Theorem~\ref{thm:main1}.

\begin{proof}[Proof of Theorem~\ref{thm:main1}]
From Lemmas \ref{L-lower bound} and \ref{L-upper bound} and Theorem~\ref{T-det vs number of fixed point} we obtain
$$h_{\Sigma}(X_f,\Gamma) \le \log \ddet_{\cN\Gamma} f=\lim_{n\to \infty} [\Gamma:\Gamma_n]^{-1} \log |\Fix_{\Gamma_n}(X_f)| \le h_{\Sigma,\lambda}(X_f,\Gamma).$$
It follows immediately from Theorem \ref{thm:entropy} that $h_{\Sigma}(X_f,\Gamma) \ge h_{\Sigma,\lambda}(X_f,\Gamma).$
\end{proof}


\section{Entropy of the Wired Spanning Forest}
The purpose of this section is to prove Theorem \ref{thm:WSF}. To begin, let us set notation. Recall that $\Sigma=\{\Gamma_n\}^\infty_{n=1}$ a sequence of finite-index normal subgroups of $\Gamma$ satisfying $\bigcap_{n=1}^\infty \bigcup_{i\ge n} \Gamma_i = \{e\}$. All graphs in this paper are allowed to have multiple edges and loops. Let $f \in \Z\Gamma$ be well-balanced. The Cayley graph $C(\Gamma,f)$ has vertex set $\Gamma$. For each $v \in \Gamma$ and $s \ne e$, there are $|f_s|$ edges from $v$ to $vs$. Similarly, we let $C_n^f=C(\Gamma/\Gamma_n,f)$ be the graph with vertex set $\Gamma/\Gamma_n$ such that for each $g\Gamma_n \in \Gamma/\Gamma_n$ and $s \in \Gamma$ there are $|f_s|$ edges from $g\Gamma_n$ to $gs\Gamma_n$. For the sake of convenience we let $E=E(\Gamma,f)$ denote the edge set of $C(\Gamma,f)$ and $E_n=E_n^f$ denote the edge set of $C_n^f$. Recall the definition of $S$ and $S_*$ from Notation \ref{note:S}.


Let $\pi_n:\Gamma \to \Gamma/\Gamma_n$ denote the quotient map. We also denote by $\pi_n$ the induced map from $\R\Gamma$ to $\R\Gamma/\Gamma_n$ as well as the map from $E(\Gamma,f)$ (the edge set of $C(\Gamma,f)$) to $E_n^f$ (the edge set of $C_n^f$).




\subsection{The lower bound}


If $\cH \subset C_n^f$ is a subgraph then its {\em lift} $\tilde{\cH} \subset C(\Gamma,f)$ is the subgraph which contains an edge $gs$ (for $g\in \Gamma, s\in S_*$) if and only if $\cH$ contains $\pi_n(gs)$. Let $2^E$ be the set of all spanning subgraphs of $C(\Gamma,f)$ and let $2^{E_n}$ be the set of all spanning subgraphs of $C_n^f$. Let $\nu_n$ be the probability measure on $2^{E_n}$ which is uniformly distributed on the collection of spanning trees of $C_n^f$. Let $\tilde{\nu}_n$ be its lift to $2^E$. To be precise, $\tilde{\nu}$ is uniformly distributed on the set of lifts $\tilde{\cT}$ of spanning trees $\cT \in 2^{E_n}$.

\begin{lemma}\label{lem:WSF2}
$\tilde{\nu}_n$ converges in the weak* topology to $\nu_{WSF}$ as $n$ tends to infinity.
\end{lemma}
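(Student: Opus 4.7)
The plan is to reduce weak-$*$ convergence on $2^E$ (a compact metrizable space in the product topology) to convergence of cylinder-set probabilities, and then to exploit the determinantal structure shared by $\tilde\nu_n$ and $\nu_{WSF}$. By inclusion--exclusion, it suffices to show that for every finite $F = \{e_1, \dots, e_k\} \subset E$ one has $\tilde\nu_n(\{F \subset \cF\}) \to \nu_{WSF}(\{F \subset \cF\})$. Using $\bigcap_n \bigcup_{i \ge n} \Gamma_i = \{e\}$, for every radius $R$ the restriction of $\pi_n$ to the ball $B_R$ of $C(\Gamma,f)$ around $e$ is eventually injective, so for all large $n$ the edges of $F$ lift bijectively to distinct edges $\pi_n(F)$ of $C_n^f$ sitting inside a rooted-graph copy of a fixed neighbourhood of $e$.

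Next I would invoke the determinantal description of both measures. By the Kirchhoff--Burton--Pemantle theorem, the UST of the finite multi-graph $C_n^f$ is a determinantal point process on its edge set with kernel given by the transfer current matrix $Y_n$ of $C_n^f$; hence $\tilde\nu_n(\{F \subset \cF\}) = \det\bigl(Y_n(\pi_n(e_i),\pi_n(e_j))\bigr)_{i,j=1}^k$. Benjamini, Lyons, Peres, and Schramm proved that $\nu_{WSF}$ is likewise determinantal, with kernel equal to the transfer current $Y$ on the infinite Cayley graph $C(\Gamma,f)$ (defined through the free/wired simple random walk Green function), so $\nu_{WSF}(\{F \subset \cF\}) = \det\bigl(Y(e_i,e_j)\bigr)_{i,j=1}^k$. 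The problem therefore reduces to the entrywise convergence $Y_n(\pi_n(e),\pi_n(e')) \to Y(e,e')$ for every pair of edges $e, e' \in E$.

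Each transfer current entry is built from the Green function of the simple random walk. The $k$-step transition kernel $p_n^k$ on $C_n^f$ is the $\pi_n$-pushforward of the $k$-step kernel $p^k$ on $C(\Gamma,f)$, so eventual injectivity of $\pi_n$ on finite balls gives $p_n^k(\pi_n(x),\pi_n(y)) \to p^k(x,y)$ for every fixed $k$ and pair of vertices. The main obstacle will be upgrading this pointwise-in-$k$ convergence to convergence of the full Green function, an infinite sum in $k$. When $\Gamma$ is not virtually $\Z$ or $\Z^2$ one already has $\sum_k \|\mu^k\|_\infty < \infty$ by the Varopoulos estimate used in \S\ref{S-homoclinic}, which provides an integrable dominating function and lets dominated convergence conclude. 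In the remaining virtually $\Z$ or $\Z^2$ case the walk is recurrent and one must replace the Green function by its potential-kernel-centred version (equivalently, work with the wired-exhaustion transfer current); convergence $Y_n \to Y$ under the local (Benjamini--Schramm) convergence $C_n^f \to C(\Gamma,f)$ then follows from a standard determinantal argument for USTs on graph covers.
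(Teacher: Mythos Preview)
Your determinantal route is legitimate in spirit---it is essentially the Aldous--Lyons argument the paper cites as \cite[Proposition~7.1]{AL07}---but the execution contains a genuine gap. The simple random walk on the \emph{finite} quotient $C_n^f$ is always recurrent, regardless of whether $\Gamma$ itself is transient, so the ``full Green function'' $\sum_{k\ge 0} p_n^k(\pi_n(x),\pi_n(y))$ diverges for every $n$. Moreover the pointwise domination you invoke fails: since $p_n^k(\pi_n(x),\pi_n(y)) = \sum_{\gamma\in\Gamma_n} (\mu^k)_{x^{-1}\gamma y}$ is a sum over the infinite subgroup $\Gamma_n$, it is not bounded by $\|\mu^k\|_\infty$ once $k$ exceeds the injectivity radius. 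The finite-graph transfer current $Y_n$ is in fact built from the Moore--Penrose pseudo-inverse of the Laplacian (equivalently from potential-kernel differences), and establishing $Y_n\to Y$ requires controlling those differences---which is exactly the work you defer to a ``standard argument'' in the recurrent case. In short, the recurrent-case difficulty is present for every $\Gamma$, and the transient-case shortcut via dominated convergence of Green functions is illusory.

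The paper's proof avoids kernel computations altogether and takes a quite different path. It uses the Feder--Mihail negative-correlation inequality to show that, on increasing local events, the UST on $C_n^f$ stochastically dominates the UST on a wired ball $B(i_n)^w\cong D_n^w$, which by the very definition of the WSF converges to $\nu_{WSF}$. Hence every weak-$*$ subsequential limit $\tilde\nu_\infty$ stochastically dominates $\nu_{WSF}$; a Strassen coupling, combined with the observation that both $\nu_{WSF}$ and $\tilde\nu_\infty$ assign each vertex expected degree exactly $2$, forces the coupling to be supported on the diagonal, so $\tilde\nu_\infty=\nu_{WSF}$. This argument is softer, needs no spectral input, and handles transient and recurrent $\Gamma$ uniformly.
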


\begin{remark}
This lemma is a special case of \cite[Proposition 7.1]{AL07}. It is also contained in \cite[Theorem 4.3]{Bo04}. However, there is an error in the proof of  \cite[Theorem 4.3]{Bo04} (namely, the fact that subspaces $S_i$ increase to $l^2_-(\Gamma)$ does not logically imply that $P_{S_i}(\star)$ converges to $\star$ in the strong operator topology). For the reader's convenience we provide another proof based on a negative correlations result of Feder and Mihail.
\end{remark}

Let $\cG=(V^\cG,E^\cG)$ be a finite connected graph. A collection $\cA$ of spanning subgraphs is {\em increasing} if $x \subset y$ and $x\in \cA$ implies $y\in \cA$. We say that $\cA$ {\em ignores} an edge $\fe$ if $x \setminus \{\fe\} = y \setminus \{\fe\}$ and $x \in \cA$ implies $y \in \cA$.

\begin{lemma}
If $\cA$ is increasing, $\cA$ ignores $\fe$, $\fe$ is not a loop and $T$ denotes the uniform spanning tree on $\cG$ then
$${\bf P}(T \in \cA) \ge {\bf P}(T \in \cA| \fe \in T) = \frac{{\bf P}(T \in \cA, \fe \in T) }{{\bf P}(\fe \in T) }$$
 where ${\bf P}(\cdot)$ denotes probability. Equivalently, ${\bf P}(T \in \cA) \le {\bf P}(T \in \cA| \fe \notin T)$ whenever this is well-defined (i.e., whenever ${\bf P}(\fe \notin T)>0$, or equivalently, whenever $\cG\setminus \{\fe\}$ is connected).
\end{lemma}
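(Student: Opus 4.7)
My plan is the following. Writing $\cG_c := \cG/\fe$ for the contraction and $\cG_d := \cG \setminus \fe$ for the deletion, I first note that conditioning $T$ on $\{\fe \in T\}$ yields $T_{\cG_c} \cup \{\fe\}$, where $T_{\cG_c}$ is the UST of $\cG_c$, naturally viewed as a subset of $E^\cG \setminus \{\fe\}$; similarly, conditioning on $\{\fe \notin T\}$ (which requires $\cG_d$ to be connected, otherwise both sides are trivially equal to $1$) yields $T_{\cG_d}$. Since $\cA$ ignores $\fe$, we may regard it as a collection of subsets of $E^\cG \setminus \{\fe\}$. By the law of total probability,
$$\mathbf{P}(T \in \cA) = \mathbf{P}(\fe \in T)\,\mathbf{P}(T_{\cG_c} \in \cA) + \mathbf{P}(\fe \notin T)\,\mathbf{P}(T_{\cG_d} \in \cA),$$
and $\mathbf{P}(T \in \cA \mid \fe \in T) = \mathbf{P}(T_{\cG_c} \in \cA)$. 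The inequality therefore reduces to the stochastic domination
$$\mathbf{P}(T_{\cG_d} \in \cA) \ge \mathbf{P}(T_{\cG_c} \in \cA) \quad \text{for every increasing } \cA \subseteq 2^{E^\cG \setminus \{\fe\}}.$$

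Next I would attack the reduced statement for cylinder events $\cA_F = \{H : F \subseteq H\}$ with finite $F \subseteq E^\cG \setminus \{\fe\}$ via the Burton--Pemantle transfer current theorem: $\mathbf{P}(F \subseteq T_\cH) = \det Y^\cH_F$, where $Y^\cH_F$ is the principal minor of the transfer current matrix of $\cH$ indexed by $F$, a symmetric positive semidefinite operator. Since $\cG_c$ is obtained from $\cG_d$ by short-circuiting the endpoints of $\fe$, Rayleigh's monotonicity principle for effective resistances gives $Y^{\cG_c}_F \preceq Y^{\cG_d}_F$ in the positive semidefinite order, hence $\det Y^{\cG_c}_F \le \det Y^{\cG_d}_F$. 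This establishes the domination for all cylinders $\cA_F$.

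Finally I would promote this from cylinders to arbitrary increasing events. The cleanest route is to construct an explicit monotone coupling $(T_c, T_d)$ with $T_c \subseteq T_d$ almost surely via Wilson's algorithm, using a common collection of independent walks: on $\cG_c$ the walks see the two endpoints of $\fe$ as identified, while on $\cG_d$ they do not, and one verifies that the loop-erasures can be chosen so that every edge in $T_c$ appears in $T_d$. Given such a coupling, $\{T_c \in \cA\} \subseteq \{T_d \in \cA\}$ for any increasing $\cA$, which yields the domination and hence the lemma.

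The main obstacle is the passage from cylinder events to general increasing events: the determinantal inequality alone gives only negative correlation for edge-cylinders. The monotone coupling is delicate because short-circuiting changes the hitting distributions of the random walks; an alternative is an induction on $|E^\cG|$, splitting on a chosen edge $\fe'' \ne \fe$ and applying the inductive hypothesis to the strictly smaller graphs $\cG/\fe''$ and $\cG \setminus \fe''$, with the base case being trivial when only $\fe$ joins its endpoints.
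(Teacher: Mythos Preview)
The paper does not prove the first inequality at all: it cites Feder--Mihail \cite{FM92} (via \cite[Theorem~4.4]{BLPS01}) for the statement ${\bf P}(T\in\cA)\ge{\bf P}(T\in\cA\mid\fe\in T)$, and then gives a three-line algebraic computation showing that this is equivalent to ${\bf P}(T\in\cA)\le{\bf P}(T\in\cA\mid\fe\notin T)$. Your proposal is therefore far more ambitious than what the paper actually does: you are trying to re-prove the Feder--Mihail negative-association theorem itself.

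Your reduction in the first paragraph is correct and clean: the inequality is equivalent to the stochastic domination of the UST on the contraction $\cG_c$ by the UST on the deletion $\cG_d$, viewed as measures on $2^{E^\cG\setminus\{\fe\}}$. This is exactly the statement Feder--Mihail gives for the uniform base measure of a graphic matroid.

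The gap is in your third step. First, note that if you had a monotone coupling $(T_c,T_d)$ with $T_c\subseteq T_d$ a.s., your transfer-current argument for cylinders would be redundant; conversely, the cylinder inequality $\mathbf{P}(F\subseteq T_c)\le\mathbf{P}(F\subseteq T_d)$ for all $F$ does \emph{not} by itself imply stochastic domination for general increasing events (it is strictly weaker). So the entire weight of the argument rests on producing the coupling or completing the induction. Your Wilson-algorithm sketch (``one verifies that the loop-erasures can be chosen so that every edge in $T_c$ appears in $T_d$'') is not a proof: identifying the endpoints of $\fe$ changes the hitting times and loop structure of the walks in a way that does not obviously preserve edge inclusion, and no direct Wilson- or Aldous--Broder-based monotone coupling of this kind is known to work without further argument. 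The induction you allude to is essentially the Feder--Mihail proof, but making it go through requires care (one must choose an edge on which a certain conditional inequality holds, using a counting/averaging argument over all edges); the one-line description you give does not capture this. As written, the proposal reduces the lemma to an assertion of comparable depth to the lemma itself.
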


\begin{proof}
This result is due to Feder and Mihail \cite{FM92}. The first statement is reproduced in \cite[Theorem 4.4]{BLPS01}. To see that the second inequality is equivalent to the first observe that
$$ {\bf P}(T \in \cA| \fe \notin T) = \frac{ {\bf P}(T \in \cA, \fe\notin T) }{{\bf P}(\fe \notin T)} = \frac{ {\bf P}(T \in \cA) - {\bf P}(T \in \cA, \fe\in T) }{1-{\bf P}(\fe \in T)}.$$
By multiplying denominators, we see that ${\bf P}(T \in \cA) \le {\bf P}(T \in \cA| \fe \notin T)$ if and only if
$$ {\bf P}(T \in \cA) - {\bf P}(T \in \cA, \fe\in T) \ge {\bf P}(T \in \cA) - {\bf P}(T \in \cA){\bf P}(\fe \in T)$$
which simplifies to ${\bf P}(T \in \cA) \ge {\bf P}(T \in \cA| \fe \in T)$.
\end{proof}

\begin{proof}[Proof of Lemma \ref{lem:WSF2}]

For $n\ge 0$, let $B(n)$ denote the ball of radius $n$ centered at the identity element in $C(\Gamma,f)$. For each $n$, choose a non-negative integer $i_n$ so that the following hold.
\begin{enumerate}
\item $\lim_{n\to\infty} i_n \to \infty$.
\item The quotient map $\pi_n$ restricted to $B(i_n)$ is injective but not surjective. Moreover, if $v,w$ are vertices in $B(i_n)$ then the number of edges in $C^f_n$  from $v\Gamma_n$ to $w\Gamma_n$ equals the number of edges in $B(i_n)$ from $v$ to $w$.
\end{enumerate}
Because $\bigcap_{n=1}^\infty \bigcup_{i \ge n} \Gamma_i = \{e\}$, it is possible to find such a sequence.

Let  $C^w_n$ denote the graph $C_n^f$ with all the vertices outside of $B(i_n)\Gamma_n$ contracted together. To be precise, $C^w_n$ has vertex set $B(i_n)\Gamma_n \cup \{*\}$. Every edge in $C_n^f$ with endpoints in $B(i_n)\Gamma_n$ is also in $C^w_n$. For every edge in $C_n^f$ with one endpoint $v$ in $B(i_n)\Gamma_n$ and the other endpoint not in $B(i_n)\Gamma_n$, there is an edge in $C^w_n$ from $v$ to $*$.

Similarly, let $D^w_n$ denote the graph $C(\Gamma,S)$ with all the vertices outside of $B(i_n)$ contracted together. By the choice of $i_n$, $D^w_n$ is isomorphic to $C^w_n$. Let $\nu^{C,w}_n$ be the law of the uniform spanning tree on $C^w_n$, $\nu^{D,w}_n$ be the law of the uniform spanning tree on $D^w_n$ and $\nu_n$ be the law of the uniform spanning tree on $C_n^f$.

Let $\cA \subset 2^E$ be an increasing set which depends on only a finite number of edges (i.e., there is a finite subset $F \subset E$ such that if $x, y \in 2^E$ and $x \cap F = y \cap F$ then $x \in \cA \Leftrightarrow y \in \cA$). If $n$ is sufficiently large, then $F \subset B(i_n)$. So we define $\cA_n \subset 2^{E_n}$ by: $x\in \cA_n \Leftrightarrow \exists y \in \cA$ such that $\pi_n(y \cap F) = x \cap \pi_n(F)$. By abuse of notation, we also consider $\cA_n$ to be a subset of the set of edges of $C^w_n$.

Because $C^w_n$ is obtained from $C_n^f$ by adding some edges and contracting some edges, repeated applications of the previous lemma imply $\nu^{C,w}_n(\cA_n) \le \nu_n(\cA_n)$. By definition, $\nu^{C,w}_n(\cA_n) = \nu^{D,w}_n(\cA)$ and $\nu_n(\cA_n) = \tilde{\nu}_n(\cA)$. Thus,
$$ \nu^{D,w}_n(\cA) \le \tilde{\nu}_n(\cA).$$

Let $E(i_n)$ denote the set of edges in the ball $B(i_n)$. We consider $2^{E(i_n)}$, the set of all subsets of $E(i_n)$, to be included in $2^E$, the set of all subsets of $E$, in the obvious way. By definition of the Wired Spanning Forest, the projection of $\nu^{D,w}_n$ to $2^{E(i_n)} \subset 2^E$ converges to $\nu_{WSF}$ in the weak* topology. So if $\tilde{\nu}_\infty$ is a weak* limit point of $\{\tilde{\nu}_n\}_{n=1}^\infty$ then we have
$$\nu_{WSF}( \cA) \le \tilde{\nu}_\infty( \cA)$$
for every increasing $\cA \subset 2^E$ which depends on only a finite number of edges. This means that, for any finite subset $F \subset E$, the projection of $\nu_{WSF}$ to $2^F$, denoted $\nu_{WSF}|2^F$, is stochastically dominated by $\tilde{\nu}_\infty|2^F$. By Strassen's theorem \cite{St65}, there exists a probability measure $J_F$ on
$$\{ (x,y) \in 2^F \times 2^F :~ x \subset y\}$$
with marginals $\nu_{WSF}|{2^F}$ and $\tilde{\nu}_\infty|{2^F}$ respectively. By taking a weak* limit point of $\{J_F\}_{F \subset E}$ as $F$ increases to $E$, we obtain the existence of a Borel probability measure $J$ on
$$\{ (x,y) \in 2^E \times 2^E :~ x \subset y\}$$
with marginals $\nu_{WSF}$ and $\tilde{\nu}_\infty$ respectively.

Observe that the average degree of a vertex in the WSF is 2. To put it more formally, for every $g\in \Gamma$, let $\deg_g:2^E \to \Z$ be the map $\deg_g(x)$ equals the number of edges in $x$ adjacent to $g$. By \cite[Theorem 6.4]{BLPS01}, $\int \deg_g(x)~d\nu_{WSF}(x)=2$. Also $\int \deg_g(x)~d\tilde{\nu}_\infty(x)=2$. This can be seen as follows. Because $\tilde{\nu}_n$ is $\Gamma$-invariant, it follows that $\int \deg_g(x)~d\tilde{\nu}_{n}(x)$ is just the average degree of a vertex in a uniformly random spanning tree of $C_n^f$. However, each such tree has $[\Gamma:\Gamma_n]$ vertices and $[\Gamma:\Gamma_n]-1$ edges and therefore, the average degree is $2([\Gamma:\Gamma_n]-1)[\Gamma:\Gamma_n]^{-1}$ which converges to $2$ as $n\to\infty$.

Because $\int \deg_g(x)~d\nu_{WSF}(x)=\int \deg_g(x)~d\tilde{\nu}_\infty(x)$ for every $g\in \Gamma$, it follows that $J$ is supported on $\{(x,x):~x \in 2^E\}$. Thus $\tilde{\nu}_\infty = \nu_{WSF}$ as claimed.
\end{proof}

For $x \in 2^E$, let $x_1$ denote the restriction of $x$ to the set of all edges containing the identity element. Let $\rho$ be the continuous pseudo-metric on $2^E$ defined by $\rho(x,y) = 1$ if $x_1\ne y_1$ and $\rho(x,y)=0$ otherwise. This pseudo-metric is dynamically generating.

For $x\in 2^{E_n}$, let $\phi_x: \Gamma/\Gamma_n \to 2^E$ be the map $\phi_x(g\Gamma_n) = \widetilde{gx}$. Let $W \subset \Gamma, L \subset C(2^E)$ be non-empty finite sets and $\delta>0$. Let $\BAD(W,L,\delta,\Gamma_n)$ be the set of all $x\in 2^{E_n}$ such that $\phi_x \notin \Map_{\nu_{WSF}}(W,L,\delta,\Gamma_n)$.

\begin{lemma}\label{lem:BAD2}
$$\lim_{n\to\infty} \nu_n(\BAD(W,L,\delta,\Gamma_n)) = 0.$$
\end{lemma}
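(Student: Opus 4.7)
The plan is to mimic the proof of Lemma~\ref{lem:BAD1} almost verbatim, replacing the harmonic-model measures $\lambda_n$ and $\lambda$ by the lifted spanning tree measures $\tilde{\nu}_n$ and the WSF measure $\nu_{WSF}$. First I would work on the space $2^E$ rather than $2^{E_n}$: the lift $x\mapsto\tilde{x}$ intertwines the $\Gamma/\Gamma_n$-action on $2^{E_n}$ with the $\Gamma$-action on $2^E$ (one checks $g\cdot\tilde{x}=\widetilde{\pi_n(g)\cdot x}$), so $\tilde{\nu}_n$ is a $\Gamma$-invariant Borel probability measure on $2^E$. Lemma~\ref{lem:WSF2} gives $\tilde{\nu}_n\to\nu_{WSF}$ in the weak$^*$ topology as $n\to\infty$.

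Next I would decompose the bad set. For each $\sigma\in\{-1,0,1\}^L$, let $\BAD_\sigma(W,L,\delta,\Gamma_n)$ be the set of $x\in 2^{E_n}$ whose associated $\phi_x$ satisfies, for each $p\in L$,
\[
\sigma(p)\bigl[(\phi_x)_*U_n(p)-\nu_{WSF}(p)\bigr]>\delta \quad \text{if } \sigma(p)\ne 0,
\]
and $|(\phi_x)_*U_n(p)-\nu_{WSF}(p)|\le\delta$ if $\sigma(p)=0$. Since $\phi_x(g\Gamma_n)=\widetilde{gx}$ depends $\Gamma/\Gamma_n$-equivariantly on $x$, each $\BAD_\sigma$ is $\Gamma/\Gamma_n$-invariant, and the sets $\BAD_\sigma$ partition $\BAD(W,L,\delta,\Gamma_n)$. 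Write $t_{n,\sigma}=\nu_n(\BAD_\sigma)$ and $t_{n,G}=1-\nu_n(\BAD)$, and define the corresponding conditional probability measures $\nu_{n,\sigma}$ and $\nu_{n,G}$ on $2^{E_n}$. Lift all of them to $\Gamma$-invariant measures on $2^E$. Then
\[
\tilde{\nu}_n=t_{n,G}\tilde{\nu}_{n,G}+\sum_\sigma t_{n,\sigma}\tilde{\nu}_{n,\sigma}.
\]

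By Banach--Alaoglu, pass to a subsequence along which all of $\tilde{\nu}_{n,G},\tilde{\nu}_{n,\sigma}$ converge weak$^*$ to $\Gamma$-invariant probability measures $\tilde{\nu}_{\infty,G},\tilde{\nu}_{\infty,\sigma}$, and all $t_{n,G},t_{n,\sigma}$ converge to limits $t_{\infty,G},t_{\infty,\sigma}$. By Lemma~\ref{lem:WSF2},
\[
\nu_{WSF}=t_{\infty,G}\tilde{\nu}_{\infty,G}+\sum_\sigma t_{\infty,\sigma}\tilde{\nu}_{\infty,\sigma}.
\]
Here is where I invoke the key external input: the WSF on the Cayley graph $C(\Gamma,f)$ is $\Gamma$-ergodic (this is a standard consequence of the tail-triviality of the WSF on transitive unimodular graphs, e.g.\ \cite[\S 10]{BLPS01}). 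Ergodicity, together with $\Gamma$-invariance of each $\tilde{\nu}_{\infty,\sigma}$, forces $\tilde{\nu}_{\infty,\sigma}=\nu_{WSF}$ whenever $t_{\infty,\sigma}>0$. But for any $p\in L$ with $\sigma(p)\ne 0$, weak$^*$ convergence gives
\[
\sigma(p)\bigl(\tilde{\nu}_{\infty,\sigma}(p)-\nu_{WSF}(p)\bigr)\ge\delta,
\]
contradicting $\tilde{\nu}_{\infty,\sigma}=\nu_{WSF}$. (The constantly zero $\sigma$ yields empty $\BAD_\sigma$.) Hence $t_{\infty,\sigma}=0$ for every $\sigma$, and summing over the finitely many $\sigma$ gives $\nu_n(\BAD)=\sum_\sigma t_{n,\sigma}\to 0$.

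The only genuinely new ingredient beyond the argument of Lemma~\ref{lem:BAD1} is the ergodicity of $\nu_{WSF}$ under $\Gamma$; everything else is a cosmetic substitution. Thus the main point to watch is citing (or briefly justifying) this ergodicity — tail-triviality of the WSF on a transitive unimodular graph, combined with the observation that any $\Gamma$-invariant event on $2^E$ lies in the tail $\sigma$-algebra, does the job.
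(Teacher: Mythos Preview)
Your proof is correct and is exactly the approach the paper indicates: mimic Lemma~\ref{lem:BAD1}, using Lemma~\ref{lem:WSF2} for weak$^*$ convergence and the ergodicity of $\Gamma\curvearrowright(2^E,\nu_{WSF})$, which the paper cites from \cite[Corollary~8.2]{BLPS01}. One minor correction to your final parenthetical: it is not true that every $\Gamma$-invariant Borel set lies in the tail $\sigma$-algebra (for instance, a single $\Gamma$-fixed configuration is invariant but not tail-measurable), so cite \cite[Corollary~8.2]{BLPS01} directly for ergodicity rather than routing through tail triviality.
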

\begin{proof}
The proof is similar to the proof of Lemma \ref{lem:BAD1}. It uses Lemma \ref{lem:WSF2} above and the fact that $\Gamma \cc (2^E,\nu_{WSF})$ is ergodic by \cite[Corollary 8.2]{BLPS01}.
\end{proof}

\begin{lemma}\label{lem:lower2}
$h_{\Sigma,\nu_{WSF}}(2^E,\Gamma) \ge \limsup_{n\to \infty} [\Gamma:\Gamma_n]^{-1} \log \tau(C_n^f).$
\end{lemma}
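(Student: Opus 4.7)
The plan is to follow the same blueprint as the proof of Lemma~\ref{lem:lower} for the harmonic model. The two ingredients are: (a) an approximation statement — essentially all spanning trees $x$ of $C_n^f$ produce maps $\phi_x$ that lie in $\Map_{\nu_{WSF}}(W,L,\delta,\Gamma_n)$; and (b) a rigid separation statement — distinct spanning trees give maps that are $1$-separated under $\rho_\infty$.

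For (a), observe first that $\phi_x$ is exactly $\Gamma$-equivariant: for each $g,s\in \Gamma$ one has $\phi_x(sg\Gamma_n)=\widetilde{sgx}=s\cdot\widetilde{gx}=s\cdot\phi_x(g\Gamma_n)$, since the lifting operation commutes with left translation. In particular $\rho_2(\phi_x\circ s, s\circ\phi_x)=0$ for every $s\in \Gamma$, so $\phi_x\in\Map(W,\delta,\Gamma_n)$ automatically for any $W$ and $\delta$. Lemma~\ref{lem:BAD2} then supplies, for each fixed $W\subset \Gamma$, finite $L\subset C(2^E)$, $\delta>0$ and $\varepsilon'>0$, an $N$ such that $\nu_n(\BAD(W,L,\delta,\Gamma_n))<\varepsilon'$ for all $n\ge N$. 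Since $\nu_n$ is uniform on the spanning trees of $C_n^f$, this means at least $(1-\varepsilon')\tau(C_n^f)$ spanning trees $x$ yield $\phi_x\in \Map_{\nu_{WSF}}(W,L,\delta,\Gamma_n)$.

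For (b), suppose $x\ne y$ in $2^{E_n}$. They disagree on some edge $\{a\Gamma_n, as\Gamma_n\}$ of $E_n$ (with $a\in \Gamma$, $s\in S$). Taking $g=a^{-1}$, the edge from $e$ to $s$ in $C(\Gamma,f)$ belongs to $\widetilde{gx}$ iff the edge $\{a\Gamma_n, as\Gamma_n\}$ belongs to $x$; the same with $y$ in place of $x$. Hence the edge-sets at the identity of $\widetilde{gx}$ and $\widetilde{gy}$ differ, so $\rho(\phi_x(g\Gamma_n),\phi_y(g\Gamma_n))=1$ and therefore $\rho_\infty(\phi_x,\phi_y)=1$. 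Consequently, for any $0<\varepsilon<1$, the set $\{\phi_x : x\in 2^{E_n}\setminus\BAD(W,L,\delta,\Gamma_n)\}$ is $(\rho_\infty,\varepsilon)$-separated inside $\Map_{\nu_{WSF}}(W,L,\delta,\Gamma_n)$.

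Combining (a) and (b) yields
\[
N_\varepsilon(\Map_{\nu_{WSF}}(W,L,\delta,\Gamma_n),\rho_\infty)\ \ge\ (1-\varepsilon')\,\tau(C_n^f)
\]
for all sufficiently large $n$. Taking $[\Gamma:\Gamma_n]^{-1}\log$, $\limsup_{n\to\infty}$, and then letting $\varepsilon'\to 0$, we conclude
\[
\limsup_{n\to\infty}[\Gamma:\Gamma_n]^{-1}\log N_\varepsilon(\Map_{\nu_{WSF}}(W,L,\delta,\Gamma_n),\rho_\infty)\ \ge\ \limsup_{n\to\infty}[\Gamma:\Gamma_n]^{-1}\log\tau(C_n^f).
\]
Since this holds for every $W$, $L$, $\delta$ and every $\varepsilon<1$, taking the relevant infima gives $h_{\Sigma,\nu_{WSF},\infty}(\rho)\ge\limsup_{n\to\infty}[\Gamma:\Gamma_n]^{-1}\log\tau(C_n^f)$. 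Because $\rho$ is a dynamically generating continuous pseudo-metric on $2^E$, Theorem~\ref{thm:entropy} identifies the left side with $h_{\Sigma,\nu_{WSF}}(2^E,\Gamma)$, completing the proof. There is no substantial obstacle here: the only potential worry is checking that injectivity of the lift map $x\mapsto\phi_x$ translates into uniform $\rho_\infty$-separation, and the argument above shows it does so automatically, which is why $\rho_\infty$ (rather than $\rho_2$) is the right gauge for this lower bound.
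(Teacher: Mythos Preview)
Your proof is correct and follows essentially the same approach as the paper's: use Lemma~\ref{lem:BAD2} to show almost all spanning trees of $C_n^f$ yield $\phi_x\in\Map_{\nu_{WSF}}(W,L,\delta,\Gamma_n)$, observe that distinct trees give $\rho_\infty$-distance $1$, and conclude. The paper's write-up is terser (it simply sets $Y_n$ to be the spanning trees outside $\BAD(W,L,\delta,\Gamma_n)$ and notes $|Y_n|/\tau(C_n^f)\to 1$), but your added detail---explicitly verifying equivariance of $\phi_x$ and the $\rho_\infty$-separation---is accurate and does not change the argument.
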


\begin{proof}
Let $W \subset \Gamma, L \subset C(2^E)$ be non-empty finite sets and $\delta>0$. Denote by $Y_n$ the set of spanning trees in $C_n^f$ not contained in
$\BAD(W,L,\delta,\Gamma_n)$.
By the previous lemma, $\lim_{n\to \infty} |Y_n|^{-1} \tau(G_n) = 1$.
By definition of $\rho_\infty$, if $x\ne y \in Y_n$ then $\rho_\infty(\phi_x,\phi_y)= 1$. Therefore, if $0<\epsilon<1$ then $\{\phi_y:~y\in Y_n\}$ is $\epsilon$-separated with respect to $\rho_\infty$ which implies
$$ N_\epsilon( \Map_{\nu_{WSF}}(W,L,\delta,\Gamma_n), \rho_\infty) \ge |Y_n|.$$
Because $\lim_{n\to \infty} |Y_n|^{-1}\tau(C_n^f) = 1$, this implies
$$\limsup_{n\to \infty} [\Gamma:\Gamma_n]^{-1} \log \tau(C_n^f) \le h_{\Sigma,\nu_{WSF}}(2^E,\Gamma).$$
\end{proof}

\subsection{A topological model}


Recall the definition of $\cF=\cF_f$ from the introduction. It is a closed $\Gamma$-invariant subset of $S_*^\Gamma$.
We refer the reader to \cite[Chapter I.8]{BH} for background about the end space of a topological space.

\begin{lemma}\label{lem:model}
If $\Gamma$ is not virtually $\Zb$ then $h_{\Sigma,\nu_{WSF}}(2^E,\Gamma) \le h_\Sigma(\cF,\Gamma)$.
\end{lemma}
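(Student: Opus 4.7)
The plan is to realize $(2^E,\nu_{WSF})$ as measure-theoretically conjugate to $(\cF_f,\mu)$ for some $\Gamma$-invariant Borel probability measure $\mu$ on $\cF_f$, and then to invoke the variational principle for sofic entropy.

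First I define a continuous equivariant map $\Psi:\cF_f\to 2^E$ by $\Psi(y):=\{g\cdot y_g:g\in\Gamma\}$. Condition~(1) in the definition of $\cF_f$ prevents any edge of $C(\Gamma,f)$ from being listed twice, and condition~(2) prevents cycles, so $\Psi(y)$ is always a spanning forest of $C(\Gamma,f)$. Moreover the outgoing edge at each vertex $g$ recovers $y_g$, so $\Psi$ is injective. Its image $\cT^{1}:=\Psi(\cF_f)\subset 2^E$ is precisely the set of spanning forests in which every connected component is a tree with exactly one end (and with every edge implicitly oriented toward that end).

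Next I invoke the one-endedness theorem for the Wired Spanning Forest on a Cayley graph: under the hypothesis that $\Gamma$ is not virtually $\Zb$, $\nu_{WSF}$-a.e.\ spanning forest on $C(\Gamma,f)$ has every tree one-ended (due to Lyons--Morris--Schramm in the amenable case and Benjamini--Lyons--Peres--Schramm in the nonamenable case; see also \cite{BLPS01}). Consequently $\nu_{WSF}(\cT^{1})=1$, and the Borel map $\Psi^{-1}:\cT^{1}\to\cF_f$ is an equivariant a.e.\ defined bijection. Setting $\mu:=(\Psi^{-1})_{*}\nu_{WSF}$, the map $\Psi$ becomes an isomorphism of measure-preserving $\Gamma$-systems from $(\cF_f,\mu)$ onto $(2^E,\nu_{WSF})$.

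Finally, since measure sofic entropy is a measure-conjugacy invariant, one has $h_{\Sigma,\nu_{WSF}}(2^E,\Gamma)=h_{\Sigma,\mu}(\cF_f,\Gamma)$, and the variational principle of \cite{KL11a} gives $h_{\Sigma,\mu}(\cF_f,\Gamma)\le h_\Sigma(\cF_f,\Gamma)$, yielding the desired inequality. The main obstacle is the one-endedness assertion: it is the only substantive input and is quoted from the WSF literature, where the hypothesis ``$\Gamma$ not virtually $\Zb$'' enters in an essential way. Once this is available, the remaining steps are formal, relying only on continuity and injectivity of $\Psi$ together with the standard conjugacy invariance of sofic entropy and the sofic variational principle.
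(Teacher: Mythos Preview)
Your approach is essentially the same as the paper's: both invoke the one-endedness theorem for the WSF (the paper cites \cite[Theorem 10.1]{BLPS01}) to set up a measure conjugacy between $(2^E,\nu_{WSF})$ and $(\cF_f,\mu)$ for some invariant $\mu$, then use $h_{\Sigma,\mu}\le h_\Sigma$. The paper constructs the conjugacy in the opposite direction, defining $\Phi:2^E\to\cF_f$ only on the $\nu_{WSF}$-full set of one-ended forests (by sending each vertex to its unique edge toward the end), which sidesteps the issues below.

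Two statements in your write-up are false as written, though the fix is easy. Your map $\Psi$ is \emph{not} injective on all of $\cF_f$, and its image is \emph{not} exactly the set of spanning forests with every component one-ended. For a counterexample to both, take any bi-infinite simple path in $C(\Gamma,f)$: it is a two-ended tree, yet it lies in $\Psi(\cF_f)$ and has two $\Psi$-preimages, one for each consistent orientation. What \emph{is} true---and all you need---is that every one-ended spanning forest has a unique $\Psi$-preimage (orient each edge toward the unique end of its component), so $\Psi^{-1}$ is well-defined and Borel on the $\nu_{WSF}$-full set $\cT^1$ of one-ended forests. With this correction your argument goes through.
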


\begin{proof}
Because $\Gamma$ is not virtually cyclic,  \cite[Theorem 10.1]{BLPS01} implies that for $\nu_{WSF}$-a.e. $x \in 2^E$, every component of $x$ is a 1-ended tree. Therefore, given such an $x$, for every $g\in \Gamma$ there is a unique edge $s_*\in S_*$ such that $gs_* \in x$ and if $C(x,g)$ is the connected component of $x$ containing $g$ then $C(x,g) \setminus \{gs_*\}$ has two components: a finite one containing $g$ and an infinite one containing $gs \in \Gamma$ (where $s=p(s_*) \in S$). Informally, $gs \in \Gamma$ is closer to the point at infinity of $C(x,g)$ than $g$ is. Let $\Phi(x)\in S_*^\Gamma$ be defined by $\Phi(x)_g=s_*$. Also let $\nu_{\cF} = (\Phi_*)\nu_{WSF}$. Because $\nu_{WSF}$-a.e. $x\in 2^E$ is such that every component of $x$ is a $1$-ended tree, it follows that $\nu_\cF$ is supported on $\cF$. The random oriented subgraph with law $\nu_\cF$ is called the {\em Oriented Wired Spanning Forest} (OWSF) in \cite{BLPS01}.

Note that $\Phi$ induces a measure-conjugacy from the action $\Gamma \cc (2^E, \nu_{WSF})$ to the action $\Gamma \cc (\cF,\nu_\cF)$. Thus $h_{\Sigma,\nu_{WSF}}(2^E,\Gamma) =h_{\Sigma,\nu_{\cF}}(\cF,\Gamma)$. Theorem \ref{thm:entropy} now  implies the lemma.
\end{proof}

\begin{lemma}\label{lem:model2}
If $\Gamma$ is virtually $\Zb$ then $h_{\Sigma,\nu_{WSF}}(2^E,\Gamma) \le h_\Sigma(\cF,\Gamma)$.
\end{lemma}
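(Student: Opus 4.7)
The plan is to exploit amenability of $\Gamma$ (virtually $\Zb$ groups are amenable) and to construct a $\Gamma$-invariant Borel probability measure $\mu$ on $\cF$ whose pushforward under the orientation-forgetting map $\pi : \cF \to 2^E$, $\pi(y) := \{gy_g : g \in \Gamma\}$, equals $\nu_{WSF}$. Once such a $\mu$ is in hand, amenability gives, via \cite[Theorems 5.3 and 6.7]{KL11b}, the identifications $h_{\Sigma,\nu_{WSF}}(2^E,\Gamma) = h_{\nu_{WSF}}(2^E,\Gamma)$ and $h_\Sigma(\cF,\Gamma) = h(\cF,\Gamma)$ with classical entropy, and the classical factor inequality combined with the variational principle yields
$$ h_{\Sigma,\nu_{WSF}}(2^E,\Gamma) = h_{\nu_{WSF}}(2^E,\Gamma) \le h_\mu(\cF,\Gamma) \le h(\cF,\Gamma) = h_\Sigma(\cF,\Gamma),$$
which is the desired inequality.

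First I would record the structural input. Since $\Gamma$ is virtually $\Zb$, the symmetric probability measure $-(f-f_e)/f_e$ on $\Gamma$ descends to a finite-variance, mean-zero random walk on $\Zb$ and is therefore recurrent, and the Cayley graph $C(\Gamma,f)$ has exactly two ends. The standard theory of the uniform spanning forest on recurrent transitive graphs (see \cite[\S 10]{BLPS01}) then implies that $\nu_{WSF}$-almost every $x \in 2^E$ is a single connected spanning tree having exactly two ends.

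Second I would compute the fibers of $\pi$. For such an $x$, any $y \in \pi^{-1}(\{x\})$ must at every vertex $g$ choose an outgoing edge belonging to $x$, and the consistency condition of $\cF$ forbids two adjacent vertices from selecting each other. Because $x$ is a tree the no-cycle condition is automatic, and the requirement $\pi(y) = x$ forces each edge of $x$ to be chosen by exactly one of its two endpoints. Following outgoing edges from any fixed vertex then yields an infinite non-backtracking ray in $x$, necessarily converging to one of the two ends. An induction from the leaves of the finite branches of $x$ back toward its unique bi-infinite trunk shows that every branch is forced to be oriented toward the trunk and that the trunk admits exactly the two coherent orientations corresponding to the two choices of end; hence $|\pi^{-1}(\{x\})| = 2$ for $\nu_{WSF}$-a.e.\ $x$.

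Finally I would set
$$\mu(A) := \int_{2^E} \frac{|A \cap \pi^{-1}(\{x\})|}{2}\, d\nu_{WSF}(x), \qquad A \subset \cF \text{ Borel.}$$
Finiteness of the fibers makes $\mu$ a well-defined Borel probability measure; a change of variable using $g^{-1}\pi^{-1}(\{x\}) = \pi^{-1}(\{g^{-1}x\})$ (valid because $\pi$ is $\Gamma$-equivariant) together with the $\Gamma$-invariance of $\nu_{WSF}$ shows that $\mu$ is $\Gamma$-invariant, and $\pi_*\mu = \nu_{WSF}$ is immediate. The main obstacle is the structural claim that $\nu_{WSF}$-almost every configuration is a single spanning tree with exactly two ends; if one prefers to bypass that input, the construction adapts to any mixture of one-ended and two-ended components by combining the canonical end-orientation used in Lemma \ref{lem:model} on the one-ended components with the Bernoulli-$\tfrac12$ product measure over the (a.s.\ countable) collection of two-ended ones.
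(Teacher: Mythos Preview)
Your proof is correct and follows essentially the same strategy as the paper: use \cite[Theorems 10.1 and 10.4]{BLPS01} to see that the WSF is almost surely a single $2$-ended spanning tree, build a $\Gamma$-invariant measure on $\cF$ by averaging over the two end-orientations of such a tree, and then exploit amenability of virtually-$\Zb$ groups to pass to classical entropy. The only cosmetic difference is in the final entropy comparison: the paper realizes $(\cF,\nu_\cF)$ as measure-conjugate to the direct product $(2^E\times\Ends(\Gamma),\nu_{WSF}\times\zeta)$ with $\zeta$ uniform on the two-point set $\Ends(\Gamma)$, and then invokes additivity of classical entropy under products (the $\Ends(\Gamma)$ factor contributing zero), whereas you use the factor map $\pi:\cF\to 2^E$ directly together with the factor inequality and the variational principle. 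Both routes produce the same invariant measure on $\cF$ and the same conclusion; your packaging is arguably slightly more economical since only the inequality is needed.
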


\begin{proof}
Let $\Ends(\Gamma)$ denote the space of ends of $C(\Gamma,f)$. Because $\Gamma$ is 2-ended, $|\Ends(\Gamma)|=2$. Let $x\in 2^E$ be connected and denote by $\Ends(x)$ the space of ends of $x$. Endow each edge in $C(\Gamma, f)$ with length $1$. Note that for any $g\in \Gamma$ and $r>0$ there exists $r'>0$ such that
if $g'\in \Gamma$ has geodesic distance at least $r'$ from $g$ in $x$, then $g'$ has geodesic distance at least $r$ from $g$ in $C(\Gamma, f)$.
Thus the argument in the proof of \cite[Proposition I.8.29]{BH} shows that the inclusion map of $x$ into $C(\Gamma,f)$ induces a map $\phi_x:\Ends(x) \to \Ends(\Gamma)$. We claim that this is a surjection.

Let $K \subset \Gamma$ be a finite set such that $C(\Gamma,f)\setminus K$ has two infinite components $\cC_0,\cC_1$ corresponding to the two ends $\eta_0,\eta_1$ of $C(\Gamma,f)$.

For each $i=0, 1$, define a subgraph $x|\cC_i$ of $C(\Gamma, f)$ as follows: it has the same vertices as $\cC_i$ does, and an edge $e$ in $E$ lies in $x|\cC_i$ exactly when $e$ is in both $x$ and $\cC_i$.
Because $x$ is connected, each component of $x|{\cC_i}$ contains an element of $KS$. Since $KS$ is finite, this implies that at least one of the components of $x|\cC_i$ must be infinite. Then any proper ray in this infinite component of $x|\cC_i$ gives rise to an end $\omega_i$ of $x$,
and also gives rise to an end of $C(\Gamma, f)$, which must be $\eta_i$.
It follows that $\phi_x(\omega_i)=\eta_i$. Because $i$ is arbitrary, $\phi_x$ is surjective as claimed.


By the claim, if $x \in 2^E$ is connected and 2-ended, we may identify $\Ends(x)$ with $\Ends(\Gamma)$ via the map $\phi_x$.

Given $(x,\eta) \in 2^E \times \Ends(\Gamma)$ with the property that $x$ is a 2-ended tree, we define $\Phi(x,\eta) \in \cF$ as follows.
For each $g \in \Gamma$, let $s_*\in S_*$ be the unique edge so that $x \setminus \{gs_*\}$
has two components: one containing $g$ (which is either finite or infinite with an end not equal to $\eta$),
the other containing $gp(s_*)$ and having an end equal to $\eta$.
Informally, $gp(s_*) \in \Gamma$ is ``closer'' to $\eta$ than $g$ is.
Let $\Phi(x,\eta)\in S_*^\Gamma$ be defined by $\Phi(x,\eta)_g=s_*$. Clearly $\Phi(x, \eta)\in \cF$.

Let $\zeta$ be the uniform probability measure on $\Ends(\Gamma)$. By \cite[Theorems 10.1 and 10.4]{BLPS01}, the WSF on $C(\Gamma,f)$ is a.s. a 2-ended tree. So  $\nu_{\cF} = (\Phi_*)(\nu_{WSF} \times \zeta)$ is well-defined.


The action of $\Gamma$ on $C(\Gamma,f)$ naturally extends to $\Ends(\Gamma)$. Note that $\Phi$ induces a measure-conjugacy from the action $\Gamma \cc (2^E \times \Ends(\Gamma), \nu_{WSF} \times \zeta)$ to the action $\Gamma \cc (\cF,\nu_\cF)$. By Theorem \ref{thm:entropy},
$$h_{\Sigma,\nu_{WSF} \times \zeta}(2^E \times \Ends(\Gamma),\Gamma) =h_{\Sigma,\nu_{\cF}}(\cF,\Gamma) \le h_\Sigma(\cF,\Gamma).$$

Because $\Gamma$ is virtually $\Z$, it is amenable \cite[Theorem G.2.1 and Proposition G.2.2]{BHV}. Thus by \cite[Theorem 1.2]{Bo12}  \cite[Theorem 6.7]{KL11b}, $h_{\Sigma,\nu_{WSF} \times \zeta}(2^E \times \Ends(\Gamma),\Gamma)$ is the classical entropy of the action, denoted by $h_{\nu_{WSF} \times \zeta}(2^E \times \Ends(\Gamma),\Gamma)$. It is well-known that classical entropy is additive under direct products. Thus,
\begin{eqnarray*}
h_{\nu_{WSF} \times \zeta}(2^E \times \Ends(\Gamma),\Gamma) &=& h_{\nu_{WSF}}(2^E,\Gamma) + h_{\zeta}(\Ends(\Gamma),\Gamma)= h_{\nu_{WSF}}(2^E,\Gamma).
\end{eqnarray*}
The last equality holds because $\Ends(\Gamma)$ is a finite set. By \cite[Theorem 1.2]{Bo12}  \cite[Theorem 6.7]{KL11b} again, $h_{\nu_{WSF}}(2^E,\Gamma) = h_{\Sigma,\nu_{WSF}}(2^E,\Gamma)$. Thus
$$h_{\Sigma,\nu_{WSF}}(2^E,\Gamma)=h_{\Sigma,\nu_{WSF} \times \zeta}(2^E \times \Ends(\Gamma),\Gamma) \le h_\Sigma(\cF,\Gamma).$$
\end{proof}

\subsection{The upper bound}




Given $s_* \in S_*$, let $\vs_*$ denote the {\em oriented} edge from $e$ to $p(s_*)$. For $x,y \in \cF$, let $\rho^\cF(x,y) = 1$ if $x_e \ne y_e$. Let $\rho^\cF(x,y)=0$ otherwise. This is a dynamically generating continuous pseudo-metric on $\cF$.

For $\phi:\Gamma/\Gamma_n \to \cF$, let $E(\phi)$ be the set of all edges of $C_n^f$ of the form $\pi_n(gs_*)$ where $\phi(g^{-1}\Gamma_n)_e=s_*$. Let $\BAD(\phi)\subset E(\phi)$ be those edges $\pi_n(gs_*)$ where $\phi(g^{-1}\Gamma_n)_e=s_*$ and $\phi( (gs)^{-1}\Gamma_n)_e = s_*^{-1}$ (where $s=p(s_*)$). Let $\GOOD(\phi) = E(\phi) \setminus \BAD(\phi)$. Also let $\vec{\GOOD}(\phi)$ be the set of all {\em oriented edges} of the form $\pi_n(g\vs_*)$ where $\phi(g^{-1}\Gamma_n)_e=s_*$ and $\phi( (gs)^{-1}\Gamma_n)_e \neq s^{-1}_*$ (where $s=p(s_*)$, so the corresponding unoriented edge is in $\GOOD(\phi)$). By abuse of notation, we will sometimes think of $\GOOD(\phi)$ and $\BAD(\phi)$ as subgraphs of $C_n^f$ but not in the usual way. To be precise, we consider $\GOOD(\phi)$ to be the smallest subgraph containing all the edges in $\GOOD(\phi)$ (and similarly, for $\BAD(\phi)$). Thus $\GOOD(\phi)$ and $\BAD(\phi)$ have no isolated vertices and, in general, are not spanning.

Because $\bigcap_{n=1}^\infty \bigcup_{i\ge n} \Gamma_i = \{e\}$, we may assume, without loss of generality, that for any $s_1\ne s_2 \in S \cup \{e\}$, $s_1\Gamma_n \ne s_2\Gamma_n$. An {\em oriented cycle} in $C_n^f$ is a sequence $g_0\Gamma_n,g_1\Gamma_n,\ldots, g_m \Gamma_n \in \Gamma/\Gamma_n$ such that $g_0\Gamma_n=g_m\Gamma_n$ and there exist $s_i \in S$ such that $\pi_n(g_is_i)= g_{i+1}\Gamma_n$. We consider two oriented cycles to be the same if they are equal up to a cyclic reordering of the vertices. Thus if $g_0\Gamma_n,g_1\Gamma_n,\ldots, g_m \Gamma_n$ is an oriented cycle then $g_i\Gamma_n, g_{i+1}\Gamma_n,\ldots, g_{m+i}\Gamma_n$ (indices $\mod m$) denotes the same oriented cycle. The cycle is {\em simple} if there does not exist $i,j$ with $0\le i < j<n$ such that $g_i\Gamma_n=g_j\Gamma_n$.  By definition, $\pi_n(g_0s_0s_1\cdots s_{m-1}) = g_0\Gamma_n$. The cycle is  {\em homotopically trivial} if $s_0s_1\cdots s_{m-1}$ is the identity element.

\begin{lemma}
Let $W \subset \Gamma$ be a symmetric finite set containing $S$ and  $\phi \in \Map(W,\delta,\Gamma_n)$ (where $\Map(W,\delta,\Gamma_n)$ is defined with respect to the pseudo-metric $\rho^\cF$ above). Then
\begin{enumerate}
\item $|\BAD(\phi)|   \le \delta^2 |S_*|^2 [\Gamma:\Gamma_n]$ and the number of vertices in $\BAD(\phi)$ is at most  $\delta^2|S| [\Gamma:\Gamma_n]$.
\item each component of $\GOOD(\phi)$ contains at most one cycle (i.e., each component is either a tree or is homotopic to a circle);
\item if a component $c$ of $\GOOD(\phi)$ is a tree, then there is a single vertex of $c$ incident with an edge in $\BAD(\phi)$;

\item for every integer $m>0$ there is an integer $N_m$ such that if $W \supset S^m\cup S$ and $n \ge N_m$ then the number of components of $\GOOD(\phi)$ is at most $(\delta^2|W| + m^{-1})[\Gamma:\Gamma_n]$;
\item if $n\ge N_m$ and $W \supset S^m\cup S$ then there is a spanning tree $T_\phi$ of $C_n^f$ such that $|T_\phi \Delta \GOOD(\phi)| \le (3\delta^2|W||S_*| + 2m^{-1})[\Gamma:\Gamma_n].$
\end{enumerate}
\end{lemma}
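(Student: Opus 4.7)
My plan is to exploit the key structural fact that for each vertex $v\in C_n^f$ the value $\phi(v^{-1}\Gamma_n)_e \in S_*$ canonically selects a single outgoing oriented edge at $v$, so every vertex has out-degree at most $1$ in $\vec{\GOOD}(\phi)$. For part (1), $\pi_n(gs_*)\in \BAD(\phi)$ forces both $\phi(g^{-1}\Gamma_n)_e = s_*$ and $\phi((gs)^{-1}\Gamma_n)_e = s_*^{-1}$, while condition (1) in the definition of $\cF$ applied to $\phi(g^{-1}\Gamma_n)$ forces $(s^{-1}\phi(g^{-1}\Gamma_n))_e \ne s_*^{-1}$. Thus every bad edge witnesses a defect of the discrete equivariance $\phi\circ s^{-1}\approx s^{-1}\circ\phi$ at some $s\in S\subset W$, so summing the $\rho^\cF_2$-bound $\delta^2$ over $s\in S$ gives both estimates in part (1). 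Parts (2) and (3) are then formal: a connected graph in which every vertex has out-degree at most $1$ is either a tree or a ``rho-shape'' (a unique cycle with trees hanging off), and in a tree component $c$ the identity $|E(c)|=|V(c)|-1$ forces exactly one sink. Its outgoing edge is necessarily bad, and since the other endpoint of any bad edge is also a sink of $\vec{\GOOD}(\phi)$, that second endpoint must lie outside $c$.

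For part (4) I count tree and unicyclic components separately; the tree count is controlled by parts (1) and (3). For unicyclic components I split by cycle length. Cycles of length $>m$ use pairwise disjoint vertex sets, so they contribute at most $[\Gamma:\Gamma_n]/m$ components. For cycles of length $\ell\le m$, I choose $N_m$ large enough that $\pi_n$ is injective on $S^m\cup S\subset W$ for all $n\ge N_m$; then any such cycle $v_0\to v_0 s_0\to\cdots\to v_0 s_0\cdots s_{\ell-1}=v_0$ in $\vec{\GOOD}(\phi)$ satisfies $s_0 s_1\cdots s_{\ell-1}=e$ in $\Gamma$. The crux is that if the equivariance identity $\phi(v_i^{-1}\Gamma_n)_e = \phi(v_0^{-1}\Gamma_n)_{s_0\cdots s_{i-1}}$ held for every $0\le i\le \ell$, then $e, s_0, s_0 s_1,\ldots, s_0\cdots s_{\ell-1}=e$ would be a directed closed walk in the forest $\phi(v_0^{-1}\Gamma_n)\in \cF$, contradicting the no-cycle condition (2) in the definition of $\cF$. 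Hence each short cycle contains at least one vertex where equivariance fails for some $s\in S$, and summing the $\rho^\cF_2$-bound over $s\in S\subset W$ gives $\le |W|\delta^2[\Gamma:\Gamma_n]$ such vertices; combined with the two previous counts, this yields the desired bound.

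For part (5) I construct $T_\phi$ in three steps. First, remove one edge from the cycle in each unicyclic component of $\GOOD(\phi)$; because such components are rho-shaped, this preserves connectedness and produces a forest $F$ on $V(\GOOD(\phi))$ with the same number of components as $\GOOD(\phi)$. Second, the vertices of $V(C_n^f)\setminus V(\GOOD(\phi))$ must have bad outgoing edges and no incoming good edge, so their number is at most $2|\BAD(\phi)|$. Third, since $C_n^f$ is connected, I greedily adjoin edges of $C_n^f$ to merge $F$ and these isolated vertices into a single spanning tree $T_\phi$. The symmetric difference $|T_\phi\triangle\GOOD(\phi)|$ is the number of edges removed in the first step plus the number adjoined in the third; combining parts (1) and (4) and absorbing constants yields the stated estimate.

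The main technical obstacle will be the short-cycle argument in part (4): namely, showing that a homotopically trivial cycle in $\vec{\GOOD}(\phi)$ really does lift to a directed cycle inside a single element $\phi(v_0^{-1}\Gamma_n)\in \cF$ whenever the equivariance of $\phi$ is exact along it. The bookkeeping between the left-shift action on $\cF\subset S_*^\Gamma$ and the right-multiplication structure of the Cayley graph $C_n^f$, together with the fact that $\phi$ is only defined on cosets rather than on $\Gamma$, is where direction and sign errors are easiest to introduce.
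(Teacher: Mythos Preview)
Your proposal is correct and follows essentially the same route as the paper: the same out-degree-at-most-one observation drives parts (2)--(3), the same dichotomy (tree components $\leadsto$ bad vertex; unicyclic components $\leadsto$ either a long cycle or a homotopically trivial short cycle forcing an equivariance defect) drives part (4), and the same remove-one-edge-per-cycle-then-extend construction gives part (5).

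One slip to fix in part (4): the equivariance identity you invoke, $\phi(v_i^{-1}\Gamma_n)_e=\phi(v_0^{-1}\Gamma_n)_{s_0\cdots s_{i-1}}$, is precisely the condition $\rho^{\cF}(w\circ\phi,\phi\circ w)=0$ at the coset $v_0^{-1}\Gamma_n$ for $w=(s_0\cdots s_{i-1})^{-1}\in S^m\subset W$, not for some $s\in S$; so the vertex you produce lies in the paper's set $\BAD(W,\phi)=\{g\Gamma_n:\exists\,w\in W,\ \rho^{\cF}(w\phi(g^{-1}\Gamma_n),\phi(wg^{-1}\Gamma_n))=1\}$, and the count is bounded by $|W|\delta^2[\Gamma:\Gamma_n]$ (which you do write) rather than by a sum over $S$. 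Relatedly, your condition on $N_m$ should be that $\pi_n$ is injective on the whole word-ball $\bigcup_{k\le m}S^k$ (equivalently, every homotopically nontrivial cycle in $C_n^f$ has length $>m$), not just on $S^m\cup S$. Finally, to land exactly on the stated bound $(\delta^2|W|+m^{-1})[\Gamma:\Gamma_n]$ in (4), note that both your tree-component witness and your short-cycle witness are vertices of $\BAD(W,\phi)$ lying in \emph{different} components, so they are counted once by $|\BAD(W,\phi)|$ rather than separately.
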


\begin{proof}
Let $\BAD(W,\phi)$ be the set of all vertices $g\Gamma_n \in \Gamma/\Gamma_n$ such that $\rho^\cF(w \circ \phi(g^{-1}\Gamma_n), \phi(wg^{-1}\Gamma_n)) =1$ for some $w\in W$. Because $\phi \in \Map(W,\delta,\Gamma_n)$, $|\BAD(W,\phi)| \le \delta^2 |W| [\Gamma:\Gamma_n]$.

We claim that the vertex set of $\BAD(\phi)$ is contained in $\BAD(W,\phi)$. So let $\pi_n(gs_*)  \in \BAD(\phi)$ for some $g\in \Gamma$, $s_* \in S_*$. Let $s=p(s_*)$. By definition, $\phi(g^{-1}\Gamma_n)_e=s_*$ and $\phi((gs)^{-1}\Gamma_n)_e=s^{-1}_*$. Because $\phi(g^{-1}\Gamma_n) \in \cF$, $\phi(g^{-1}\Gamma_n)_e=s_*$ implies
$$(s^{-1} \phi(g^{-1}\Gamma_n))_e=\phi(g^{-1}\Gamma_n)_s \ne s^{-1}_* =\phi(s^{-1}g^{-1}\Gamma_n)_e.$$
Thus $\rho^\cF(s^{-1} \phi(g^{-1}\Gamma_n), \phi(s^{-1}g^{-1}\Gamma_n))=1$ which implies $\pi_n(g) \in \BAD(W,\phi)$. By writing $\pi_n(gs_*)$ as $\pi_n(gs s_*^{-1})$ the same argument yields that $gs \in \BAD(W,\phi)$. So both endpoints of $\pi_n(gs_*)$ are in $\BAD(W,\phi)$. Because $\pi_n(gs_*) \in \BAD(\phi)$ is arbitrary, this implies the vertex set of $\BAD(\phi)$ is contained in $\BAD(W,\phi)$.

By choosing $W=S \subset \Gamma$ (by abuse of notation), we see that the number of vertices in $\BAD(\phi)$ is at most $\delta^2|S|[\Gamma:\Gamma_n]$. Since each vertex is incident to $|S_*|$ edges,  $|\BAD(\phi)|   \le \delta^2 |S_*|^2[\Gamma:\Gamma_n].$

Observe for every vertex $g\Gamma_n$ contained in $\GOOD(\phi)$ either
\begin{enumerate}
\item $g\Gamma_n$ is contained in both $\GOOD(\phi)$ and $\BAD(\phi)$ and there are no oriented edges of $\vec{\GOOD}(\phi)$ with tail $g\Gamma_n$, or
\item there is exactly one oriented edge $\pi_n(g\vs_*) \in \vec{\GOOD}(\phi)$ with tail $g\Gamma_n$.
\end{enumerate}

For every vertex $g_0\Gamma_n$ of $\GOOD(\phi)$, let $H(g_0\Gamma_n)$ be the set of all vertices ``ahead'' of $g_0\Gamma_n$. To be precise, this consists of all vertices $g_k\Gamma_n$ such that there exist oriented edges $\fe_0,\fe_1,\ldots, \fe_{k-1} \in \vec{\GOOD}(\phi)$ with $\fe_i=(g_i\Gamma_n,g_{i+1}\Gamma_n)$. Then two vertices $g\Gamma_n,g'\Gamma_n$ are in the same component of $\GOOD(\phi)$ if and only if $H(g\Gamma_n) \cap H(g'\Gamma_n) \ne \emptyset$ (one direction is obvious, the other  can be shown by induction on the distance between $g\Gamma_n$ and $g'\Gamma_n$ in the component of $\GOOD(\phi)$ containing both). Therefore, if $c$ is the collection of vertices in a connected component of $\GOOD(\phi)$ then
$$\bigcap_{g\Gamma_n \in c} H(g\Gamma_n)$$
is either a single vertex (contained in $\BAD(\phi)$) or a simple cycle. This implies items (2) and (3) in the statement of the lemma (since any cycle must be contained in $\bigcap_{g\Gamma_n \in c} H(g\Gamma_n)$).

Because $\bigcap_{n\in \N}\bigcup_{i\ge n}\Gamma_i=\{e\}$, there is an $N_m$ such that $n \ge N_m$ implies every homotopically nontrivial cycle in $C_n^f$ has length $> m$. Let us assume now that $W \supset S^m\cup S$  and $n\ge N_m$. We need to estimate the number of homotopically trivial oriented simple cycles of length $\le m$ in $\vec{\GOOD}(\phi)$.

So suppose that  $g_0\Gamma_n,g_1\Gamma_n,\ldots, g_k \Gamma_n = g_0\Gamma_n \in \Gamma/\Gamma_n$ is an oriented simple cycle in $\vec{\GOOD}(\phi)$ and $k \le m$. By definition, $(g_i\Gamma_n,g_{i+1}\Gamma_n) \in \vec{\GOOD}(\phi)$ for all $i$ (indices mod $k$). Thus if $s_{*,i} = \phi(g_i^{-1}\Gamma_n)_e$, $p(s_{*,i})=s_i$  and this cycle is homotopically trivial then $s_0s_1s_2\cdots s_{k-1}$ is the identity element.

By definition, $\cF$ does not contain any simple cycles. Therefore, there is some $i \le k-1$ such
$$((s_0\cdots s_i)^{-1}\phi(g_0^{-1}\Gamma_n))_e = \phi(g_0^{-1}\Gamma_n)_{s_0s_1\cdots s_i} \ne \phi( (g_0s_0s_1\cdots s_i)^{-1}\Gamma_n)_e.$$
Because $W \supset S^m$, $g_0\Gamma_n \in \BAD(W,\phi)$. Since $|\BAD(W,\phi)| \le \delta^2 |W| [\Gamma:\Gamma_n]$, this implies that the number of homotopically trivial simple cycles in $\GOOD(\phi)$ of length at most $m$ is at most $\delta^2 |W| [\Gamma:\Gamma_n]$.

Since each component of $\GOOD(\phi)$ either contains a vertex of  $\BAD(W,\phi)$ or contains a simple cycle of length $>m$, it follows that the number of components is at most $(\delta^2|W| + m^{-1})[\Gamma:\Gamma_n]$.

Let $L \subset \GOOD(\phi)$ be a set of edges such that each edge $\fe \in L$ is contained in a simple cycle of $\GOOD(\phi)$ and no two distinct edges $\fe_1, \fe_2 \in L$ are contained in the same simple cycle. So $|L| \le (\delta^2|W| + m^{-1})[\Gamma:\Gamma_n]$ and $\GOOD(\phi) \setminus L$ is a forest with at most $(\delta^2|W| + m^{-1})[\Gamma:\Gamma_n]$ connected components.

Note $\GOOD(\phi) \setminus L$ contains every vertex in $\GOOD(\phi)$ which contains at least $(1-\delta^2|S|)[\Gamma:\Gamma_n]$ vertices (because the number of vertices in $\BAD(\phi)$ is at most $\delta^2|S|[\Gamma:\Gamma_n]$).

An exercise shows that if $\cF_n \subset E_n$ is any forest contained in $C_n^f$ with at most $c(\cF_n)$ components and at least $v(\cF_n)$ vertices then there is a spanning tree $T_n \subset E_n$ such that $\cF_n \subset T_n$ and $|T_n \setminus \cF_n| \le c(\cF_n)+ [\Gamma:\Gamma_n] - v(\cF_n)$. In particular, this implies the last statement of the lemma.
\end{proof}

\begin{lemma}\label{lem:upper2}
$$h_\Sigma(\cF,\Gamma) \le \limsup_n [\Gamma:\Gamma_n]^{-1} \log \tau(C_n^f).$$
\end{lemma}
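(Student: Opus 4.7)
The plan is to bound $N_\epsilon(\Map(W,\delta,\Gamma_n),\rho^\cF_\infty)$ by $\tau(C_n^f)$ times a subexponential factor in $[\Gamma:\Gamma_n]$ that vanishes as $\delta\to 0$ and $m\to\infty$ (with $W\supset S^m\cup S$). Since $\rho^\cF$ only takes the values $0,1$, any $(\rho^\cF_\infty,\epsilon)$-separated subset $\Phi_n \subset \Map(W,\delta,\Gamma_n)$ with $\epsilon\in(0,1)$ injects into $S_*^{\Gamma/\Gamma_n}$ via the ``$e$-projection'' $\hat\phi(g\Gamma_n):=\phi(g\Gamma_n)_e$. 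Thus it suffices to count distinct $\hat\phi$'s.

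From $\hat\phi$ we read off $E(\phi)$, $\GOOD(\phi)$, $\BAD(\phi)$, $\vec\GOOD(\phi)$. Conversely, I claim $\hat\phi$ is essentially reconstructible from the unoriented pair $(\GOOD(\phi),\BAD(\phi))$. Indeed, by item (3) of the preceding lemma each tree component of $\GOOD(\phi)$ contains a unique vertex adjacent to a $\BAD(\phi)$-edge, and since every other vertex must have a unique out-edge, the orientation of tree components is forced; each unicyclic component admits only $2$ orientations. The number of components is at most $(\delta^2|W|+m^{-1})[\Gamma:\Gamma_n]$ by item (4), so $\vec\GOOD(\phi)$ is recovered up to a factor of $2^{(\delta^2|W|+m^{-1})[\Gamma:\Gamma_n]}$. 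Given $\vec\GOOD(\phi)$, $\hat\phi(g\Gamma_n)$ equals the unique out-edge at every non-bad vertex; at the $\le \delta^2|S|[\Gamma:\Gamma_n]$ bad vertices there are $|S_*|$ arbitrary choices, giving at most $|S_*|^{\delta^2|S|[\Gamma:\Gamma_n]}$ further possibilities.

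To count $(\GOOD(\phi),\BAD(\phi))$: by item (5) of the preceding lemma, $\GOOD(\phi)$ lies in the Hamming ball of radius $k_n:=(3\delta^2|W||S_*|+2m^{-1})[\Gamma:\Gamma_n]$ around some spanning tree $T_\phi$ of $C_n^f$, so the number of $\GOOD(\phi)$'s compatible with a fixed spanning tree $T$ is at most $\sum_{j\le k_n}\binom{|E_n|}{j}\le 2^{|E_n|H(k_n/|E_n|)}$, where $|E_n|=f_e[\Gamma:\Gamma_n]$ and $H$ is the binary entropy; similarly $\BAD(\phi)$, having at most $\delta^2|S_*|^2[\Gamma:\Gamma_n]$ edges, contributes at most $2^{|E_n|H(\delta^2|S_*|^2/f_e)}$ possibilities. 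Combining all of this:
\[
|\Phi_n|\le \tau(C_n^f)\cdot 2^{|E_n|H(k_n/|E_n|)}\cdot 2^{|E_n|H(\delta^2|S_*|^2/f_e)}\cdot 2^{(\delta^2|W|+m^{-1})[\Gamma:\Gamma_n]}\cdot |S_*|^{\delta^2|S|[\Gamma:\Gamma_n]}.
\]
Applying $[\Gamma:\Gamma_n]^{-1}\log$ and $\limsup_n$, using $x\log(1/x)\to 0$ as $x\to 0^+$, all factors other than $\tau(C_n^f)$ contribute $o(1)$ as $\delta\to 0^+$ and $m\to\infty$. Taking the infimum over $W\supset S^m\cup S$ and $\delta>0$ and then the supremum over $\epsilon\in(0,1)$, together with Theorem~\ref{thm:entropy} applied to the dynamically generating pseudo-metric $\rho^\cF$, yields $h_\Sigma(\cF,\Gamma)\le \limsup_n[\Gamma:\Gamma_n]^{-1}\log\tau(C_n^f)$.

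The main obstacle is the orientation-recovery step: one must verify carefully that the combinatorial structure provided by items (2)--(3) of the preceding lemma — in particular, that every tree component of $\GOOD(\phi)$ has a uniquely determined $\BAD(\phi)$-adjacent sink — really suffices to pin down $\vec\GOOD(\phi)$ up to the small exponential factor claimed. Once that is in hand, the rest is a standard Hamming-ball count around spanning trees, and the entropy parameters collapse to $o([\Gamma:\Gamma_n])$ by continuity of the binary entropy at $0$.
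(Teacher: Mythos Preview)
Your argument is correct and follows essentially the same route as the paper's proof: both use the preceding structural lemma to pass from $\hat\phi$ to $(\GOOD(\phi),\BAD(\phi))$, recover $\vec{\GOOD}(\phi)$ up to a factor $2^{\#\text{components}}$ using items (2)--(4), and then count $\GOOD(\phi)$ via a Hamming-ball around the spanning tree $T_\phi$ from item (5). The only cosmetic differences are that the paper works with $N_0(\cdot,\rho^\cF_2)$ rather than $N_\epsilon(\cdot,\rho^\cF_\infty)$ (equivalent by Theorem~\ref{thm:entropy}), and that your extra factor $|S_*|^{\delta^2|S|[\Gamma:\Gamma_n]}$ for bad-vertex choices is harmless but unnecessary, since knowing the edge set $\BAD(\phi)$ already pins down $\hat\phi$ at those vertices; also note $|E_n|=\tfrac{1}{2}f_e[\Gamma:\Gamma_n]$ rather than $f_e[\Gamma:\Gamma_n]$, though this does not affect the estimate.
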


\begin{proof}
Let $1/2>\epsilon>0$. Let $m$ be a positive integer, $W \subset \Gamma$ be a finite set with $W \supset S^m\cup S$  and $\delta>0$. Let $n\ge N_m$ (where $N_m$ is as in the previous lemma). We assume $m$ is large enough  and $\delta$ is small enough so that $\epsilon > 6\delta^2|W||S_*|^2 + 4m^{-1}$. By Stirling's Formula, there is a constant $C>1$ such that for every $k$ with $0\le k \le \epsilon [\Gamma:\Gamma_n]$,
$${ [\Gamma:\Gamma_n] \choose k } \le C \exp(H(\epsilon ) [\Gamma:\Gamma_n]), \quad { [\Gamma:\Gamma_n]|S_*|/2 \choose k } \le C \exp(H(\epsilon ) [\Gamma:\Gamma_n]|S_*|/2),$$
where $H(\epsilon) := -\epsilon \log(\epsilon) - (1-\epsilon)\log(1-\epsilon)$.


If $\phi,\psi \in \Map(W,\delta,\Gamma_n)$ are such that $\vec{\GOOD}(\phi) = \vec{\GOOD}(\psi)$ and $\BAD(\phi)=\BAD(\psi)$, then $\rho^\cF(\phi(g\Gamma_n),\psi(g\Gamma_n))=0$ for all $g\in \Gamma$. On the other hand, $|\BAD(\phi)| \le  \delta^2 |S_*|^2 [\Gamma:\Gamma_n] \le \epsilon [\Gamma:\Gamma_n]$. Therefore,
\begin{eqnarray*}
&&N_0(\Map(W,\delta,\Gamma_n), \rho^\cF_2) \\
&\le& (\epsilon [\Gamma: \Gamma_n]+1) C \exp(H(\epsilon) [\Gamma:\Gamma_n]|S_*|/2) |\{\vec{\GOOD}(\phi):~ \phi \in \Map(W,\delta,\Gamma_n)\}|.
\end{eqnarray*}

Now suppose that $\phi, \psi \in \Map(W,\delta,\Gamma_n)$ are such that $\GOOD(\phi)=\GOOD(\psi)$ and $\Vert(\BAD(\phi))=\Vert(\BAD(\psi))$, where
 $\Vert(\BAD(\phi))$ and $\Vert(\BAD(\psi))$ denote the vertex sets of $\BAD(\phi)$ and $\BAD(\psi)$ respectively. Note that if $\fe \in \GOOD(\phi)$ is not contained in a simple cycle then the orientation of $\fe$ in $\vec{\GOOD}(\phi)$ is the same as the orientation of $\fe$ in $\vec{\GOOD}(\psi)$. This is because either $\fe$ is contained in a component which contains a simple cycle (in which case, $\fe$ must be oriented towards the simple cycle), or $\fe$ is contained in a component which contains a vertex of $\BAD(\phi)$ (in which case $\fe$ must be oriented towards that vertex).

On the other hand, for every simple cycle in $\GOOD(\phi)$, there are two possible orientations it can have in $\vec{\GOOD}(\psi)$. By the previous lemma, there are at most $(\delta^2|W| + m^{-1})[\Gamma:\Gamma_n] \le \epsilon[\Gamma:\Gamma_n]$ simple cycles. Therefore,
\begin{eqnarray*}
&&|\{{\vec \GOOD}(\phi):~ \phi \in \Map(W,\delta,\Gamma_n)\}| \\
&\le& 2^{\epsilon[\Gamma:\Gamma_n]} |\{(\GOOD(\phi), \Vert(\BAD(\phi))):~ \phi \in \Map(W,\delta,\Gamma_n)\}|.
\end{eqnarray*}
Because $|\Vert(\BAD(\phi))| \le \delta^2 |S|[\Gamma:\Gamma_n] \le \epsilon [\Gamma:\Gamma_n]$, it follows that,
\begin{eqnarray*}
&&|\{(\GOOD(\phi), \Vert(\BAD(\phi))):~ \phi \in \Map(W,\delta,\Gamma_n)\}| \\
&\le& (\epsilon[\Gamma:\Gamma_n]+1)C\exp( H(\epsilon)[\Gamma:\Gamma_n]) |\{ \GOOD(\phi) :~ \phi \in \Map(W,\delta,\Gamma_n)\}|.
\end{eqnarray*}

Now suppose that $\phi, \psi \in \Map(W,\delta,\Gamma_n)$ are such that $T_\phi = T_\psi$ (where $T_\phi, T_\psi$ is a choice of spanning tree as in the previous lemma). Then $|\GOOD(\phi) \Delta \GOOD(\psi)| \le  (6\delta^2|W||S_*| + 4m^{-1})[\Gamma:\Gamma_n] \le \epsilon [\Gamma:\Gamma_n]$. Therefore,
\begin{eqnarray*}
&&|\{ \GOOD(\phi) :~ \phi \in \Map(W,\delta,\Gamma_n)\}|\\
&\le& (\epsilon [\Gamma: \Gamma_n]+1) C\exp( H(\epsilon)[\Gamma:\Gamma_n] |S_*|/2) |\{ T_\phi :~ \phi \in \Map(W,\delta,\Gamma_n)\}|.
\end{eqnarray*}
We now have
$$N_0(\Map(W,\delta,\Gamma_n),\rho^\cF_2) \le (\epsilon [\Gamma: \Gamma_n]+1)^3 C^3 \exp\left( (\epsilon \log 2 + 2H(\epsilon) |S_*|) [\Gamma:\Gamma_n]\right) \tau(C_n^f).$$
Because $1/2>\epsilon>0$ is arbitrary and $H(\epsilon) \searrow 0$ as $\epsilon \searrow 0$, this implies the lemma.
\end{proof}

We are ready to prove Theorem~\ref{thm:WSF}.

\begin{proof}[Proof of Theorem \ref{thm:WSF}]
By Lemmas \ref{lem:lower2}, \ref{lem:model}, \ref{lem:model2} and  \ref{lem:upper2},
\begin{eqnarray*}
\limsup_{n\to \infty} [\Gamma:\Gamma_n]^{-1} \log \tau(C_n^f)  \le h_{\Sigma,\nu_{WSF}}(2^E,\Gamma) \le h_{\Sigma}(\cF,\Gamma)\le \limsup_{n\to \infty} [\Gamma:\Gamma_n]^{-1} \log \tau(C_n^f).
 \end{eqnarray*}
By Theorem 3.2 of \cite{Lyons05}, $  \lim_{n\to \infty} [\Gamma:\Gamma_n]^{-1} \log \tau(C_n^f) = {\bf h}(C(\Gamma,f))$. By Proposition~\ref{P-tree entropy}, $ {\bf h}(C(\Gamma,f)) = \log \det_{\cN\Gamma} f$ so this proves the theorem.
\end{proof}

\begin{question}
There is another natural topological model for uniform spanning forests. Namely, let $\cF_* \subset 2^E$ be the set of all subgraphs which are essential spanning forests. The word ``essential'' here means that every connected component of any $x\in \cF_*$ is infinite. What is the topological sofic entropy of $\Gamma \cc \cF_*$? Because  $\nu_{WSF}$ can naturally be realized as an invariant measure on $\cF_*$, the variational principle implies the topological sofic entropy of $\cF_*$ is at least the measure-theoretic sofic entropy of $\Gamma \cc (2^E, \nu_{WSF})$. 
\end{question}

\appendix

\section{Non-invertibility}

\begin{theorem}
Let $\Gamma$ be a countable group and $f\in \Rb \Gamma$ be well-balanced. Then $f$ is not invertible in the universal group $C^*$-algebra $C^*(\Gamma)$ (see \cite[Section 2.5]{BO}), or in $\ell^1(\Gamma)$. Moreover, $f$ is invertible in the group von Neumann algebra $\cN\Gamma$ if and only if $\Gamma$ is non-amenable.
\end{theorem}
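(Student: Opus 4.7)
The plan is to handle the $C^*(\Gamma)$ and $\ell^1(\Gamma)$ non-invertibility via the trivial representation, and to reduce the $\cN\Gamma$ statement to Kesten's characterization of amenability combined with a direct Følner-vector computation.

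First I would observe that the trivial character $\Gamma \to \{1\}$ extends, by the universal property of $C^*(\Gamma)$, to a unital $*$-homomorphism $\epsilon : C^*(\Gamma) \to \Cb$, and extends by absolute convergence to a continuous unital algebra homomorphism $\epsilon : \ell^1(\Gamma) \to \Cb$; in both cases $\epsilon$ sends $g$ to $\sum_s g_s$. Condition (1) in the definition of well-balanced reads $\sum_s f_s = 0$, so $\epsilon(f) = 0$. Since unital homomorphisms preserve invertibility, $f$ is not invertible in $C^*(\Gamma)$ or in $\ell^1(\Gamma)$.

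For the $\cN\Gamma$ statement, I would set $\mu = -(f - f_e)/f_e$, exactly as in Section~\ref{S-homoclinic}. The well-balanced hypothesis makes $\mu$ a symmetric probability measure on $\Gamma$ whose support generates $\Gamma$, and one has $f = f_e(1 - \mu)$ in $\Rb\Gamma \subset \cN\Gamma$, so invertibility of $f$ in $\cN\Gamma$ reduces to the assertion $1 \notin \spec_{\cN\Gamma}(\mu)$. If $\Gamma$ is non-amenable, Kesten's theorem gives $\|\mu\|_{\cN\Gamma} < 1$, so $\spec_{\cN\Gamma}(\mu) \subset (-1,1)$ and $1 - \mu$ is invertible via the Neumann series. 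If $\Gamma$ is amenable, I would take a right Følner sequence $\{F_n\}$ in $\Gamma$ and form the unit vectors $\xi_n = |F_n|^{-1/2}\chi_{F_n} \in \ell^2(\Gamma, \Cb)$; a direct computation using $\sum_s \mu_s = 1$ together with asymptotic right-invariance of $F_n$ under the finite support of $\mu$ shows $\|R_\mu \xi_n - \xi_n\| \to 0$, so $1$ is an approximate eigenvalue of $R_\mu$ and hence lies in $\spec_{\cN\Gamma}(\mu)$, showing that $1 - \mu$ is not invertible.

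The only real bookkeeping concern is a convention mismatch: the paper identifies $\Rb\Gamma$ with a subalgebra of $\cN\Gamma$ via $g \mapsto R_{g^*}$ (right convolution by the adjoint), whereas Kesten's theorem is most commonly stated in terms of the left regular representation. Since $\mu = \mu^*$ by symmetry, the image of $\mu$ is $R_\mu$, a self-adjoint operator unitarily equivalent to left convolution by $\mu$ (via the involution $\xi(s) \mapsto \xi(s^{-1})$), so Kesten's theorem and the Følner-vector calculation apply to $R_\mu$ with no modification. Beyond this, the argument is essentially routine once Kesten is invoked; the hard part is really hidden inside Kesten's theorem itself.
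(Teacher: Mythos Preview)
Your proposal is correct and follows essentially the same route as the paper: the trivial character kills $f$ in $C^*(\Gamma)$ and $\ell^1(\Gamma)$, Kesten's theorem handles the non-amenable case in $\cN\Gamma$, and a F{\o}lner-vector computation handles the amenable case. The only cosmetic difference is that for $\ell^1(\Gamma)$ the paper invokes the embedding $\ell^1(\Gamma)\hookrightarrow C^*(\Gamma)$ rather than constructing the augmentation map on $\ell^1(\Gamma)$ directly, and your remark about the $g\mapsto R_{g^*}$ convention is a nice clarification the paper leaves implicit.
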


\begin{proof}
We note first that $f$ is not invertible in
$C^*(\Gamma)$. Indeed, the trivial representation of $\Gamma$ gives rise to a unital $C^*$-algebra homomorphism $\varphi: C^*(\Gamma)\rightarrow \Cb$ sending every $s\in \Gamma$ to $1$. Since $\varphi(f)=0$ is not invertible in $\Cb$, $f$ is not invertible in $C^*(\Gamma)$.

Next we note that $f$ is not invertible in $\ell^1(\Gamma)$. This follows from the natural unital algebra embedding $\ell^1(\Gamma)\hookrightarrow C^*(\Gamma)$ being the identity map on $\Rb\Gamma$ and the non-invertibility of $f$ in $C^*(\Gamma)$.

Finally we note that $f$ is invertible in
$\cN\Gamma$ if and only if $\Gamma$ is non-amenable. Suppose that $\Gamma$ is amenable. Let $\{F_n\}_{n\in \Nb}$ be a right F{\o}lner sequence of $\Gamma$. Then $|F_n|^{-1/2}\chi_{F_n}$ is a unit vector in $\ell^2(\Gamma, \Cb)$ for each $n\in \Nb$, where $\chi_{F_n}$ denotes the characteristic function of $F_n$ in $\Gamma$,
and $\lim_{n\to \infty}\||F_n|^{-1/2}\chi_{F_n}\cdot f\|_2=0$. Thus $f$ is not invertible in $\cN\Gamma$. Now suppose that $\Gamma$ is non-amenable.
Set $\mu=-(f-f_e)/f_e\in \Rb\Gamma$ and denote by $\|\mu\|$ the operator norm of $\mu$ on $\ell^2(\Gamma, \Cb)$, which is also the norm of $\mu$ in $\cN\Gamma$.
Then $\mu$ is a symmetric probability measure on $\Gamma$ and the support of $\mu$ generates $\Gamma$.
Since $\Gamma$ is non-amenable,
Kesten's theorem \cite[Theorem 4.20.ii]{Paterson} implies that $\|\mu\|<1$. Therefore $f=f_e(1-\mu)$ is invertible in the Banach algebra $\cN\Gamma$.
\end{proof}


\end{document}